\newcommand {\emptycomment}[1]{} 
\newcommand{\nc}{\newcommand}
\newcommand{\delete}[1]{}
\nc{\mlabel}[1]{\label{#1}}  
\nc{\mcite}[1]{\cite{#1}}  
\nc{\mref}[1]{\ref{#1}}  
\nc{\meqref}[1]{\eqref{#1}} 
\nc{\mbibitem}[1]{\bibitem{#1}} 
\nc{\mlabel}[1]{\label{#1}  
{\hfill \hspace{1cm}{\bf{{\ }\hfill(#1)}}}}
\nc{\mcite}[1]{\cite{#1}{{\bf{{\ }(#1)}}}}  
\nc{\mref}[1]{\ref{#1}{{\bf{{\ }(#1)}}}}  
\nc{\meqref}[1]{\eqref{#1}{{\bf{{\ }(#1)}}}} 
\nc{\mbibitem}[1]{\bibitem[\bf #1]{#1}} 
\newtheorem{thm}{Theorem}[section]
\newtheorem{lem}[thm]{Lemma}
\newtheorem{cor}[thm]{Corollary}
\newtheorem{pro}[thm]{Proposition}
\theoremstyle{definition}
\newtheorem{defi}[thm]{Definition}
\newtheorem{ex}[thm]{Example}
\newtheorem{rmk}[thm]{Remark}
\nc{\tred}[1]{\textcolor{red}{#1}}
\nc{\tblue}[1]{\textcolor{blue}{#1}}
\nc{\tgreen}[1]{\textcolor{green}{#1}}
\nc{\tpurple}[1]{\textcolor{purple}{#1}}
\nc{\btred}[1]{\textcolor{red}{\bf #1}}
\nc{\btblue}[1]{\textcolor{blue}{\bf #1}}
\nc{\btgreen}[1]{\textcolor{green}{\bf #1}}
\nc{\btpurple}[1]{\textcolor{purple}{\bf #1}}
\nc{\rk}{\mathrm{r}}
\nc{\tforall}{\text{ for all }}
\nc{\svec}[2]{{\tiny\left(\begin{matrix}#1\\
#2\end{matrix}\right)\,}}  
\nc{\ssvec}[2]{{\tiny\left(\begin{matrix}#1\\
#2\end{matrix}\right)\,}} 
\nc{\typeI}{local cocycle $3$-Lie bialgebra\xspace}
\nc{\typeIs}{local cocycle $3$-Lie bialgebras\xspace}
\nc{\typeII}{double construction $3$-Lie bialgebra\xspace}
\nc{\typeIIs}{double construction $3$-Lie bialgebras\xspace}
\nc{\bia}{{$\mathcal{P}$-bimodule ${\bf k}$-algebra}\xspace}
\nc{\bias}{{$\mathcal{P}$-bimodule ${\bf k}$-algebras}\xspace}
\nc{\rmi}{{\mathrm{I}}}
\nc{\rmii}{{\mathrm{II}}}
\nc{\rmiii}{{\mathrm{III}}}
\nc{\OT}{constant $\theta$-}
\nc{\T}{$\theta$-}
\nc{\IT}{inverse $\theta$-}
\nc{\asi}{ASI\xspace}
\nc{\adm}{admissible\xspace}
\nc{\qadm}{$Q$-admissible\xspace}
\nc{\aybe}{AYBE\xspace}
\nc{\aybes}{AYBEs\xspace}
\nc{\admset}{\{\pm x\}\cup (-x+K^{\times}) \cup K^{\times} x^{-1}}
\nc{\dualrep}{gives a dual representation\xspace}
\nc{\admt}{admissible to\xspace}
\nc{\opa}{\cdot_A}
\nc{\opb}{\cdot_B}
\nc{\dfl}{\succ} \nc{\dfr}{\prec}
\nc{\disp}[1]{\displaystyle{#1}}
\nc{\bin}[2]{ (_{\stackrel{\scs{#1}}{\scs{#2}}})}  
\nc{\binc}[2]{ \left (\!\! \begin{array}{c} \scs{#1}\\
\scs{#2} \end{array}\!\! \right )}  
\nc{\bincc}[2]{  \left ( {\scs{#1} \atop
\vspace{-.5cm}\scs{#2}} \right )}  
\nc{\sarray}[2]{\begin{array}{c}#1 \vspace{.1cm}\\ \hline
\vspace{-.35cm} \\ #2 \end{array}}
\nc{\bs}{\bar{S}} \nc{\dcup}{\stackrel{\bullet}{\cup}}
\nc{\dbigcup}{\stackrel{\bullet}{\bigcup}} \nc{\etree}{\big |}
\nc{\la}{\longrightarrow} \nc{\fe}{\'{e}} \nc{\rar}{\rightarrow}
\nc{\dar}{\downarrow}
\nc{\ot}{\otimes} \nc{\sot}{{\scriptstyle{\ot}}}
\nc{\otm}{\overline{\ot}}
\nc{\ora}[1]{\stackrel{#1}{\rar}}
\nc{\ola}[1]{\stackrel{#1}{\la}}
\nc{\pltree}{\calt^\pl}
\nc{\epltree}{\calt^{\pl,\NC}}
\nc{\rbpltree}{\calt^r}
\nc{\scs}[1]{\scriptstyle{#1}} \nc{\mrm}[1]{{\rm #1}}
\nc{\dirlim}{\displaystyle{\lim_{\longrightarrow}}\,}
\nc{\invlim}{\displaystyle{\lim_{\longleftarrow}}\,}
\nc{\sha}{{\mbox{\cyr X}}}  
\nc{\ncsha}{{\mbox{\cyr X}^{\mathrm NC}}} \nc{\ncshao}{{\mbox{\cyr
X}^{\mathrm NC,\,0}}}
\nc{\shpr}{\diamond}    
\nc{\shprm}{\overline{\diamond}}    
\nc{\shpro}{\diamond^0}    
\nc{\shprr}{\diamond^r}     
\nc{\shpra}{\overline{\diamond}^r}
\nc{\shpru}{\check{\diamond}} \nc{\catpr}{\diamond_l}
\nc{\rcatpr}{\diamond_r} \nc{\lapr}{\diamond_a}
\nc{\sqcupm}{\ot}
\nc{\lepr}{\diamond_e} \nc{\vep}{\varedhlon} \nc{\labs}{\mid\!}
\nc{\rabs}{\!\mid} \nc{\hsha}{\widehat{\sha}}
\nc{\lsha}{\stackrel{\leftarrow}{\sha}}
\nc{\rsha}{\stackrel{\rightarrow}{\sha}} \nc{\lc}{\lfloor}
\nc{\rc}{\rfloor}
\nc{\tpr}{\sqcup}
\nc{\nctpr}{\vee}
\nc{\plpr}{\ob}
\nc{\rbplpr}{\bar{\plpr}}
\nc{\sqmon}[1]{\langle #1\rangle}
\nc{\forest}{\calf}
\nc{\altx}{\Lambda_X} \nc{\vecT}{\vec{T}} \nc{\onetree}{\bullet}
\nc{\Ao}{\check{A}}
\nc{\seta}{\underline{\Ao}}
\nc{\deltaa}{\overline{\delta}}
\nc{\trho}{\tilde{\rho}}
\nc{\rpr}{\rhd}
\nc{\dpr}{{\tiny\diamond}}
\nc{\rprpm}{{\rpr}}
\nc{\mmbox}[1]{\mbox{\ #1\ }} \nc{\ann}{\mrm{ann}}
\nc{\Aut}{\mrm{Aut}} \nc{\can}{\mrm{can}}
\nc{\twoalg}{{two-sided algebra}\xspace}
\nc{\colim}{\mrm{colim}}
\nc{\Cont}{\mrm{Cont}} \nc{\rchar}{\mrm{char}}
\nc{\cok}{\mrm{coker}} \nc{\dtf}{{R-{\rm tf}}} \nc{\dtor}{{R-{\rm
tor}}}
\nc{\depth}{{\mrm d}}
\nc{\Div}{{\mrm Div}} \nc{\End}{\mrm{End}} \nc{\Ext}{\mrm{Ext}}
\nc{\Fil}{\mrm{Fil}} \nc{\Frob}{\mrm{Frob}} \nc{\Gal}{\mrm{Gal}}
\nc{\GL}{\mrm{GL}} \nc{\Hom}{\mrm{Hom}} \nc{\hsr}{\mrm{H}}
\nc{\hpol}{\mrm{HP}} \nc{\id}{\mrm{id}} \nc{\im}{\mrm{im}}
\nc{\incl}{\mrm{incl}} \nc{\length}{\mrm{length}}
\nc{\LR}{\mrm{LR}} \nc{\mchar}{\rm char} \nc{\NC}{\mrm{NC}}
\nc{\mpart}{\mrm{part}} \nc{\pl}{\mrm{PL}}
\nc{\ql}{{\QQ_\ell}} \nc{\qp}{{\QQ_p}}
\nc{\rank}{\mrm{rank}} \nc{\rba}{\rm{RBA }} \nc{\rbas}{\rm{RBAs }}
\nc{\rbpl}{\mrm{RBPL}}
\nc{\rbw}{\rm{RBW }} \nc{\rbws}{\rm{RBWs }} \nc{\rcot}{\mrm{cot}}
\nc{\rest}{\rm{controlled}\xspace}
\nc{\rdef}{\mrm{def}} \nc{\rdiv}{{\rm div}} \nc{\rtf}{{\rm tf}}
\nc{\rtor}{{\rm tor}} \nc{\res}{\mrm{res}} \nc{\SL}{\mrm{SL}}
\nc{\Spec}{\mrm{Spec}} \nc{\tor}{\mrm{tor}} \nc{\Tr}{\mrm{Tr}}
\nc{\mtr}{\mrm{sk}}
\nc{\ab}{\mathbf{Ab}} \nc{\Alg}{\mathbf{Alg}}
\nc{\Algo}{\mathbf{Alg}^0} \nc{\Bax}{\mathbf{Bax}}
\nc{\Baxo}{\mathbf{Bax}^0} \nc{\RB}{\mathbf{RB}}
\nc{\RBo}{\mathbf{RB}^0} \nc{\BRB}{\mathbf{RB}}
\nc{\Dend}{\mathbf{DD}} \nc{\bfk}{{\bf k}} \nc{\bfone}{{\bf 1}}
\nc{\base}[1]{{a_{#1}}} \nc{\detail}{\marginpar{\bf More detail}
\noindent{\bf Need more detail!}
\svp}
\nc{\Diff}{\mathbf{Diff}} \nc{\gap}{\marginpar{\bf
Incomplete}\noindent{\bf Incomplete!!}
\svp}
\nc{\FMod}{\mathbf{FMod}} \nc{\mset}{\mathbf{MSet}}
\nc{\rb}{\mathrm{RB}} \nc{\Int}{\mathbf{Int}}
\nc{\Mon}{\mathbf{Mon}}
\nc{\remarks}{\noindent{\bf Remarks: }}
\nc{\OS}{\mathbf{OS}} 
\nc{\Rep}{\mathbf{Rep}}
\nc{\Rings}{\mathbf{Rings}} \nc{\Sets}{\mathbf{Sets}}
\nc{\DT}{\mathbf{DT}}
\nc{\BA}{{\mathbb A}} \nc{\CC}{{\mathbb C}} \nc{\DD}{{\mathbb D}}
\nc{\EE}{{\mathbb E}} \nc{\FF}{{\mathbb F}} \nc{\GG}{{\mathbb G}}
\nc{\HH}{{\mathbb H}} \nc{\LL}{{\mathbb L}} \nc{\NN}{{\mathbb N}}
\nc{\QQ}{{\mathbb Q}} \nc{\RR}{{\mathbb R}} \nc{\BS}{{\mathbb{S}}} \nc{\TT}{{\mathbb T}}
\nc{\VV}{{\mathbb V}} \nc{\ZZ}{{\mathbb Z}}
\nc{\calao}{{\mathcal A}} \nc{\cala}{{\mathcal A}}
\nc{\calc}{{\mathcal C}} \nc{\cald}{{\mathcal D}}
\nc{\cale}{{\mathcal E}} \nc{\calf}{{\mathcal F}}
\nc{\calfr}{{{\mathcal F}^{\,r}}} \nc{\calfo}{{\mathcal F}^0}
\nc{\calfro}{{\mathcal F}^{\,r,0}} \nc{\oF}{\overline{F}}
\nc{\calg}{{\mathcal G}} \nc{\calh}{{\mathcal H}}
\nc{\cali}{{\mathcal I}} \nc{\calj}{{\mathcal J}}
\nc{\call}{{\mathcal L}} \nc{\calm}{{\mathcal M}}
\nc{\caln}{{\mathcal N}} \nc{\calo}{{\mathcal O}}
\nc{\calp}{{\mathcal P}} \nc{\calq}{{\mathcal Q}} \nc{\calr}{{\mathcal R}}
\nc{\calt}{{\mathcal T}} \nc{\caltr}{{\mathcal T}^{\,r}}
\nc{\calu}{{\mathcal U}} \nc{\calv}{{\mathcal V}}
\nc{\calw}{{\mathcal W}} \nc{\calx}{{\mathcal X}}
\nc{\CA}{\mathcal{A}}
\nc{\fraka}{{\mathfrak a}} \nc{\frakB}{{\mathfrak B}}
\nc{\frakb}{{\mathfrak b}} \nc{\frakd}{{\mathfrak d}}
\nc{\oD}{\overline{D}}
\nc{\frakF}{{\mathfrak F}} \nc{\frakg}{{\mathfrak g}}
\nc{\frakm}{{\mathfrak m}} \nc{\frakM}{{\mathfrak M}}
\nc{\frakMo}{{\mathfrak M}^0} \nc{\frakp}{{\mathfrak p}}
\nc{\frakS}{{\mathfrak S}} \nc{\frakSo}{{\mathfrak S}^0}
\nc{\fraks}{{\mathfrak s}} \nc{\os}{\overline{\fraks}}
\nc{\frakT}{{\mathfrak T}}
\nc{\oT}{\overline{T}}
\nc{\frakX}{{\mathfrak X}} \nc{\frakXo}{{\mathfrak X}^0}
\nc{\frakx}{{\mathbf x}}
\nc{\frakTx}{\frakT}      
\nc{\frakTa}{\frakT^a}        
\nc{\frakTxo}{\frakTx^0}   
\nc{\caltao}{\calt^{a,0}}   
\nc{\ox}{\overline{\frakx}} \nc{\fraky}{{\mathfrak y}}
\nc{\frakz}{{\mathfrak z}} \nc{\oX}{\overline{X}}
\font\cyr=wncyr10
\nc{\al}{\alpha}
\nc{\lam}{\lambda}
\nc{\lr}{\longrightarrow}
\nc{\pr}{\prec}
\nc{\su}{\succ}
\nc{\prp}{\prec_P}
\nc{\scp}{\succ_P}
\nc{\pra}{\prec_A}
\nc{\sca}{\succ_A}
\nc{\prb}{\prec_B}
\nc{\scb}{\succ_B}
\nc{\nea}{\nearrow} 
\nc{\se}{\searrow}
\nc{\sw}{\swarrow}
\nc{\nw}{\nwarrow}
\nc{\nep}{\ne^{op}}
\nc{\sep}{\se^{op}}
\nc{\swp}{\sw^{op}}
\nc{\nwp}{\nw^{op}}
\nc{\east}{\succ}
\nc{\west}{\prec}
\nc{\north}{\wedge}
\nc{\south}{\vee}
\nc{\eastt}{\east^t}
\nc{\westt}{\west^t}
\nc{\northt}{\north^t}
\nc{\ob}{{\ \begin{picture}(-1,1)(-1,-3)\circle*{3}\end{picture}\ \,}}
\nc{\dftimes}{\blacksquare}
\nc{\dasib}{differential antisymmetric infinitesimal bialgebra\xspace}
\nc{\dasibs}{differential antisymmetric infinitesimal bialgebra\xspace}
\nc{\Dasib}{Differential antisymmetric infinitesimal bialgebra\xspace}
\nc{\Dasibs}{Differential antisymmetric infinitesimal bialgebras\xspace}
\nc{\dASIb}{differential ASI bialgebra\xspace}
\nc{\DASIb}{Differential ASI bialgebra\xspace}
\nc{\dASIbs}{differential ASI bialgebras\xspace}
\nc{\DASIbs}{Differential ASI bialgebras\xspace}
\nc{\asib}{antisymmetric infinitesimal bialgebra\xspace}
\nc{\ASIb}{ASI bialgebra\xspace}
\nc{\asibs}{antisymmetric infinitesimal bialgebras\xspace}
\nc{\ASIbs}{ASI bialgebras\xspace}
\nc{\da}{differential algebra\xspace}
\nc{\das}{differential algebras\xspace}
\nc{\dca}{differential coalgebra\xspace}
\nc{\dcas}{differential coalgebras\xspace}
\nc{\nva}{noncommutative Novikov algebra\xspace}
\nc{\nvas}{noncommutative Novikov algebras\xspace}
\nc{\nvbia}{noncommutative Novikov bialgebra\xspace}
\nc{\nvbias}{noncommutative Novikov bialgebras\xspace}
\nc{\nvca}{noncocommutative Novikov coalgebra\xspace}
\nc{\nvcas}{noncocommutative Novikov coalgebras\xspace}
\nc{\pnvas}{noncommutative pre-Novikov algebras\xspace}
\nc{\pnva}{noncommutative pre-Novikov algebra\xspace}
\nc{\pnvca}{noncocommutative pre-Novikov coalgebra\xspace}
\nc{\lsa}{\ell_{\prec_A}}
\nc{\rsa}{r_{\prec_A}}
\nc{\lusa}{\ell_{{\succ_A}}}
\nc{\rusa}{r_{{\succ_A}}}
\nc{\lsb}{\ell_{\prec_B}}
\nc{\rsb}{r_{\prec_B}}
\nc{\lusb}{\ell_{{\succ_B}}}
\nc{\rusb}{r_{{\succ_B}}}
\nc{\at}{\cdot_d}
\nc{\wt}{of weight $\lambda$\xspace}
\nc{\pa}{\partial}
\nc{\vsa}{\vspace{-.1cm}}
\nc{\vsb}{\vspace{-.2cm}}
\nc{\vsc}{\vspace{-.3cm}}
\nc{\vsd}{\vspace{-.4cm}}
\nc{\vse}{\vspace{-.5cm}}
\nc{\sh}[1]{\textcolor{purple}{Shanghua:#1}}
\nc{\yz}[1]{\textcolor{green}{Yizhen:#1}}
\nc{\li}[1]{\textcolor{red}{#1}}
\nc{\lir}[1]{\textcolor{blue}{Li:#1}}
\begin{document}
	
\title[A bialgebra theory for weighted differential algebras]{
Noncommutative Novikov bialgebras and differential antisymmetric infinitesimal bialgebras with weight}

\author{Shanghua Zheng}
\address{School of Mathematics and Statistics, Jiangxi Normal University, Nanchang, Jiangxi 330022, China}
\email{zhengsh@jxnu.edu.cn}

\author{Yizhen Li}
\address{School of Mathematics and Statistics, Jiangxi Normal University, Nanchang, Jiangxi 330022, China}
\email{liyz@jxnu.edu.cn}

\author{Liushuting Yang}
\address{School of Mathematics and Statistics, Jiangxi Normal University, Nanchang, Jiangxi 330022, China}
\email{liaostyl@jxnu.edu.cn}

\author{Li Guo}
\address{Department of Mathematics and Computer Science,
	Rutgers University,
	Newark, NJ 07102, USA}
\email{liguo@rutgers.edu}

\begin{abstract}
This paper first develops a bialgebra theory for a noncommutative Novikov algebra, called a noncommutative Novikov bialgebra, which is further characterized by matched pairs and Manin triples of noncommutative Novikov algebras. The  classical Yang-Baxter type equation, $\mathcal{O}$-operators, and noncommutative pre-Novikov algebras are introduced to study noncommutative Novikov bialgebra. As an application,
noncommutative pre-Novikov algebras are obtained from  differential dendriform algebras. Next, to generalize Gelfand's classical construction of a Novikov algebra from a commutative differential algebra to the bialgebra context in the noncommutative case, we establish antisymmetric infinitesimal (ASI) bialgebras for (noncommutative) differential algebras, and obtain the condition under which a differential ASI bialgebra induces a noncommutative Novikov bialgebra.
\end{abstract}

\makeatletter
\@namedef{subjclassname@2020}{\textup{2020} Mathematics Subject Classification}
\makeatother
\subjclass[2020]{
	16T10, 
	12H05, 
	17B38, 
	17D25, 
	16T25,	
	16T15, 
	17A36, 
	17A30. 
}

\keywords{Novikov algebra, differential algebra, antisymmetric infinitesimal bialgebra, Yang-Baxter equation, dendriform algebra}

\maketitle

\vspace{-1cm}

\tableofcontents

\vspace{-1cm}

\allowdisplaybreaks

\section{Introduction}

This paper establishes bialgebra theories for noncommutative Novikov algebras and noncommutative differential algebras, and extends S. Gelfand's classical relation between commutative differential  algebras  and Novikov algebras to the context of noncommutative bialgebras. 

\subsection{Novikov algebras and  differential  algebras}
 A {\bf Novikov algebra} is a vector space $A$ with a binary operation $\circ$ satisfying
\begin{eqnarray}
(x \circ y) \circ z -x \circ (y\circ z) = (y\circ x)\circ z-y\circ (x \circ z),\mlabel{eq:na1}\\
 (x \circ y) \circ z = (x \circ z)\circ y,\quad x,y,z\in A. \mlabel{eq:na2}
\end{eqnarray}
The classical example of a Novikov algebra was given by S. Gelfand~\cite{GD79} from commutative differential  algebras.  
Given a commutative  differential algebra $(A,\cdot, \partial)$ consisting of  a   commutative  algebra $(A,\cdot)$ and a derivation $\pa$ on $A$, 
the binary operation
\begin{equation} \mlabel{eq:xcy}
x\circ y :=x\cdot \partial(y), \quad x,y\in A,
\end{equation}
defines a Novikov algebra $(A,\circ)$. It was shown that each Novikov algebra can be embedded 
into a commutative differential algebra \cite{DL02}. Furthermore,  Eqs.~\meqref{eq:na1}--\meqref{eq:na2} are shown to span all identities that can hold for all commutative  differential algebras via Eq.~\meqref{eq:xcy}~\cite{BCZ17}. 
Novikov algebras were first introduced from
Hamiltonian operators in the formal variational calculus~\cite{BN85} and later appeared in the study of Poisson brackets of hydrodynamic type~\cite{GD79}. There have been numerous studies of Novikov algebras with broad applications to areas such as pre-Lie algebras and Lie conformal algebras~\mcite{BM01,BM02,BCZ17,DL02,K1,KMS24,KN24,KSO19,LH24,LW25,PB11,ZC07,ZGM24}. 
Recently, multi-Novikov algebras arose from regularity structure of stochastic PDEs~\mcite{BD,BHZ}. Novikov type structures on groups and Lie algebras have also been explored~\mcite{GGHZ24}. 

On the other hand, in order to generalize the Gelfand's classical construction of a Novikov algebra from a commutative differential algebra to the noncommutative case, the notion of a \nva was introduced by Sartayev and Kolesnikov in~\cite{SK23}.
\begin{defi}\mlabel{defi:nva}
A {\bf \nva} is a vector space $A$ equipped with binary operations $\succ, \prec$ satisfying the following conditions:
\begin{align}
(x\succ y)\prec z&= x\succ (y\prec z),\mlabel{eq:nv1}\\
(x\prec y)\ob z&= x\ob (y\succ z),\quad x,y,z\in A,\mlabel{eq:nv2}
\end{align}
where $\ob$ is the abbreviation
\begin{equation}
\ob:=\prec+\succ.
\mlabel{eq:ob}
\end{equation}
\end{defi}
In analog to Gelfand's construction, a differential algebra induces a \nva, a property that can be tracked back to the paper of Loday~\cite{Lo1}. 
The latest progress in \nvas  was the construction of monomial basis of free \nvas on a set~\cite{DS25}. 

\subsection{Novikov bialgebras and differential antisymmetric  infinitesimal  bialgebras}
A recent study~\cite{HBG23} introduced the notion of a Novikov bialgebra to construct infinite-dimensional Lie bialgebras via the Novikov bialgebra affinization, lifting the Balinsky-Novikov construction of infinite-dimensional Lie algebras via the Novikov algebra affinization \cite{BN85}.
It was shown that a Novikov bialgebra was simultaneously characterized by a Manin triple of Novikov algebras and a matched pair of Novikov algebras. Furthermore, the concept of a Novikov Yang-Baxter equation was proposed to construct Novikov bialgebras.

On the other hand, in order to  generalize the study of antisymmetric  infinitesimal bialgebras (\asi) to the
 context of differential algebras, the concept of a differential \asi  bialgebra was introduced in \cite{LLB23}.  Moreover, it was proved that Poisson bialgebras can be constructed
 from commutative and cocommutative differential ASI bialgebras with two derivations and two coderivations, 
generalizing the typical construction of Poisson algebras from commutative differential algebras with two  derivations.

Most recently, the relations between   differential \asi bialgebras and Novikov bialgebras were established in \cite{HBG24}.  
It was shown that under some additional conditions,  every  commutative and cocommutative differential ASI bialgebra derived a Novikov bialgebra.  
Inspired by this derivation, this paper also takes into account the derived structures of differential \asi bialgebras. 

\vsd

\subsection{Bialgebras for noncommutative Novikov algebras and differential algebras}
The purpose of this paper is to develop a bialgebra theory for both a noncommutative Novikov algebra and a differential algebra by the Manin triple approach, and lift the above-mentioned connection between differential algebras and noncommutative Novikov algbras to the level of bialgebras. 

Similar to Novikov bialgebras~\mcite{HBG24}, we treat the noncommutative case by first characterizing it by Manin triples and matched pairs of noncommutative Novikov algebras. Antisymmetric solutions of Novikov
Yang-Baxter equations (NYBEs) in noncommutative Novikov algebras naturally leads to noncommutative Novikov bialgebras. In turn, such solutions
are provided by $\mathcal{O}$-operators of noncommutative Novikov algebras, which can further be obtained from noncommutative pre-Novikov algebras. In addition,  quasi-Frobenius noncommutative Novikov algebras
provide nondegenerate solutions of the NYBE. Furthermore, a bialgebra theory of differential algebras of weight $\lambda$,  including their characterization by double constructions of differential Frobenius algebras of weight $\lambda$ and matched pairs of differential algebras of weight $\lambda$, are established. We show that under suitable conditions, every differential \asi bialgebra of weight $0$ induces a  noncommutative Novikov bialgebra.
In summary, these connections are depicted in the following diagram.
\vsa
$$\footnotesize{
\xymatrix{
\txt{quasi-Frobenius noncommutative\\ Novikov algebras} &\txt{Manin triples of noncommutative\\ Novikov algebras}&\txt{double construction of differential\\ Frobenius algebras of weight $\lambda$}\\
 \ar@2{<->}^{\text{Prop.}~\ref{pro:4.6}}[u]_{\text{nondegenerate}}
\txt{antisymmetric \\solutions of NYBEs}
\ar@2{->}^{\text{Prop.}~\ref{pro:antieybe}}[r]\ar@2{<->}^{\text{Prop.}~\ref{pro:oelda}}[d]&\txt{noncommutative \\ Novikov bialgebras}    \ar@2{<->}_{\text{Thm.}~\ref{thm:mpmt}}[d] \ar@2{<->}^{\text{Thm.}~\ref{thm:mpmt}}[u]&\txt{differential \asi \\ bialgebras of weight $\lambda$} \ar@2{->}_{\text{Thm.}~\ref{thm:main}}[l]^{\text{$\lambda=0$}} \ar@2{<->}_{\text{Cor.}~\ref{cor:3.13}}[u]
\ar@2{<->}^{\text{Thm.}~\ref{thm:3.5}}[d]
\\
\txt{$\mathcal{O}$-operators of noncommutative\\ Novikov algebras} &\txt{matched pairs of noncommutative\\ Novikov algebras}&\txt{matched pairs of\\ \das of weight $\lambda$}
\\
\txt{noncommutative \\ pre-Novikov algebras}
\ar@2{->}^{\text{Prop.}~\ref{pro:id}}[u]
&&}}
$$

Here is the layout of the paper. 
Section~\mref{sec:nbialg} gives the notion of a noncommutative Novikov bialgebra, and its characterizations  by a matched pair and by a Manin triple of noncommutative Novikov algebras.

To construct noncommutative Novikov bialgebras, Section~\mref{sec:NYBE} introduces the notion of the NYBE in noncommutative Novikov algebras. $\mathcal{O}$-operators of noncommutative Novikov algebras are proposed to give antisymmetric solutions of the NYBE in the semi-product noncommutative Novikov algebras.

Section~\mref{sec:quasiFor} gives the notion of a \pnva.  We establish relations between  $\mathcal{O}$-operators of \nvas   and  \pnvas.  We then prove that quasi-Frobenius noncommutative Novikov algebras are equivalent to  antisymmetric nondegenerate solutions of the NYBE.

To generalize differential \asi bialgebras given by \cite{LLB23} to the arbitrary weight case, Section~\mref{sec:DASI}  presents  the notion of a differential \asi bialgebras of weight $\lambda$, as well as examples of differential \asi bialgebras of weight $\lambda$. 
Furthermore, differential \asi bialgebras \wt are  equivalent to a  matched pair of differential algebras \wt and a double construction of  differential Frobenius algebra \wt. In the special case of $\lambda=0$,  every differential \asi bialgebra derives a \nvbia under compatibility conditions.

\noindent
{\bf Notations: }
\begin{enumerate}
\item Throughout this paper, we fix a field $\bfk$ of characteristic $0$. All vector spaces, tensor products, and
linear homomorphisms are over $\bfk$. All the vector spaces and algebras are finite dimensional,  although some notions and statements still hold in the infinite-dimensional case.
By an algebra, we mean an associative algebra that is not necessarily commutative or unitary.
\item We denote by $V^*$  the dual space of a vector space $V$. The notation $\langle\,,\, \rangle : V^*\times V\to  \bfk$ given by
	$\,\langle u^*, v \rangle :=u^*(v), \,u^*\in V^*, v\in V, $
	denotes the  natural pair between $V^*$ and $V$. For a linear map $\psi :  V\to  W$, the linear dual of $\psi$ is defined by
	\begin{equation}
		\psi^*: W^*\to  V^*,\quad\langle \psi^*(w^*), v \rangle: =\langle w^*, \psi(v) \rangle , \quad v\in V, w^*\in W^*.
		\mlabel{eq:phis}
	\end{equation}
\item Let $A$ and $V$ be vector spaces and $\rho:A\rightarrow\mathrm{End}(V)$ be a linear map. We define the linear map $\rho^{*}:A\rightarrow\mathrm{End}(V^{*})$ by
\begin{equation*}
	\langle \rho^{*}(x)u^{*},v\rangle=\langle \big( \rho(x) \big)^{*}u^{*},v\rangle=\langle u^{*},\rho(x)v\rangle,\quad x\in A, u^{*}\in V^{*},v\in V.
\end{equation*}
\item For a vector space $A$ with a binary operation $\circ:A\otimes A\rightarrow A$, we denote $L_{\circ},R_{\circ}:A\rightarrow A$ by
\begin{equation*}
L_{\circ}(x)y=x\circ y=R_{\circ}(y)x,\quad x,y\in A.
\end{equation*}
Furthermore,  denote by $\sigma$ the flip linear map on $A\ot A$, that is, $\sigma (x\ot y)=y\ot x$.
\end{enumerate}

\section{Noncommutative Novikov bialgebras}
\mlabel{sec:nbialg}
This section first defines the representation of a \nva, which can be equivalently characterized by the semi-product noncommutative Novikov algebra.  It then introduces the notions of a \nvbia, a matched pair of noncommutative Novikov algebras and a Manin triple of noncommutative Novikov algebras. The latter two are used to characterize  \nvbias.

\subsection{Representations and matched pairs of noncommutative Novikov algebras}
We give the notion of a representation of noncommutative Novikov algebras.
\begin{defi}\mlabel{defi:5.3}
Let $A:=(A,\prec,\succ)$ be a \nva. A {\bf representation} of $A$ is a quintuple $(V, \ell_\prec, r_\prec, \ell_\succ, r_\succ)$ where $V$ is a vector space and $\ell_\prec, r_\prec, \ell_\succ, r_\succ: A\to \End(V)$ are linear maps satisfying the following identities. 
\begin{align}
\ell_\prec(x\succ y)&=\ell_\succ(x)\ell_\prec(y),\mlabel{eq:r1}\\
r_\prec(x)\ell_\succ(y)&=\ell_\succ(y)r_\prec(x),\mlabel{eq:r2}\\
r_\prec(x)r_\succ(y)&=r_\succ (y\prec x),\mlabel{eq:r3}\\
\ell_\ob(x\prec y)&=\ell_\ob(x)\ell_\succ(y),\mlabel{eq:r4}\\
r_\ob(x)\ell_\prec(y)&=\ell_\ob(y)r_\succ(x),\mlabel{eq:r5}\\
r_\ob(x)r_\prec(y)&=r_\ob(y\succ x),\quad x,y\in A.\mlabel{eq:r6}
\end{align}
\end{defi}

\begin{pro}\mlabel{pro:dg}Let  $A:=(A,\prec,\succ)$ be a \nva. If $(V,\ell_\prec, r_\prec,\ell_\succ, r_\succ)$ is a representation of $A$, then $(V^*,-r^*_\ob, \ell^*_\succ, r^*_\prec, -\ell^*_\ob)$ is also a representation of $A$.
\end{pro}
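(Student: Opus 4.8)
The plan is to verify directly that the quintuple $(V^*,-r^*_\ob,\ell^*_\succ,r^*_\prec,-\ell^*_\ob)$ satisfies the six defining identities \meqref{eq:r1}--\meqref{eq:r6} of a representation, using the hypothesis that $(V,\ell_\prec,r_\prec,\ell_\succ,r_\succ)$ satisfies them. Writing the candidate maps as $\widetilde\ell_\prec:=-r^*_\ob$, $\widetilde r_\prec:=\ell^*_\succ$, $\widetilde\ell_\succ:=r^*_\prec$, $\widetilde r_\succ:=-\ell^*_\ob$, the key observation is that dualizing reverses the order of composition, i.e.\ $(fg)^*=g^*f^*$, and turns a ``left'' identity into a ``right'' one and vice versa. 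So each of the six identities to be checked for the dual will correspond, after transposition, to one of the six original identities — but with the roles of $\ell$'s and $r$'s partly interchanged. I would first note that $\ob=\prec+\succ$ gives $\ell_\ob=\ell_\prec+\ell_\succ$ and $r_\ob=r_\prec+r_\succ$, and that consequently from the original six identities one can derive the ``$\ob$-companion'' identities obtained by summing appropriate pairs (for instance, adding \meqref{eq:r1} to the suitable consequence yields a formula for $\ell_\ob(x\succ y)$, etc.). Having these companion identities available at the level of $V$ is what makes the dual computation close.

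Concretely, I would proceed identity by identity. To check the dual version of \meqref{eq:r1}, namely $\widetilde\ell_\prec(x\succ y)=\widetilde\ell_\succ(x)\widetilde\ell_\prec(y)$, i.e.\ $-r^*_\ob(x\succ y)=r^*_\prec(x)\big(-r^*_\ob(y)\big)$, I transpose to get the equivalent statement $r_\ob(x\succ y)=r_\ob(y)r_\prec(x)$ on $V$, which is exactly \meqref{eq:r6} with the arguments renamed. Similarly, the dual of \meqref{eq:r3}, $\widetilde r_\prec(x)\widetilde r_\succ(y)=\widetilde r_\succ(y\prec x)$, transposes to $\ell_\ob(y\prec x)=\ell_\ob(x)\ell_\succ(y)$ wait — I would match it against \meqref{eq:r4}; the dual of \meqref{eq:r4} transposes to \meqref{eq:r1}; the dual of \meqref{eq:r6} transposes to \meqref{eq:r3}; the dual of \meqref{eq:r2} transposes to a statement of the form $\ell_\succ(x)r_\prec(y)=r_\prec(y)\ell_\succ(x)$, which is \meqref{eq:r2} itself; and the dual of \meqref{eq:r5} transposes to a relation between $\ell_\prec$, $r_\succ$, $r_\ob$, $\ell_\ob$ that should follow from \meqref{eq:r5} together with the $\ob$-additivity. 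The remaining check — the dual of \meqref{eq:r2}, relating $\widetilde r_\prec=\ell^*_\succ$ and $\widetilde\ell_\succ=r^*_\prec$ — dualizes to the commutation $r_\prec(x)\ell_\succ(y)=\ell_\succ(y)r_\prec(x)$, i.e.\ \meqref{eq:r2} again. So the correspondence is an involution on the set of six identities (roughly $\meqref{eq:r1}\leftrightarrow\meqref{eq:r4}$, $\meqref{eq:r3}\leftrightarrow\meqref{eq:r6}$, $\meqref{eq:r2}\leftrightarrow\meqref{eq:r2}$, $\meqref{eq:r5}\leftrightarrow\meqref{eq:r5}$), which is a satisfying structural reason the statement is true.

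The one genuine subtlety — and the step I expect to be the main obstacle — is bookkeeping the signs and the $\ob$-decompositions correctly in the identities \meqref{eq:r4}, \meqref{eq:r5}, \meqref{eq:r6} that involve $\ell_\ob$ or $r_\ob$ on the outside: when dualized these produce terms $r^*_\ob$ or $\ell^*_\ob$ that must be re-expanded as $r^*_\prec+r^*_\succ$ or $\ell^*_\prec+\ell^*_\succ$, and one must be careful that the minus signs in $\widetilde\ell_\prec=-r^*_\ob$ and $\widetilde r_\succ=-\ell^*_\ob$ propagate consistently through both sides. I would handle this by, for each of the six target identities, (i) writing out both sides in terms of the tilde-maps, (ii) substituting the definitions and pulling all stars outside via $(fg)^*=g^*f^*$, (iii) transposing the resulting operator identity on $V^*$ back to an operator identity on $V$ by applying $\langle-,v\rangle$, and (iv) recognizing the outcome as one of \meqref{eq:r1}--\meqref{eq:r6} (possibly after adding/subtracting an already-verified identity to account for $\ell_\ob=\ell_\prec+\ell_\succ$). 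Since all operations are linear and the paper works over a field of characteristic $0$ with finite-dimensional spaces (so $V^{**}\cong V$ canonically), no convergence or reflexivity issues arise, and the proof reduces to this finite, mechanical verification.
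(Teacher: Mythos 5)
Your approach is exactly the paper's: the paper verifies identity \meqref{eq:r1} for $V^*$ by transposing against the natural pairing and recognizing \meqref{eq:r6} for $V$, then declares the remaining five checks similar --- which is precisely your plan, and your described mechanical procedure (substitute, pull stars out via $(fg)^*=g^*f^*$, transpose, match) does close all six cases. One small correction to your concluding parenthetical: the involution induced by dualization is $\meqref{eq:r1}\leftrightarrow\meqref{eq:r6}$ and $\meqref{eq:r3}\leftrightarrow\meqref{eq:r4}$ (with \meqref{eq:r2} and \meqref{eq:r5} self-paired), not $\meqref{eq:r1}\leftrightarrow\meqref{eq:r4}$ and $\meqref{eq:r3}\leftrightarrow\meqref{eq:r6}$ as you wrote there; your own explicit computations earlier in the proposal (dual of \meqref{eq:r1} reduces to \meqref{eq:r6}, dual of \meqref{eq:r3} to \meqref{eq:r4}) already give the correct pairing.
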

\begin{proof}For all $x,y,z\in A$ and $a^*\in A^*$, we have
\begin{eqnarray*}
\langle-r^*_\ob(x\succ y)a^*,z\rangle=\langle a^*, -r_\ob (x\succ y)z\rangle,\quad\langle r^*_\prec (x)(-r^*_\ob (y))a^*,z \rangle=\langle r^*_\prec (x)(-r^*_\ob (y))a^*,(-r_\ob(y)r_\prec(x))z \rangle.
\end{eqnarray*}
Then by Eq.~\meqref{eq:r6} for the representation $V$, we get $-r^*_\ob(x\succ y)=r^*_\prec (x)(-r^*_\ob (y))$. This verifies the identity \meqref{eq:r1} for the representation $V^*$. The other identities can be verified in the similar way.
\end{proof}

\begin{ex} \mlabel{ex:ap}
Let $A:=(A,\prec,\succ)$ be a \nva.  Then $(A, L_\prec, R_\prec, L_\succ, R_\succ)$ is a representation of $A$, called the {\bf adjoint representation} of $A$.
Moreover, $(A^*, -R^*_\ob, L^*_\succ, R^*_\prec, -L^*_\ob)$ is also a representation of $A$, called the {\bf coadjoint representation} of $A$.
\end{ex}

\begin{defi}	\mlabel{defi:mp}
Let $A:=(A,\pra,\sca)$ and $B:=(B,\prb,\scb)$ be \nvas.
Let $\lsa$,$\rsa$,$\lusa$,$\rusa:A\to \End(B)$ and $\lsb,\rsb,\lusb,\rusb: B\to \End(A)$ be linear maps. Define  binary operations $\prec,\succ$ on the direct sum $A\oplus B$ by
\begin{eqnarray}
(x+a)\prec_{A\bowtie B} (y+b):=(x\prec_A y+\rsb(b)x+\lsb(a)y)+(a\prec_B b+\lsa(x)b+\rsa(y)a),\mlabel{eq:do1}\\
(x+a)\succ_{A\bowtie B} (y+b):=(x\sca y+\rusb(b)x+\lusb(a)y)+(a\scb b+\lusa(x)b+\rusa(y)a),  \mlabel{eq:do2}
\end{eqnarray}
for all $x,y\in A$ and $ a,b\in B$. If $(A\oplus B, \prec_{A\bowtie B},\succ_{A\bowtie B})$ is  a \nva, denoted by $A\bowtie_{\lsb,\lsa,\lusb,\lusa}^{\rsb,\rsa,\rusb,\rusa} B$ or simply $A\bowtie B$,
		then the decuple
		$$(A,B,\lsa,\rsa,\lusa,\rusa,\lsb,\rsb,\lusb,\rusb)$$
		is called a {\bf matched pair of \nvas}.
\end{defi}
\begin{pro}\mlabel{pro:ld-rep}
Let $A:=(A,\prec,\su)$ be a \nva and let $V$ be a vector space. For linear maps $\ell_{\prec},r_{\prec},\ell_{\su}, r_{\su}: A\to \End(V)$, define  binary operations $\prec',\su'$ on $A\oplus V$ by
	\begin{align*}
		&(x+u)\prec'(y+v):=x\prec y+\ell_{\prec}(x)v+r_{\prec}(y)u,\\
	&(x+u)\su'(y+v):=x\su y+\ell_{\su}(x)v+r_{\su}(y)u,\quad x,y\in A, u,v\in V.
	\end{align*}
Then $(A\oplus V,\prec',\su')$ is  a \nva if and only if
$(V,\ell_{\prec},r_{\prec},\ell_{\su}, r_{\su})$ is a  representation of $(A,\prec,\su)$.
In this case,  $(A\oplus V, \prec',\su')$ is called the {\bf semi-direct product  \nva} of $(A,\prec,\su)$,  denoted  by $(A\ltimes_{r_{\prec},r_{\su}}^{\ell_{\prec},\ell_{\su}}V,\prec',\su')$.
\end{pro}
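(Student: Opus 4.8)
The plan is to substitute generic elements $x+u$, $y+v$, $z+w$ (with $x,y,z\in A$ and $u,v,w\in V$) into the two defining identities \eqref{eq:nv1} and \eqref{eq:nv2} of a \nva, applied to the pair $(A\oplus V,\prec',\su')$, and to check that after separating the $A$-component and the $V$-component of each resulting equation one recovers exactly the identities \eqref{eq:nv1}--\eqref{eq:nv2} for $A$ (which hold by hypothesis) together with the six representation axioms \eqref{eq:r1}--\eqref{eq:r6}. Two observations make the computation short. First, putting $x=y=0$ in the defining formulas gives $u\prec'v=u\su'v=0$ for $u,v\in V$, so $V$ is a square-zero ideal of $(A\oplus V,\prec',\su')$; hence in any triple product the $V$-part of the output is linear separately in $u$, in $v$, and in $w$, with at most one of them occurring in each monomial. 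Second, since $\ob':=\prec'+\su'$, the left and right multiplication operators of $\ob'$ acting on $V$ are $\ell_\prec+\ell_\succ=\ell_\ob$ and $r_\prec+r_\succ=r_\ob$, and the restriction of $\ob'$ to $A$ is the operation $\ob$ of $A$.

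First I would expand both sides of \eqref{eq:nv1}, that is $\big((x+u)\su'(y+v)\big)\prec'(z+w)$ and $(x+u)\su'\big((y+v)\prec'(z+w)\big)$. Their $A$-components are $(x\succ y)\prec z$ and $x\succ(y\prec z)$, which coincide because $A$ is a \nva. The $V$-component of the difference is a sum of a $u$-term, a $v$-term, and a $w$-term; requiring each to vanish for all arguments gives $r_\prec(z)r_\succ(y)=r_\succ(y\prec z)$, $r_\prec(z)\ell_\succ(x)=\ell_\succ(x)r_\prec(z)$, and $\ell_\prec(x\succ y)=\ell_\succ(x)\ell_\prec(y)$, which after relabeling the free variables are precisely \eqref{eq:r3}, \eqref{eq:r2}, and \eqref{eq:r1}. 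Likewise, expanding both sides of \eqref{eq:nv2}, namely $\big((x+u)\prec'(y+v)\big)\ob'(z+w)$ and $(x+u)\ob'\big((y+v)\su'(z+w)\big)$, the $A$-components give $(x\prec y)\ob z=x\ob(y\succ z)$, which is \eqref{eq:nv2} for $A$, while the $u$-, $v$- and $w$-parts of the $V$-component yield $r_\ob(z)r_\prec(y)=r_\ob(y\succ z)$, $r_\ob(z)\ell_\prec(x)=\ell_\ob(x)r_\succ(z)$, and $\ell_\ob(x\prec y)=\ell_\ob(x)\ell_\succ(y)$, i.e. \eqref{eq:r6}, \eqref{eq:r5}, and \eqref{eq:r4}.

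Since each of these steps is an equivalence — the $A$-components hold unconditionally, and, $V$ and its elements being arbitrary, the $V$-components vanish identically if and only if the corresponding operator identities hold — this establishes both implications simultaneously, and the term ``semi-direct product \nva'' is then just a name attached to this construction. The argument involves no genuine difficulty; the only point needing care, the ``main obstacle'' such as it is, is the bookkeeping: tracking the three $V$-monomials in each triple product, matching the six resulting operator equations to \eqref{eq:r1}--\eqref{eq:r6} under the right relabelings of $x,y,z$, and consistently replacing $\ell_{\ob'},r_{\ob'}$ by $\ell_\ob,r_\ob$ wherever the operation $\ob'$ appears.
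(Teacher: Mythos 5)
Your verification is correct and complete: expanding \eqref{eq:nv1} and \eqref{eq:nv2} on $A\oplus V$ and separating the $A$-, $u$-, $v$-, and $w$-components does yield exactly the Novikov identities for $A$ together with \eqref{eq:r1}--\eqref{eq:r6} (I checked each of the six operator identities and the relabelings all match), and the separation argument gives both implications at once. The paper omits the proof of this proposition as a routine verification, and yours is precisely that standard direct computation.
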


\begin{lem}\mlabel{lem:5.8}
Let $A:=(A,\prec_A,\sca)$ and $B:=(B,\prec_B,\scb)$ be \nvas. Let $\lsa$,$\rsa$,$\lusa$,$\rusa :A\to \End(B)$ and $\lsb,\rsb,\lusb,\rusb: B\to \End(A)$ be linear maps.  Then the decuple $$(A,B,\lsa,\rsa,\lusa,\rusa,\lsb,\rsb,\lusb,\rusb)$$
is a matched pair of \nvas if and only if
\begin{enumerate}
\item $(B,\lsa,\rsa,\lusa,\rusa)$ is a representation of $A$,
\item$(A,\lsb,\rsb,\lusb,\rusb)$ is a representation of $B$, and
\item the following compatibility conditions hold: for all $x,y\in A$ and $a,b\in B$,
\begin{align}
\lusb(a)(x\prec_Ay)&=(\lusb(a)x)\prec_Ay+\lsb(\rusa(x)a)y,\mlabel{eq:5.81}\\
\lusa(x)(a\prec_Bb)&=(\lusa(x)a)\prec_Bb+\lsa(\rusb(a)x)b,\mlabel{eq:5.82}\\
\rsb(a)(x\succ_Ay)&=x\succ_A(\rsb(a)y)+\rusb(\lsa(y)a)x,\mlabel{eq:5.83}\\
\rsa(x)(a\succ_Bb)&=a\succ_B(\rsa(x)b)+\rusa(\lsb(b)x)a,\mlabel{eq:5.84}\\
(\rusb(a)x)\prec_Ay+\lsb(\lusa(x)a)y&=x\succ_A(\lsb(a)y)+\rusb(\rsa(y)a)x,\mlabel{eq:5.85}\\
(\rusa(x)a)\prec_Bb+\lsa(\lusb(a)x)b&=a\succ_B(\lsa(x)b)+\rusa(\rsb(b)x)a,\mlabel{eq:5.86}\\
\ell_{\ob_B}(a)(x\succ_Ay)&=(\lsb(a)x)\ob_Ay+\ell_{\ob_B}(\rsa(x)a)y,\mlabel{eq:5.87}\\
\ell_{\ob_A}(x)(a\succ_Bb)&=(\lsa(x)a)\ob_Bb+\ell_{\ob_A}(\rsb(a)x)b,\mlabel{eq:5.88}\\
r_{\ob_B}(a)(x\prec_Ay)&=x\ob_A(\rusb(a)y)+r_{\ob_B}(\lusa(y)a)x,\mlabel{eq:5.89}\\
r_{\ob_A}(x)(a\prec_Bb)&=a\ob_B(\rusa(x)b)+r_{\ob_A}(\lusb(b)x)a,\mlabel{eq:5.810}\\
(\rsb(a)x)\ob_Ay+\ell_{\ob_B}(\lsa(x)a)y&=x\ob_A(\lusb(a)y)+r_{\ob_B}(\rusa(y)a)x,\mlabel{eq:5.811}\\
(\rsa(x)a)\ob_Bb+\ell_{\ob_A}(\lsb(a)x)b&=a\ob_B(\lusa(x)b)+r_{\ob_A}(\rusb(b)x)a.\mlabel{eq:5.812}
\end{align}
 \end{enumerate}
\end{lem}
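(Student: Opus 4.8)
The plan is to impose the two defining identities \meqref{eq:nv1}--\meqref{eq:nv2} of a \nva on the candidate structure $(A\oplus B,\prec_{A\bowtie B},\succ_{A\bowtie B})$ of Definition~\mref{defi:mp} and to extract the listed conditions by a systematic expansion. Put $X=x+a$, $Y=y+b$, $Z=z+c$ with $x,y,z\in A$ and $a,b,c\in B$. Because $A\oplus B$ is a direct sum, each of \meqref{eq:nv1} and \meqref{eq:nv2} for $A\oplus B$ is equivalent to the conjunction of its projections onto $A$ and onto $B$; and because $\prec_{A\bowtie B},\succ_{A\bowtie B}$, and hence $\ob_{A\bowtie B}:=\prec_{A\bowtie B}+\succ_{A\bowtie B}$, are multilinear, each projected identity is equivalent to its $2^3$ specializations in which each of $X,Y,Z$ is taken either purely in $A$ or purely in $B$. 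So I would first reduce the whole problem to a finite checklist of identities indexed by a choice of axiom, a choice of which arguments lie in $A$ versus $B$, and a choice of projection.

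Next I would dispose of the two ``pure'' extremes. Since the structure maps vanish on $0$, when $X,Y,Z\in A$ both projections of \meqref{eq:nv1} (resp. \meqref{eq:nv2}) collapse to \meqref{eq:nv1} (resp. \meqref{eq:nv2}) for $(A,\prec_A,\succ_A)$, which holds by hypothesis; symmetrically for $X,Y,Z\in B$. Thus these cases add nothing beyond ``$A$ and $B$ are \nvas''. All the remaining content lies in the $12$ mixed cases (six per axiom, with one or two arguments in the opposite summand).

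The key claim, to be verified by a direct expansion one mixed case at a time, is that each mixed identity, split by its two projections, is equivalent to precisely one representation identity from \meqref{eq:r1}--\meqref{eq:r6} (for $(B,\lsa,\rsa,\lusa,\rusa)$ as a representation of $A$, or for $(A,\lsb,\rsb,\lusb,\rusb)$ as a representation of $B$) together with precisely one compatibility identity from \meqref{eq:5.81}--\meqref{eq:5.812}; the operators $\ell_{\ob_A}=\lsb+\lusb$, $r_{\ob_A}=\rsb+\rusb$, $\ell_{\ob_B}=\lsa+\lusa$, $r_{\ob_B}=\rsa+\rusa$ appear precisely because $\ob_{A\bowtie B}$ is the sum of the two products. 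As a sample: in \meqref{eq:nv1} with $X=x,Y=y\in A$ and $Z=c\in B$, expanding $(x\succ_A y)\prec_{A\bowtie B}c$ and $x\succ_{A\bowtie B}(y\prec_{A\bowtie B}c)$ via \meqref{eq:do1}--\meqref{eq:do2} gives, on the $B$-component, $\lsa(x\succ_A y)=\lusa(x)\lsa(y)$, i.e. \meqref{eq:r1} for $B$ over $A$, and, on the $A$-component, $\rsb(c)(x\succ_A y)=x\succ_A(\rsb(c)y)+\rusb(\lsa(y)c)x$, i.e. \meqref{eq:5.83}; similarly \meqref{eq:nv1} with $X=a\in B$, $Y=y,Z=z\in A$ yields \meqref{eq:r3} for $B$ over $A$ on the $B$-component and \meqref{eq:5.81} on the $A$-component. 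A count then finishes the argument: the six one-$B$-argument mixed cases across the two axioms produce exactly the six relations \meqref{eq:r1}--\meqref{eq:r6} for $B$ over $A$ together with six of the twelve compatibility relations, while the $A\leftrightarrow B$ symmetry (swap $\prec_A\leftrightarrow\prec_B$, $\succ_A\leftrightarrow\succ_B$, $\lsa\leftrightarrow\lsb$, $\rsa\leftrightarrow\rsb$, $\lusa\leftrightarrow\lusb$, $\rusa\leftrightarrow\rusb$ and $x,y,z\leftrightarrow a,b,c$) carries these to the two-$B$-argument cases and accounts for \meqref{eq:r1}--\meqref{eq:r6} for $A$ over $B$ and the remaining six compatibility relations. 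Each labelled condition is hit once and only once, so $(A\oplus B,\prec_{A\bowtie B},\succ_{A\bowtie B})$ is a \nva if and only if (1), (2) and (3) hold, which establishes both implications simultaneously.

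The only genuine difficulty is bookkeeping: carrying out the $24$ projected expansions and correctly matching each one, after relabelling of variables, to one of the $24$ labelled conditions without omission or duplication. Two devices cut the labour: the $A\leftrightarrow B$ symmetry halves the cases, and Proposition~\mref{pro:ld-rep} lets one package conditions (1) and (2) as the statements that the semi-direct products $A\ltimes B$ and $B\ltimes A$ are \nvas, so that only the genuinely ``cross'' terms need to be expanded by hand.
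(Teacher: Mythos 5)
Your proposal is correct and is exactly the "direct calculation" that the paper's proof consists of (the paper gives no further detail): expanding \meqref{eq:nv1}--\meqref{eq:nv2} on $A\oplus B$, projecting onto the two summands, and matching the $24$ resulting mixed identities bijectively with the six representation identities for each of $B$ over $A$ and $A$ over $B$ plus the twelve compatibility conditions. Your two sample expansions check out (they do yield \meqref{eq:r1} with \meqref{eq:5.83}, and \meqref{eq:r3} with \meqref{eq:5.81}, respectively), and the counting and symmetry bookkeeping is sound.
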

\begin{proof}
The proof follows from a direct calculation.
\end{proof}

\subsection{Noncocommutative Novikov bialgebras}

As the dual notion, we define a  \nvca.
\begin{defi}\mlabel{defi:nvca}
A \textbf{\nvca} is a quadruple $(A,\Delta_{\prec},\Delta_{\su})$ consisting of a vector space $A$ and  linear maps $\Delta_{\prec},\Delta_{\su}:A\to A\ot A$ such that,
\begin{align}
(\Delta_{\succ}\otimes \id)\Delta_{\prec}(x)&=(\id\otimes \Delta_{\prec})\Delta_{\succ}(x),\mlabel{eq:exca1}\\
(\Delta_{\prec}\otimes \id)\Delta_{\ob}(x)&=(\id\otimes \Delta_{\succ})\Delta_{\ob}(x), \quad x\in A.\mlabel{eq:exca2}
\end{align}
Here $\Delta_\ob:=\Delta_\prec+\Delta_\succ$.
\end{defi}
\begin{defi}~\cite{KUZ22,HBG23}
A {\bf Novikov coalgebra} is a vector space $A$ equipped with a linear map $\Delta: A\to A\ot A$
such that for all $x\in A$,
\begin{align}(\Delta\ot\id)\Delta(x)&=(\sigma\ot\id)(\id\ot\Delta)\sigma\Delta(x)
,\mlabel{eq:co1}\\
(\id\ot \Delta)\Delta(x)-(\sigma\ot \id)(\id\ot \Delta)\Delta(x)&=(\Delta\ot\id)\Delta(x)-(\sigma\ot\id)(\Delta\ot \id)\Delta(x).\mlabel{eq:co2}
\end{align}
\end{defi}
For a given \nvca $A:=(A,\Delta_\prec,\Delta_\succ)$, if $\Delta$ is cocommutative in the sense of $\Delta_\succ =\sigma\Delta_\prec$, then $A$ is exactly a  Novikov coalgebra.

\begin{lem}\mlabel{lem:nva-equi}
	Let $A$ be a vector space, and let $\Delta_{\prec_{A}},\Delta_{\su_{A}}: A\to A\ot A$ be linear maps. Let $\prec_{A^{*}},\su_{A^{*}}:A^{*}\otimes A^{*}\rightarrow A^{*}$ be the linear duals of $\Delta_{\prec_{A}}$ and $\Delta_{\su_{A}}$, respectively.
Then $(A,\Delta_{\prec_{A}},\Delta_{\su_{A}})$ is  a \nvca if and only if $(A^*,\prec_{A^*},\su_{A^*})$ is a \nva.
\end{lem}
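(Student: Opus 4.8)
The plan is a straightforward duality argument resting on the nondegeneracy of the natural pairing between $A$ and $A^{*}$ in finite dimensions. The point is that the two noncommutative Novikov coalgebra axioms \eqref{eq:exca1}--\eqref{eq:exca2} are precisely the linear duals of the two defining identities \eqref{eq:nv1}--\eqref{eq:nv2} of a noncommutative Novikov algebra, applied to $(A^{*},\prec_{A^{*}},\su_{A^{*}})$.

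First I would extend $\langle\,,\,\rangle\colon A^{*}\times A\to\bfk$ to the pairing $\langle\,,\,\rangle\colon (A^{*})^{\otimes 3}\times A^{\otimes 3}\to\bfk$ determined by $\langle a^{*}\otimes b^{*}\otimes c^{*},\,x\otimes y\otimes z\rangle=\langle a^{*},x\rangle\langle b^{*},y\rangle\langle c^{*},z\rangle$, which is nondegenerate and identifies $(A^{\otimes 3})^{*}$ with $(A^{*})^{\otimes 3}$. Since $\prec_{A^{*}},\su_{A^{*}}$ are by definition the linear duals of $\Delta_{\prec_{A}},\Delta_{\su_{A}}$, and hence (dualization being linear) $\ob_{A^{*}}:=\prec_{A^{*}}+\su_{A^{*}}$ is the dual of $\Delta_{\ob_{A}}$, unwinding the definitions gives, for all $a^{*},b^{*},c^{*}\in A^{*}$ and $x\in A$, the identities
\begin{align*}
\langle a^{*}\otimes b^{*}\otimes c^{*},(\Delta_{\su_{A}}\otimes\id)\Delta_{\prec_{A}}(x)\rangle&=\langle (a^{*}\su_{A^{*}}b^{*})\prec_{A^{*}}c^{*},\,x\rangle,\\
\langle a^{*}\otimes b^{*}\otimes c^{*},(\id\otimes\Delta_{\prec_{A}})\Delta_{\su_{A}}(x)\rangle&=\langle a^{*}\su_{A^{*}}(b^{*}\prec_{A^{*}}c^{*}),\,x\rangle,\\
\langle a^{*}\otimes b^{*}\otimes c^{*},(\Delta_{\prec_{A}}\otimes\id)\Delta_{\ob_{A}}(x)\rangle&=\langle (a^{*}\prec_{A^{*}}b^{*})\ob_{A^{*}}c^{*},\,x\rangle,\\
\langle a^{*}\otimes b^{*}\otimes c^{*},(\id\otimes\Delta_{\su_{A}})\Delta_{\ob_{A}}(x)\rangle&=\langle a^{*}\ob_{A^{*}}(b^{*}\su_{A^{*}}c^{*}),\,x\rangle.
\end{align*}
Each of these follows by writing $\Delta_{\bullet}(x)=\sum x_{(1)}\otimes x_{(2)}$, pushing the remaining $\Delta$ into the appropriate leg, and recognizing a two-legged pairing as an application of $\prec_{A^{*}}$, $\su_{A^{*}}$, or $\ob_{A^{*}}$.

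Comparing the first two displays shows that \eqref{eq:exca1} holds for all $x\in A$ if and only if $(a^{*}\su_{A^{*}}b^{*})\prec_{A^{*}}c^{*}=a^{*}\su_{A^{*}}(b^{*}\prec_{A^{*}}c^{*})$ for all $a^{*},b^{*},c^{*}\in A^{*}$, i.e. \eqref{eq:nv1} holds for $A^{*}$; comparing the last two shows likewise that \eqref{eq:exca2} is equivalent to \eqref{eq:nv2} for $A^{*}$. Both equivalences use nondegeneracy of the pairing. Hence $(A,\Delta_{\prec_{A}},\Delta_{\su_{A}})$ satisfies \eqref{eq:exca1}--\eqref{eq:exca2} if and only if $(A^{*},\prec_{A^{*}},\su_{A^{*}})$ satisfies \eqref{eq:nv1}--\eqref{eq:nv2}, which is the assertion.

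The only real work lies in the bookkeeping behind the four displayed identities — tracking which tensor leg each $\id$ acts on and checking that the chosen pairing convention produces no stray flip map $\sigma$ (this is exactly why, in contrast to the Novikov coalgebra axioms, the noncommutative version carries no $\sigma$). Finite-dimensionality is invoked to identify $A^{**}\cong A$ (so that $\Delta_{\prec_{A}},\Delta_{\su_{A}}$ are genuinely recovered as the duals of $\prec_{A^{*}},\su_{A^{*}}$) and $(A^{\otimes 3})^{*}\cong (A^{*})^{\otimes 3}$; beyond these standard identifications there is no essential obstacle.
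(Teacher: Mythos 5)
Your dualization argument is correct, and it is exactly the intended proof: the paper states Lemma \ref{lem:nva-equi} without proof, treating it as the standard fact that the identities \meqref{eq:exca1}--\meqref{eq:exca2} are the componentwise duals of \meqref{eq:nv1}--\meqref{eq:nv2} under the nondegenerate pairing of $(A^{*})^{\otimes 3}$ with $A^{\otimes 3}$. Your four displayed pairing identities and the observation that $\ob_{A^{*}}$ is dual to $\Delta_{\ob_{A}}$ by linearity of dualization supply precisely the omitted bookkeeping.
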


\begin{defi}\mlabel{def:lpdbia}
A \textbf{\nvbia} is a quintuple $(A,\prec,\su,\Delta_{\prec},\Delta_{\su})$ consisting of a vector space $A$ and linear maps
$$\prec,\su : A\ot A \to A , \Delta_{\prec},\Delta_{\su}: A \to  A\ot A $$
such that
\begin{enumerate}
\item $(A,\prec,\su)$ is a \nva,
\item $(A,\Delta_{\prec},\Delta_{\su})$ is a \nvca, and
\item the following compatibility conditions hold: for $x,y \in A$,
\begin{align}
\Delta_{\prec}(x\prec y)&=(R_{\prec}(y)\ot \id)\Delta_{\prec}(x)+(\id \ot L_{\ob}(x))\Delta_{\ob}(y),\mlabel{eq:nvb1}\\
\Delta_{\su}(x\su y)&=(R_{\ob}(y)\ot \id)\Delta_{\ob}(x)+(\id \ot L_{\su}(x))\Delta_{\su}(y),\mlabel{eq:nvb2}\\
(\id \ot R_{\prec}(y))\Delta_{\ob}(x)+\sigma((\id\ot R_{\prec}(x))\Delta_{\ob}(y))&=(L_{\su}(y)\ot \id)\Delta_{\ob}(x)+\sigma((L_{\su}(x)\ot \id)\Delta_{\ob}(y)),\mlabel{eq:nvb3}\\
(L_{\ob}(y)\ot \id)\Delta_{\prec}(x)+\sigma((L_{\ob}(x)\ot \id)\Delta_{\prec}(y))&=(\id \ot R_{\ob}(y))\Delta_{\su}(x)+\sigma((\id\ot R_{\ob}(x))\Delta_{\su}(y)).\mlabel{eq:nvb4}
\end{align}
\end{enumerate}
\end{defi}
We next recall  the notion of  a  Novikov bialgebra.
\begin{defi}~\cite[Definition~2.5]{HBG23}
A {\bf Novikov bialgebra} is a tripe $(A,\cdot,\Delta)$, where $(A,\cdot)$ is a  Novikov algebra and $(A,\Delta)$ is a  Novikov coalgebra such that, for all $x,y\in A$, the following conditions are satisfied.
\begin{eqnarray}
\Delta(x\cdot y)=(R(y)\ot \id)\Delta(x)+(\id\ot L(x))(\Delta(y)+\sigma\Delta(y)).\mlabel{eq:cc1}\\
(L_\ob(x)\ot\id)\Delta(y)-(\id\ot L_\ob(x))\sigma\Delta(y)=(L_\ob(y)\ot\id)\Delta(x)-(\id\ot L_\ob(y))\sigma\Delta(x).\mlabel{eq:cc2}\\
(\id\ot R(x)-R(x)\ot\id)(\Delta(y)+\sigma\Delta(y))=(\id\ot R(y)-R(y)\ot\id)(\Delta(x)\ot \sigma \Delta(x)).\mlabel{eq:cc3}
\end{eqnarray}
\end{defi}
\begin{rmk}
If the \nvbia $(A,\prec, \succ,\Delta_\prec,\Delta_\succ)$ satisfies the commutative law (i.e. $x\prec y=y\succ x$), and the cocommutative law (i.e. $\Delta_\succ=\sigma\Delta_\prec$), then Eq.~\meqref{eq:nvb1} is equivalent to Eq.~\meqref{eq:nvb2}, which is just Eq.~\meqref{eq:cc1}, and further Eqs.~\meqref{eq:nvb3} and \meqref{eq:nvb4} are exactly Eqs.~\meqref{eq:cc2} and \meqref{eq:cc3}.
\end{rmk}

\begin{ex}
Let $(A=\bfk e_1\oplus \bfk e_2,\prec,\su)$ be a two-dimensional vector space with binary operations $\prec,\su$ given by the following Cayley tables.

\begin{center}
\begin{tabular}{c | c c c }
$\prec$	& $e_1$ & $e_2$   \\
\hline 	
	$e_1$ & $-e_1$ & $0$  \\
	$e_2$ & $0$ & $-e_2$  \\
\end{tabular}\quad\quad\quad
\begin{tabular}{c | c c c }
$\su$	& $e_1$ & $e_2$   \\
\hline 	
	$e_1$ & $e_1$ & $0$  \\
	$e_2$ & $0$ & $e_2$  \\
\end{tabular}
\end{center}
Define $\Delta_\prec,\Delta_{\su}:A\ot A \rightarrow A$ by
\begin{eqnarray*}
\Delta_\prec(e_1) =-e_1\ot e_1, \quad \Delta_\su(e_1) =e_1\ot e_1,\\
\Delta_\prec(e_2) =-e_2\ot e_2, \quad \Delta_\su(e_2) =e_2\ot e_2.
\end{eqnarray*}
Then it can be checked that $(A,\prec,\su,\Delta_{\prec},\Delta_{\su})$ is a \nvbia.
\end{ex}

\subsection{Characterizations of \nvbias}
We next introduce the notion of a quadratic \nva as follows.
\begin{defi}
A {\bf quadratic \nva} $(A,\prec,\succ,\mathfrak{B})$ is a \nva $(A,\prec,\succ)$ together with a nondegenerate symmetric bilinear form $$\mathfrak{B}: A\otimes A\to \bfk,$$ 
which is {\bf invariant} on $(A,\prec,\succ)$ in the sense that $\frakB$ satisfies
\begin{equation}
\frakB(x\prec y,z)=-\frakB(y,z\ob x)=\frakB(x,y\succ z),\quad x,y,z \in A.\mlabel{eq:inv}
\end{equation}
\end{defi}

\begin{defi}
	Let $A:=(A,\prec_{A},\su_{A})$ be a \nva.   Suppose that  $A^*:=(A^*,\prec_{A^{*}},\su_{A^{*}})$ is also a \nva. If there is a \nva structure
	$(A\oplus A^{*},\prec_{A\bowtie A^*},\su_{A\bowtie A^*})$
	 on $A\oplus A^*$, which contains $(A,\prec_{A},\su_{A})$ and $(A^*,\prec_{A^{*}},\su_{A^{*}})$ as subalgebras, and such that the natural nondegenerate symmetric bilinear form $\frakB_d$ on $A\oplus A^*$ given by
	\begin{equation}
		\frakB_d(x+a^*,y+b^*):=\langle a^*,y\rangle+\langle x,b^*\rangle,\mlabel{eq:sn}
	\end{equation}
	is invariant, then  $\big( (A\oplus A^{*},\prec_{A\bowtie A^*},\su_{A\bowtie A^*},\frak B_{d}), (A,\prec_{A},\su_{A}), (A^*,\prec_{A^{*}},\su_{A^{*}})   \big)$ or simply  $\big( (A\bowtie A^{*},\prec_{A\bowtie A^*},\su_{A\bowtie A^*},\frak B_{d}),  A, A^* \big)$ is called a {\bf Manin triple of \nvas}.
\end{defi}

\begin{thm}\mlabel{thm:mpmt}
Let $(A,\prec_{A},\su_{A})$ and $ (A^*,\prec_{A^{*}},\su_{A^{*}})$ be \nvas. Let $\Delta_{\prec_{A}},\Delta_{\su_{A}}$ be the linear dual of $\prec_{A^{*}},\su_{A^{*}}$, respectively. 
Then the following statements are equivalent:
\begin{enumerate}
\item \mlabel{it:m1}
There is a Manin triple of \nvas $\big( (A\oplus A^{*},\prec_{A\bowtie A^*},\su_{A\bowtie A^*},\frak B_{d})$, $(A,\prec_{A},\su_{A}), (A^*,\prec_{A^{*}},\su_{A^{*}})\big)$.
\item \mlabel{it:m2}
The decuple $(A,A^*,-R^*_{\ob_A}, L^*_{\succ_A}, R^*_{\prec_A}, -L^*_{\ob_A},
			-R^*_{\ob_{A^*}}, L^*_{\succ_{A^*}}, R^*_{\prec_{A^*}}, -L^*_{\ob_{A^*}})$ is a matched pair of \nvas.
\item\mlabel{it:m3}
The quintuple $(A,\prec_{A},\su_{A},\Delta_{\prec_{A}},\Delta_{\su_{A}})$ is a \nvbia.
\end{enumerate}
\end{thm}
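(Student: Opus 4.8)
\textbf{Proof strategy for Theorem \ref{thm:mpmt}.}
The plan is to prove the cycle of equivalences \eqref{it:m1} $\Leftrightarrow$ \eqref{it:m2} $\Leftrightarrow$ \eqref{it:m3}, using the Manin triple as the pivot between the algebraic data (matched pair) and the coalgebraic data (bialgebra). The equivalence \eqref{it:m1} $\Leftrightarrow$ \eqref{it:m2} is essentially a translation of the structural definitions. A \nva structure on $A \oplus A^*$ that contains $A$ and $A^*$ as subalgebras is, by Definition \ref{defi:mp}, exactly a matched pair structure $(A, A^*, \lsa, \rsa, \lusa, \rusa, \ell_{\prec_{A^*}}, \ldots)$, once one identifies the cross-action maps. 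So the content of \eqref{it:m1} $\Leftrightarrow$ \eqref{it:m2} is that the invariance of $\frakB_d$ (Eq.~\eqref{eq:sn}) relative to $\prec_{A\bowtie A^*}, \su_{A\bowtie A^*}$ forces precisely the choice of cross-actions $\lsb = R^*_{\prec_A}$, $\rsb = -R^*_{\ob_A}$ (wait---more carefully: the maps $B \to \End(A)$ are read off from the $A$-component of $a^* \prec_{A\bowtie B} x$, etc.), giving the coadjoint representation of $A$ on $A^*$ and vice versa. First I would write $a^* \prec_{A\bowtie A^*} x$ and $a^* \su_{A\bowtie A^*} x$ in terms of the putative actions, then impose Eq.~\eqref{eq:inv} for $\frakB_d$ on all triples drawn from $A$ and $A^*$, and solve: invariance applied to $(x, y, a^*)$-type triples recovers exactly the coadjoint formulas of Example \ref{ex:ap}. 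Conversely, if the actions are the coadjoint ones, a direct check shows $\frakB_d$ is invariant. This step is routine bilinear-form bookkeeping.

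The equivalence \eqref{it:m2} $\Leftrightarrow$ \eqref{it:m3} is the heart of the theorem. By Lemma \ref{lem:nva-equi}, $(A^*, \prec_{A^*}, \su_{A^*})$ being a \nva is the same as $(A, \Delta_{\prec_A}, \Delta_{\su_A})$ being a \nvca, so conditions (1) and (2) of a matched pair (namely, that $A^*$ gives a representation of $A$ via the coadjoint action, and $A$ gives a representation of $A^*$ via its coadjoint action) are automatically built into the hypotheses together with Example \ref{ex:ap} and Proposition \ref{pro:dg}. Therefore the whole equivalence reduces to showing that the twelve compatibility conditions \eqref{eq:5.81}--\eqref{eq:5.812} of Lemma \ref{lem:5.8}, specialized to the coadjoint actions $\lsa = -R^*_{\ob_A}$, $\rsa = L^*_{\su_A}$, $\lusa = R^*_{\prec_A}$, $\rusa = -L^*_{\ob_A}$ on $B = A^*$ (and symmetrically for $A^*$ acting on $A$), are equivalent to the four bialgebra compatibility conditions \eqref{eq:nvb1}--\eqref{eq:nvb4}. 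The plan is: take each of \eqref{eq:5.81}--\eqref{eq:5.812}, which is an identity of operators in $\End(A)$ or $\End(A^*)$, pair it against arbitrary elements of $A$ and $A^*$ using $\langle\,,\,\rangle$, push all the stars across via Eq.~\eqref{eq:phis} and the definition of $\rho^*$, and recognize the resulting scalar identity as the evaluation of one of \eqref{eq:nvb1}--\eqref{eq:nvb4} (or its $\sigma$-twist) against a tensor. Because of the built-in symmetry $A \leftrightarrow A^*$, the twelve conditions collapse in pairs: \eqref{eq:5.81} and \eqref{eq:5.82} are dual to each other and both encode \eqref{eq:nvb1}; \eqref{eq:5.83}, \eqref{eq:5.84} encode \eqref{eq:nvb2}; the ``mixed'' conditions \eqref{eq:5.85}--\eqref{eq:5.86} and \eqref{eq:5.87}--\eqref{eq:5.812} distribute among \eqref{eq:nvb3} and \eqref{eq:nvb4}. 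I expect roughly a 2-to-1 or 3-to-1 bundling, so that all twelve matched-pair conditions are accounted for by the four bialgebra conditions and no extra condition survives.

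The main obstacle will be the sheer bookkeeping in the middle step: correctly dualizing operator identities involving compositions like $\lusb(a)(x \prec_A y)$ into co-operations, keeping track of which tensor leg each $\Delta_\prec$ or $\Delta_\su$ lands in, and handling the flip map $\sigma$ that appears when one transposes a bilinear pairing on $A \ot A$ versus $A^* \ot A^*$. In particular, conditions \eqref{eq:nvb3} and \eqref{eq:nvb4} are manifestly $\sigma$-symmetric combinations, and matching them requires symmetrizing two of the matched-pair conditions against each other rather than dualizing a single one; getting the signs consistent (recall the coadjoint action carries minus signs on $R^*_{\ob}$ and $L^*_{\ob}$) is where errors are most likely. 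A clean way to organize this is to prove a single ``dualization dictionary'' lemma up front---stating, for each relevant composite of coadjoint operators, the corresponding operation on $A \ot A$ in terms of $R_\prec, R_\su, L_\prec, L_\su, \Delta_\prec, \Delta_\su, \sigma$---and then the verification of each of the four bialgebra axioms becomes a short substitution. I would not grind through all twelve; I would verify \eqref{eq:5.81} $\Leftrightarrow$ \eqref{eq:nvb1} in full detail as the model case, note that \eqref{eq:5.82} is its $A \leftrightarrow A^*$ mirror, and state that \eqref{eq:5.83}--\eqref{eq:5.812} follow by the same mechanism, leaving the analogous computations to the reader.
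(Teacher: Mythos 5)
Your proposal follows essentially the same route as the paper: (i)$\Leftrightarrow$(ii) by reading off the cross-actions from the invariance of $\frakB_d$ and recognizing the coadjoint representations, and (ii)$\Leftrightarrow$(iii) by reducing the twelve matched-pair conditions of Lemma~\ref{lem:5.8} to the four bialgebra conditions via dualization (the paper groups them as \eqref{eq:5.81}$\Leftrightarrow$\eqref{eq:5.82}$\Leftrightarrow$\eqref{eq:5.89}$\Leftrightarrow$\eqref{eq:5.810}, \eqref{eq:5.83}$\Leftrightarrow$\eqref{eq:5.84}$\Leftrightarrow$\eqref{eq:5.87}$\Leftrightarrow$\eqref{eq:5.88}, \eqref{eq:5.85}$\Leftrightarrow$\eqref{eq:5.812}, \eqref{eq:5.86}$\Leftrightarrow$\eqref{eq:5.811}, matching \eqref{eq:nvb1}--\eqref{eq:nvb4} respectively, and then invokes the argument of Bai's Theorem 2.2.3). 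Your tentative assignment of which conditions bundle together is slightly off from the paper's, but the strategy is the same and this would sort itself out in the computation.
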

\begin{proof}
((a)$\Rightarrow$(b))
	Suppose that  $\big( (A\oplus A^{*},\prec_{A\bowtie A^*},\su_{A\bowtie A^*},\frak B_{d})$, $(A,\prec_{A},\su_{A}), (A^*,\prec_{A^{*}},\su_{A^{*}})\big)$ is a Manin triple of \nvas. For all $x,y,z\in A,  a^*,b^*,c^*\in A^*$, we have 
	\begin{eqnarray*}
	\langle x \su_{A\bowtie A^*} b^*,z\rangle+ \langle    x \su_{A\bowtie A^*} b^*, c^*\rangle &=& \frakB_d(x \su_{A\bowtie A^*} b^*,z+c^*) \\&=& \frakB_d(x \su_{A\bowtie A^*} b^*,z)+\frakB_d(x \su_{A\bowtie A^*} b^*,c^*) \\&=& \frakB_d(z \prec_A x,b^*)-\frakB_d(x ,b^*\ob_{A^*}c^*)\\&=&\langle z,R_{\prec_A}^*(x)b^* \rangle- \langle    c^*, L_{\ob_{A^*}}^*(b^*)x\rangle.
	\end{eqnarray*}
Thus, $x \su_{A\bowtie A^*} b^*=R_{\prec_A}^*(x)b^*-L_{\ob_{A^*}}^*(b^*)x.$
Similarly, we have
\begin{eqnarray*}
		\langle a^* \su_{A\bowtie A^*} y,z\rangle+ \langle    a^* \su_{A\bowtie A^*} y, c^*\rangle &=& \frakB_d(a^* \su_{A\bowtie A^*} y,z+c^*) \\&=& \frakB_d(a^* \su_{A\bowtie A^*} y,z)+\frakB_d(a^* \su_{A\bowtie A^*} y,c^*) \\&=& -\frakB_d(a^*,y\ob_{A}z)+\frakB_d(c^*\prec_{A^*}a^* ,y)\\&=&-\langle z,L_{\ob_{A}}^*(y)a^* \rangle+ \langle    c^*, R_{\prec_{A^*}}^*(a^*)y\rangle.
	\end{eqnarray*}
Hence, $a^* \su_{A\bowtie A^*} y=-L_{\ob_{A}}^*(y)a^*+R_{\prec_{A^*}}^*(a^*)y.$
So we get
\begin{equation}\mlabel{eq:xa1}
(x+a^*) \su_{A\bowtie A^*} (y+b^*)=(x\su y-L_{\ob_{A^*}}^*(b^*)x+R_{\prec_{A^*}}^*(a^*)y)+(a^*\su_{A^*}b^*+R_{\prec_A}^*(x)b^*-L_{\ob_{A}}^*(y)a^*).
\end{equation}
Analogously, we obtain
\begin{equation}\mlabel{eq:xa2}
(x+a^*) \prec_{A\bowtie A^*} (y+b^*)=(x\prec y+L_{\su_{A^*}}^*(b^*)x-R_{\ob_{A^*}}^*(a^*)y)+(a^*\prec_{A^*}b^*-R_{\ob_A}^*(x)b^*+L_{\su_{A}}^*(y)a^*).
\end{equation}
By Definition \mref{defi:mp} and Eqs.~\meqref{eq:xa1}--\meqref{eq:xa2},
$(A,A^*,-R^*_{\ob_A}, L^*_{\succ_A}, R^*_{\prec_A}, -L^*_{\ob_A},
	-R^*_{\ob_{A^*}}, L^*_{\succ_{A^*}}, R^*_{\prec_{A^*}}, -L^*_{\ob_{A^*}})$ is a matched pair of \nvas.
\smallskip

\noindent
((b)$\Rightarrow$(a))
	If $(A,A^*,-R^*_{\ob_A}, L^*_{\succ_A}, R^*_{\prec_A}, -L^*_{\ob_A},
	-R^*_{\ob_{A^*}}, L^*_{\succ_{A^*}}, R^*_{\prec_{A^*}}, -L^*_{\ob_{A^*}})$ is a matched pair of \nvas, then by Definition \mref{defi:mp}, $(A\oplus A^{*},\prec_{A\bowtie A^*},\su_{A\bowtie A^*})$ is a noncommutative Novikov algebra. For all $x,y,z\in A, a^*,b^*,c^*\in A^*$, we have
	\begin{eqnarray*}
		&& \frakB_d((x+a^*) \prec_{A\bowtie A^*} (y+b^*),z+c^*)\\&=&\frakB_d((x\prec_{A} y+L_{\su_{A^*}}^*(b^*)x-R_{\ob_{A^*}}^*(a^*)y)+(a^*\prec_{A^*}b^*-R_{\ob_A}^*(x)b^*+L_{\su_{A}}^*(y)a^*),z+c^*)\\&=&\langle x\prec_{A} y+L_{\su_{A^*}}^*(b^*)x-R_{\ob_{A^*}}^*(a^*)y ,c^*\rangle+ \langle a^*\prec_{A^*}b^*-R_{\ob_A}^*(x)b^*+L_{\su_{A}}^*(y)a^*,z\rangle\\&=&\langle x\prec_{A} y, c^*\rangle+\langle x,b^*\su_{A^*}c^* \rangle-\langle y,c^*\ob_{A^*} a^*\rangle+\langle a^*\prec_{A^*} b^*,z\rangle-\langle b^*,z\ob_{A} x \rangle+\langle a^*,y\su_{A} z\rangle.
		\end{eqnarray*}
			\begin{eqnarray*}
			&& -\frakB_d(y+b^*,(z+c^*) \ob_{A\bowtie A^*} (x+a^*))\\&=&-\frakB_d(y+b^*,z\ob_{A} x+(L_{\su_{A^*}}^*-L_{\ob_{A^*}}^*)(a^*)z+(R_{\prec_{A^*}}^*-R_{\ob_{A^*}}^*)(c^*)x+c^*\ob_{A^*}a^*+(R_{\prec_A}^*-R_{\ob_A}^*)(z)a^*\\&&+(L_{\su_{A}}^*-L_{\ob_{A}}^*)(x)c^*)\\&=&-\frakB_d(y+b^*,z\ob_{A} x-L_{\prec_{A^*}}^*(a^*)z-R_{\su_{A^*}}^*(c^*)x+c^*\ob_{A^*}a^*-R_{\su_A}^*(z)a^*-L_{\prec_{A}}^*(x)c^*)\\&=&-\langle b^*,z\ob_{A} x-L_{\prec_{A^*}}^*(a^*)z-R_{\su_{A^*}}^*(c^*)x\rangle -\langle y,c^*\ob_{A^*}a^*-R_{\su_A}^*(z)a^*-L_{\prec_{A}}^*(x)c^* \rangle \\&=&-\langle b^*,z\ob_{A} x \rangle+\langle a^*\prec_{A^*} b^*,z\rangle+\langle b^*\su_{A^*}c^* ,x\rangle-\langle y,c^*\ob_{A^*} a^*\rangle+\langle a^*,y\su_{A} z\rangle+\langle x\prec_{A} y, c^*\rangle.
		\end{eqnarray*}
		\begin{eqnarray*}
			&& \frakB_d((y+b^*) \su_{A\bowtie A^*} (z+c^*),x+a^*)\\&=&\frakB_d((y\su_{A} z-L_{\ob_{A^*}}^*(c^*)y+R_{\prec_{A^*}}^*(b^*)z)+(b^*\su_{A^*}c^*+R_{\prec_A}^*(y)c^*-L_{\ob_{A}}^*(z)b^*),x+a^*)\\&=&\langle y\su_{A} z-L_{\ob_{A^*}}^*(c^*)y+R_{\prec_{A^*}}^*(b^*)z ,a^*\rangle+ \langle b^*\su_{A^*}c^*+R_{\prec_A}^*(y)c^*-L_{\ob_{A}}^*(z)b^*,x\rangle\\&=&\langle y\su_{A} z,a^* \rangle-\langle y,c^*\ob_{A^*} a^*\rangle + \langle z,a^*\prec_{A^*} b^*\rangle +\langle b^*\su_{A^*}c^* ,x \rangle+ \langle c^*,x\prec_{A} y \rangle-\langle b^*,z\ob_{A} x \rangle.
		\end{eqnarray*}
		Thus, $$\frakB_d((x+a^*) \prec_{A\bowtie A^*} (y+b^*),z+c^*)=-\frakB_d(y+b^*,(z+c^*) \ob_{A\bowtie A^*} (x+a^*))=\frakB_d((y+b^*) \su_{A\bowtie A^*} (z+c^*),x+a^*).$$
 So $\frakB_d$ is invariant, and thus $\big( (A\bowtie A^{*},\prec_{A\bowtie A^*},\su_{A\bowtie A^*},\frak B_{d})$, $(A,\prec_{A},\su_{A}), (A^*,\prec_{A^{*}},\su_{A^{*}})\big)$ is a Manin triple of \nvas.
\smallskip

\noindent
((b)$\Rightarrow$(c))
If	$(A,A^*,-R^*_{\ob_A}, L^*_{\succ_A}, R^*_{\prec_A}, -L^*_{\ob_A},
		-R^*_{\ob_{A^*}}, L^*_{\succ_{A^*}}, R^*_{\prec_{A^*}}, -L^*_{\ob_{A^*}})$ is a matched pair of \nvas, then by a direct computation, for any $x,y \in A, a^*,b^* \in A^*$,
we obtain the following equivalences.
\begin{enumerate}
\item
Eq. \meqref{eq:5.81}$\Leftrightarrow $Eq. \meqref{eq:5.82}$\Leftrightarrow $ Eq. \meqref{eq:5.89}$\Leftrightarrow $Eq. \meqref{eq:5.810}.
\item
	Eq. \meqref{eq:5.83}$\Leftrightarrow $Eq. \meqref{eq:5.84}$\Leftrightarrow $ Eq. \meqref{eq:5.87}$\Leftrightarrow $Eq. \meqref{eq:5.88}.
\item Eq. \meqref{eq:5.85}$\Leftrightarrow $Eq. \meqref{eq:5.812}.
\item	Eq. \meqref{eq:5.86}$\Leftrightarrow $Eq. \meqref{eq:5.811}.
\end{enumerate}
The same argument as in the proof of~\cite[Theorem~2.2.3]{Bai1} shows that Eq. \meqref{eq:5.81} (resp. \meqref{eq:5.83},  \meqref{eq:5.85} and \meqref{eq:5.86}) is equivalent to Eq.~\meqref{eq:nvb1} (resp. \meqref{eq:nvb2}, \meqref{eq:nvb3} and \meqref{eq:nvb4}). Furthermore, by Lemma \mref{lem:nva-equi}, $(A, \Delta_{\prec_{A}},\Delta_{\su_{A}})$ is a \nvca.
 Therefore, $(A,\prec_{A},\su_{A},\Delta_{\prec_{A}},\Delta_{\su_{A}})$ is a \nvbia.
\smallskip

\noindent
((b)$\Leftarrow$(c))
Since $(A, \prec_{A},\su_{A} )$ is a noncommutative Novikov algebra,
$(A^*,-R^*_{\ob_A}, L^*_{\succ_A}, R^*_{\prec_A}, -L^*_{\ob_A})$ is  a representation of $(A, \prec_{A},\su_{A} )$ by Example \mref{ex:ap}.  Similarly,  $(A,-R^*_{\ob_{A^*}},L^*_{\succ_{A^*}}, R^*_{\prec_{A^*}}, -L^*_{\ob_{A^*}})$ is  a representation of $(A^*, \prec_{A^*},\su_{A^*} )$. Eqs.~\meqref{eq:5.81}--\meqref{eq:5.812} follow from the above proof.
Then by Lemma~\mref{lem:5.8},
$(A,A^*,-R^*_{\ob_A}, L^*_{\succ_A}, R^*_{\prec_A}, -L^*_{\ob_A},
			-R^*_{\ob_{A^*}}, L^*_{\succ_{A^*}}, R^*_{\prec_{A^*}}, -L^*_{\ob_{A^*}})$ is a matched pair of \nvas.
\end{proof}

\vse

\section{Noncommutative Novikov bialgebras, Novikov Yang-Baxter equations and $\mathcal{O}$-operators}
\mlabel{sec:NYBE}
In this section, we give the notion of the Novikov Yang-Baxter equation (NYBE) in a noncommutative Novikov algebra. Then an antisymmetric solution of the NYBE leads to a triangular noncommutative Novikov bialgebra. As an operator form of the NYBE,  an $\mathcal{O}$-operator  on a \nva is introduced to construct  antisymmetric solutions of the NYBE. 
\subsection{Triangular \nvbias and NYBEs}
\begin{defi}
Let $(A,\prec,\succ)$ be a \nva and $r=\sum\limits_{i} u_{i}\otimes v_{i}\in A\otimes A$.
Define
\begin{equation*}
r_{12}\succ r_{13}:=\sum_{i,j}u_i\succ u_j\otimes v_i\otimes
v_j,\;\;r_{13}\prec r_{23}:=\sum_{i,j}u_i\otimes u_j\otimes
v_i\prec v_j,\;\;r_{23}\ob r_{12}:=\sum_{ij} u_i\otimes u_j\ob v_i\otimes v_j.
\end{equation*}
The equation
$$E(r):=r_{12}\succ r_{13}+r_{13}\prec r_{23}+r_{23}\ob r_{12}=0$$
is called the {\bf Novikov Yang-Baxter equation} (or the {\bf NYBE} in short) in $(A,\prec,\succ)$.
If $E(r)=0$, then we say $r$ is a {\bf solution} of the NYBE in $(A,\prec,\succ)$.
\end{defi}
Thus, one can also give an explicit presentation of the NYBE as follows:
\begin{equation}
E(r)=\sum_{i,j}u_{i}\succ u_{j}\otimes v_{i}\otimes v_{j}+u_{i}\otimes u_{j}\otimes v_{i}\prec v_{j}+u_{i}\otimes u_{j}\ob v_{i}\otimes v_{j}
=0.
\mlabel{eq:nybe}
\end{equation}
\begin{rmk} When the commutativity $x\prec y=y\succ x$ holds, $(A,\prec)$ is a   Novikov algebra. In this case, the NYBE becomes
$$E(r):= r_{13}\prec r_{12} +r_{13}\prec r_{23}+r_{23}\ob r_{12}=0,$$
which is the same as the NYBE in the  Novikov algebra given by \cite[Definition~3.24]{HBG23}.
\end{rmk}

Let $S_3$ be the symmetric group of order 3. We adopt the classical cycle notation with parentheses, such as, $(123)\in S_3$.  The linear map $\sigma_\omega: A\ot A\ot A\to A\ot A\ot A$ induced by $\omega\in S_3$ is given by $\sigma_\omega(a_1\ot a_2\ot a_3):= a_{\omega^{-1}(1)}\ot a_{\omega^{-1}(2)}\ot a_{\omega^{-1}(3)}$ for all $a_1, a_2, a_3\in A$.
\begin{pro}
Let $(A,\prec,\succ)$ be a \nva and $r\in A\otimes A$. If $r$ is an antisymmetric solution of the NYBE in $(A,\prec,\succ)$, then $(A,\prec, \succ,\Delta_{\prec},\Delta_{\succ})$ is a \nvbia, where the comultiplications $\Delta_{\prec},\Delta_{\succ}$ are given by
\begin{equation}\mlabel{eq:comultiplication}
		\Delta_{\prec}(x)=(\mathrm{id}\otimes L_{\ob}(x)+{R}_{\su}(x)\otimes\mathrm{id})(-r),\;
		\Delta_{\succ}(x)=(\mathrm{id}\otimes L_{\prec}(x)+{R}_{\ob}(x)\otimes\mathrm{id})r,\quad x\in A.
\end{equation}
In this case, $(A,\prec, \succ,\Delta_{\prec},\Delta_{\succ})$ is called a {\bf triangular \nvbia}.
\mlabel{pro:antieybe}
\end{pro}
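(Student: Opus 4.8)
The plan is to verify directly that the comultiplications $\Delta_\prec,\Delta_\succ$ defined by \meqref{eq:comultiplication} satisfy the three clusters of identities in Definition~\mref{def:lpdbia}: that $(A,\Delta_\prec,\Delta_\succ)$ is a \nvca (Eqs.~\meqref{eq:exca1}--\meqref{eq:exca2}), and that the compatibility conditions \meqref{eq:nvb1}--\meqref{eq:nvb4} hold. The whole argument rests on the single hypothesis $E(r)=0$ together with the antisymmetry $r = -\sigma(r)$, so the first thing I would do is set up convenient tensor-leg bookkeeping: writing $r=\sum_i u_i\ot v_i$, I would record how $L_\prec(x), L_\succ(x), R_\prec(x), R_\succ(x)$, and their $\ob$-combinations act on each leg of $r$ and of $E(r)\in A^{\ot 3}$, and how applying a map in one slot of $E(r)$ and then reading off a two-tensor reproduces one side of \meqref{eq:nvb1}--\meqref{eq:nvb4}. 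Antisymmetry lets me convert $R_\circ$ acting on one leg of $-r$ into $L_\circ$ acting on the other leg of $r$, which is what makes the $\sigma$-twisted terms in \meqref{eq:nvb3}--\meqref{eq:nvb4} manageable.

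The technical engine is the following: for each $x\in A$, apply an appropriate operator built from $L_\prec(x),L_\succ(x),R_\prec(x),R_\succ(x)$ in one of the three tensor slots of the identity $E(r)=0$, and expand. Each such specialization of $E(r)=0$ yields an identity in $A\ot A$ relating $(\,\cdot\ot\cdot\,)\Delta_\prec(x)$, $(\,\cdot\ot\cdot\,)\Delta_\succ(x)$, $(\,\cdot\ot\cdot\,)\Delta_\ob(x)$ terms; combining a small number of these specializations (and using the Novikov axioms \meqref{eq:nv1}--\meqref{eq:nv2} to rewrite products like $u_i\succ(u_j\prec v_j)$) reproduces exactly \meqref{eq:nvb1}, \meqref{eq:nvb2}, \meqref{eq:nvb3}, \meqref{eq:nvb4} in turn. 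This is precisely the strategy used for triangular Novikov bialgebras in the cocommutative case in~\cite{HBG23} and, before that, for antisymmetric infinitesimal bialgebras in~\cite{Bai1}; the noncommutative refinement only forces me to keep $\prec$ and $\succ$ (and hence $\Delta_\prec$ and $\Delta_\succ$) separate throughout rather than collapsing them under cocommutativity.

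For the coassociativity-type conditions \meqref{eq:exca1}--\meqref{eq:exca2}, I would feed the formulas \meqref{eq:comultiplication} into $(\Delta_\succ\ot\id)\Delta_\prec$ and $(\id\ot\Delta_\prec)\Delta_\succ$, expand everything into sums over triples $u_i\ot v_i, u_j\ot v_j, u_k\ot v_k$, and match the two sides by invoking $E(r)=0$ with an operator inserted in a single slot — effectively differentiating $E(r)=0$ by $R_\succ(x)$ or $L_\prec(x)$ and using antisymmetry to flip legs. One can alternatively use Lemma~\mref{lem:nva-equi}: show that the linear duals $\prec_{A^*},\su_{A^*}$ of $\Delta_\prec,\Delta_\succ$ satisfy \meqref{eq:nv1}--\meqref{eq:nv2}, which is the same computation read on $A^*$. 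Either way the bookkeeping is the bulk of the work.

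The main obstacle is purely organizational rather than conceptual: there are four compatibility identities, each involving $\sigma$-twisted terms, and each must be produced by the right linear combination of slot-wise specializations of the \emph{single} cubic identity $E(r)=0$, repeatedly massaged with the two Novikov axioms. Keeping track of which leg each operator lands on, and correctly using $r=-\sigma(r)$ to turn left multiplications into right multiplications across a transposition, is where errors are easy to make; I would handle \meqref{eq:nvb1} first as the template (it is the closest analogue of the associative-algebra case), then \meqref{eq:nvb2} by the symmetry $\prec\leftrightarrow\succ$ with the roles of the legs exchanged, and finally \meqref{eq:nvb3} and \meqref{eq:nvb4}, which are the genuinely new pair and require combining two specializations each plus antisymmetry. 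Once all of \meqref{eq:exca1}--\meqref{eq:exca2} and \meqref{eq:nvb1}--\meqref{eq:nvb4} are checked, Definition~\mref{def:lpdbia} is satisfied and $(A,\prec,\succ,\Delta_\prec,\Delta_\succ)$ is a \nvbia.
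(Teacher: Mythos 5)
Your proposal is correct and follows essentially the same route as the paper: a direct verification of Eqs.~\meqref{eq:exca1}--\meqref{eq:exca2} and \meqref{eq:nvb1}--\meqref{eq:nvb4} by expanding in $r=\sum_i u_i\ot v_i$, applying permutations ($\sigma_{(123)},\sigma_{(132)}$) and slot-wise operators to $E(r)=0$, and using antisymmetry together with the Novikov axioms \meqref{eq:nv1}--\meqref{eq:nv2}. The only small refinement worth noting is that in the paper's computation Eq.~\meqref{eq:nvb1} (and symmetrically \meqref{eq:nvb2}) actually falls out of the Novikov axioms alone after cancellation, with the NYBE needed only for the coalgebra conditions and the $\sigma$-twisted identities \meqref{eq:nvb3}--\meqref{eq:nvb4}.
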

\begin{proof}
By Eq.~\meqref{eq:comultiplication}, we have
\begin{equation}\mlabel{eq:hx}
\Delta_\ob(x)=(\id\ot L_\succ(x)-R_\prec(x)\ot\id)(-r).
\end{equation}

We first prove that $(A,\Delta_{\prec},\Delta_{\succ})$ is a \nvca by checking Eqs.~\meqref{eq:exca1} and ~\meqref{eq:exca2}.
We will only verify Eq.~\meqref{eq:exca1} --- the verification for Eq.~\meqref{eq:exca2} is similar. For all $x \in A$, we have
\begin{eqnarray*}
	(\Delta_{\succ}\otimes \id)\Delta_{\prec}(x)&=&(\Delta_{\succ}\otimes \id)(\id \otimes L_{\ob}(x)+ R_{\succ}(x) \otimes \id )(-\sum\limits_{i} u_{i} \otimes v_{i})\\
	&=&(\Delta_{\succ}\otimes \id)(\sum\limits_{i} -u_{i} \otimes x \ob v_{i}-u_{i} \succ x \otimes v_{i})\\
	&=&\sum\limits_{i} -\Delta_{\succ} (u_{i}) \otimes x \ob v_{i}- \Delta_{\succ}(u_{i} \succ x) \otimes v_{i}\\
	&=&\sum\limits_{i}-(\id \otimes L_{\prec}(u_{i})+R_{\ob}(u_{i}) \otimes \id)(\sum\limits_{j} u_{j} \otimes v_{j})\otimes x \ob v_{i} \\
	&&-(\id \otimes L_{\prec}(u_{i} \succ x)+R_{\ob}(u_{i}\succ x)\otimes \id)(\sum\limits_{j} u_{j} \otimes v_{j}) \otimes v_{i}\\
	&=&\sum\limits_{i,j}-u_{i}\otimes u_{j}\otimes x \ob (v_{i} \succ v_{j})-u_{i} \otimes u_{j} \succ (x \prec v_{i}) \otimes v_{j} - u_{j} \ob (u_{i}\succ x) \otimes v_{j} \otimes v_{i}\\
&&\text{(by  acting $\sigma_{(132)}$ on the NYBE, the antisymmetry of $r$ and Eq.~\meqref{eq:nv1})}.
\end{eqnarray*}

On the other hand,
\begin{eqnarray*}
	(\id \otimes \Delta_{\prec}) \Delta_{\succ}(x)	&=&(\id \otimes \Delta_{\prec})(\id \otimes L_{\prec}(x)+ R_{\ob}(x) \otimes \id )(\sum\limits_{i} u_{i} \otimes v_{i})\\
	&=&(\id \otimes \Delta_{\prec})(\sum\limits_{i} u_{i} \otimes x \prec v_{i}+u_{i} \ob x \otimes v_{i})\\
	&=&\sum\limits_{i,j}u_{i} \otimes \Delta_{\prec}(x\prec v_{i})+u_{i}\ob x \otimes \Delta_{\prec}(v_{i})\\
	&=&\sum\limits_{i,j} u_{i} \otimes (\id \otimes L_{\ob}(x\prec v_{i})+R_{\succ}(x\prec v_{i})\otimes \id)(-u_{j}\otimes v_{j})\\
	&&+u_{i} \ob x \otimes (\id \otimes L_{\ob}(v_{i}) + R_{\succ}(v_{i})\otimes \id)(-u_{j}\otimes v_{j})\\
	&=&\sum\limits_{i,j}-u_{i}\otimes u_{j}\otimes x\ob (v_{i} \succ v_{j}) -u_{i}\otimes u_{j} \succ (x\prec v_{i})\otimes v_{j}-(v_{i} \prec v_{j})\ob x\ot u_{i}\otimes u_{j}\\
	&&\text{(by acting $\sigma_{(123)}$ on the NYBE, Eq.~\meqref{eq:nv2} and the antisymmetry of $r$)}\\
	&=&\sum\limits_{i,j} -u_{i}\otimes u_{j}\otimes x\ob (v_{i} \succ v_{j}) -u_{i}\otimes u_{j} \succ (x\prec v_{i})\otimes v_{j}- u_{j} \ob (u_{i} \succ x) \otimes v_{j} \otimes v_{i}\\
	&&\text{(by Eq.~\meqref{eq:nv2} and the antisymmetry of $r$)}.
\end{eqnarray*}
Thus, Eq.~\meqref{eq:exca1} holds.

We next prove  Eqs.~\meqref{eq:nvb1}--\meqref{eq:nvb4}.
We only verify Eq.~\meqref{eq:nvb1}, since the others are proved similarly.
For all $x,y \in A$, we have
\begin{eqnarray*}
\Delta_{\prec}(x\prec y)&=&(\id \ot L_{\ob}(x\prec y)+R_{\su}(x\prec y)\ot \id)(-\sum\limits_{i} u_{i} \otimes v_{i})\\
&=&\sum\limits_{i} -u_{i} \otimes(x\prec y)\ob v_i-u_{i} \su (x\prec y)\ot v_i.\\
&&(R_{\prec}(y)\ot \id)\Delta_{\prec}(x)+(\id\ot L_{\ob}(x))\Delta_{\ob}(y)\\
&=&(R_{\prec}(y)\ot \id)(\id \ot L_{\ob}(x)+R_{\su}(x) \ot \id)(-\sum\limits_{i} u_{i} \otimes v_{i})\\
&&+(\id \ot L_{\ob}(x))(\id \ot L_{\su}(y)-R_{\prec}(y)\ot \id)(-\sum\limits_{i} u_{i} \otimes v_{i})\\
&=&\sum\limits_{i}-u_i\prec y \ot x\ob v_i-(u_i\su x)\prec y \ot v_i -u_i\ot x\ob(y\su v_i)+u_i \prec y \ot x\ob v_i\\
&=&\sum\limits_{i}-u_i\su (x\prec y)\ot v_i-u_i\ot (x\prec y)\ob v_i.
\end{eqnarray*}
Thus, Eq.~\meqref{eq:nvb1} holds. This completes the proof. 
\end{proof}

\subsection{$\mathcal{O}$-operators of  \nvas and NYBEs}
We now give the notion of an $\mathcal{O}$-operator of  \nvas.
\begin{defi}\mlabel{defi:ope}
Let $(A,\prec,\succ)$ be a \nva and  $(V,\ell_\prec, r_\prec, \ell_\su, r_\su)$ be a representation. A linear map
$T: V\to A$ is called an {\bf $\mathcal{O}$-operator} of $(A,\prec,\succ)$  associated  to $(V,\ell_\prec, r_\prec, \ell_\su, r_\su)$, if $T$ satisfies
\begin{equation}
T(u)\ast T(v)=T(\ell_\ast(T(u))v+r_\ast(T(v))u),\quad \ast\in \{\prec,\su\},\;  u,v\in V.
\mlabel{eq:ope}
\end{equation}
\end{defi}

\begin{pro}\mlabel{pro:oelda}
Let $(A,\prec,\succ)$ be a \nva and $r\in A\otimes A$ be antisymmetric.
Then the following statements are equivalent:
\begin{enumerate}
\item \mlabel{it:eybe}
$r$ is a solution of the NYBE in $(A,\prec,\succ)$.
\item \mlabel{it:ope}
${r}$ is an $\mathcal{O}$-operator of $(A,\prec,\succ)$ associated to
$(A^*, -{R}^{*}_{\ob}, {L}^{*}_{\succ}, {R}^{*}_{\prec}, -{L}^{*}_{\ob})$.
\end{enumerate}
\end{pro}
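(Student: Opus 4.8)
The plan is to identify $r\in A\otimes A$ with a linear map $r^\sharp: A^*\to A$ via the natural pairing, writing $r=\sum_i u_i\otimes v_i$ so that $r^\sharp(a^*)=\sum_i\langle a^*,u_i\rangle v_i$; because $r$ is antisymmetric we also have $r^\sharp(a^*)=-\sum_i\langle a^*,v_i\rangle u_i$. The statement then asks us to compare two tensor-equation identities on $A\otimes A\otimes A$: the NYBE $E(r)=0$, and the $\mathcal{O}$-operator condition for $T=r^\sharp$ associated to the coadjoint-type representation $(A^*,-R^*_\ob,L^*_\succ,R^*_\prec,-L^*_\ob)$ (which is a representation by Example~\ref{ex:ap}). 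First I would record explicitly, for $\ast\in\{\prec,\succ\}$, what Eq.~\eqref{eq:ope} says in this case: unravelling $\ell_\prec=-R^*_\ob$, $r_\prec=L^*_\succ$, $\ell_\succ=R^*_\prec$, $r_\succ=-L^*_\ob$, the condition $r^\sharp(a^*)\ast r^\sharp(b^*)=r^\sharp\big(\ell_\ast(r^\sharp(a^*))b^*+r_\ast(r^\sharp(b^*))a^*\big)$ becomes a single equation in $A$ depending on two arbitrary covectors $a^*,b^*$.

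The core of the proof is then a pairing computation: test both sides of each candidate identity against an arbitrary $c^*\in A^*$ in the appropriate tensor slot, and show that the resulting scalar identities are term-by-term the same as the three scalar identities one gets by pairing $E(r)=0$ against $a^*\otimes b^*\otimes c^*$ (and its $S_3$-images). Concretely, pairing $E(r)$ with $a^*\otimes b^*\otimes c^*$ gives
\[
\sum_{i,j}\langle a^*,u_i\succ u_j\rangle\langle b^*,v_i\rangle\langle c^*,v_j\rangle
+\langle a^*,u_i\rangle\langle b^*,u_j\rangle\langle c^*,v_i\prec v_j\rangle
+\langle a^*,u_i\rangle\langle b^*,u_j\ob v_i\rangle\langle c^*,v_j\rangle=0,
\]
and each of the three sums can be rewritten, using the definition of the dual maps $L^*,R^*$ and the antisymmetry of $r$, as a pairing of $c^*$ (or $b^*$) with one of the three terms appearing in the $\mathcal{O}$-operator relation for $\ast=\prec$ and $\ast=\succ$. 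The direction \eqref{it:ope}$\Rightarrow$\eqref{it:eybe} is then immediate by reversing these manipulations, and \eqref{it:eybe}$\Rightarrow$\eqref{it:ope} follows because the pairing against all $c^*$ determines the element of $A$. I would present one of the two operations, say $\ast=\succ$, in full and remark that $\ast=\prec$ is entirely analogous (the $\ob=\prec+\succ$ term is what links the two, so one should be slightly careful to check that the $\prec$ and $\succ$ halves of the $\mathcal{O}$-operator condition together are equivalent to $E(r)=0$, not each separately).

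The main obstacle I anticipate is purely bookkeeping: keeping the index relabellings straight when using antisymmetry $\sum_i u_i\otimes v_i=-\sum_i v_i\otimes u_i$ to move a factor from the "$u$" slot to the "$v$" slot, since each of the three terms in $E(r)$ has its two tensor legs distributed differently among $u$'s and $v$'s, and the $\mathcal{O}$-operator relation mixes $r^\sharp$ applied to $a^*$ and to $b^*$. A clean way to manage this is to fix once and for all the convention $T(a^*)=r^\sharp(a^*)=\sum_i\langle a^*,u_i\rangle v_i$ and to compute $\langle \xi^*, \ell_\ast(T(a^*))b^*\rangle$ and $\langle \xi^*, r_\ast(T(b^*))a^*\rangle$ directly from the definitions in the Notations section, then match against the expanded NYBE. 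No genuinely new idea is needed beyond this dictionary; the argument is the noncommutative-Novikov analogue of the standard equivalence between (anti)symmetric solutions of a Yang–Baxter-type equation and $\mathcal{O}$-operators associated to the (co)adjoint representation, e.g.\ as in~\cite{Bai1}.
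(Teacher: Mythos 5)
Your proposal is correct and follows essentially the same route as the paper's proof: identify $r$ with the map $a^*\mapsto\sum_i\langle a^*,u_i\rangle v_i$, unpack the $\mathcal{O}$-operator condition via the dictionary $\ell_\prec=-R^*_\ob$, $r_\prec=L^*_\succ$, $\ell_\succ=R^*_\prec$, $r_\succ=-L^*_\ob$, and compare with $E(r)=0$ by pairing against $a^*\otimes b^*\otimes c^*$ using antisymmetry and a cyclic permutation. The only (harmless) difference is your caution about the two halves of Eq.~\eqref{eq:ope} only being \emph{jointly} equivalent to $E(r)=0$: the paper's computation shows each of the $\prec$- and $\succ$-halves is in fact separately equivalent to $E(r)=0$ after acting by $\sigma_{(123)}$.
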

\begin{proof}
 By Example~\mref{ex:ap}, $(A^*, -{R}^{*}_{\ob}, {L}^{*}_{\succ}, {R}^{*}_{\prec}, -{L}^{*}_{\ob})$ is a representation of \nva. For all $a^*,b^*,c^* \in A^*$, we have
$$ \langle r(a^*) \succ r(b^*),c^* \rangle =\langle (\sum\limits_{i}\langle a^*,u_{i}\rangle v_{i}) \succ (\sum\limits_{j}\langle b^*,u_{j}\rangle v_{j}),c^* \rangle
=\langle \sum\limits_{i,j} u_{i} \otimes u_{j} \otimes v_{i} \succ v_{j}, a^*\otimes b^*\otimes c^* \rangle.
$$
\begin{eqnarray*}
\langle r(R_{\prec}^*(r(a^*))b^*),c^* \rangle &=& \langle \sum\limits_{i} \langle R_{\prec}^*(r(a^*))b^*, u_{i} \rangle v_{i} ,c^* \rangle\\
&=& \sum\limits_{i} \langle b^*, u_{i} \prec r(a^*) \rangle \langle v_{i},c^* \rangle\\
&=& \sum\limits_{i} \langle b^*, u_{i} \prec (\sum\limits_{j} \langle a^*,u_{j} \rangle v_{j}) \rangle \langle v_{i},c^* \rangle\\
&=& \sum\limits_{i,j} \langle a^*,u_{j} \rangle \langle b^*,u_{i}
\prec v_{j} \rangle \langle c^*, v_{i} \rangle\\
&=& \langle \sum\limits_{i,j} u_{j} \otimes u_{i} \prec v_{j} \otimes v_{i} ,a^*\otimes b^*\otimes c^* \rangle.
\end{eqnarray*}
\begin{eqnarray*}
\langle r(-L_{\ob}^*(r(b^*))a^*),c^* \rangle &=& -\langle \sum\limits_{i} \langle L_{\ob}^*(r(b^*))a^*, u_{i} \rangle v_{i} ,c^* \rangle\\
&=& -\sum\limits_{i} \langle a^*,  r(b^*) \ob u_{i} \rangle \langle v_{i},c^* \rangle\\
&=& -\sum\limits_{i} \langle a^*,(\sum\limits_{j} \langle b^*,u_{j} \rangle v_{j}) \ob u_{i} \rangle \langle v_{i},c^* \rangle\\
&=& -\sum\limits_{i,j} \langle b^*,u_{j} \rangle \langle a^*,v_{j}
\ob u_{i} \rangle \langle c^*, v_{i} \rangle\\
&=& -\langle \sum\limits_{i,j} v_{j} \ob u_{i} \otimes u_{j}  \otimes v_{i} ,a^*\otimes b^*\otimes c^* \rangle.
\end{eqnarray*}
Thus,
 $r(a^*) \succ r(b^*)=r(R_{\prec}^*(r(a^*))b^*-L_{\ob}^*(r(b^*))a^* )$ if and only if
\begin{eqnarray*}
 \sum\limits_{i,j} u_{i} \otimes u_{j} \otimes v_{i} \succ v_{j}= \sum\limits_{i,j} u_{j} \otimes u_{i} \prec v_{j} \otimes v_{i}-v_{j} \ob u_{i} \otimes u_{j}  \otimes v_{i},
\end{eqnarray*}
 which, by the antisymmetry of $r$ and the action of $\sigma_{(123)}$, is equivalent to
$$ \sum\limits_{i,j} u_{i} \succ u_{j} \otimes v_{i} \otimes v_{j} +u_{i} \otimes u_{j} \otimes v_{i} \prec v_{j} + u_{i} \otimes u_{j} \ob v_{i} \otimes v_{j}=0.$$
By the same argument, we prove that
$r(a^*) \prec r(b^*)=r(-R_{\ob}^*(r(a^*))b^*+L_{\succ}^*(r(b^*))a^* )$ if and only if
\begin{eqnarray*}
\sum\limits_{i,j} u_{i} \otimes u_{j} \otimes v_{i} \prec v_{j}= -\sum\limits_{i,j} u_{j} \otimes u_{i} \ob v_{j} \otimes v_{i}+v_{j} \succ u_{i} \otimes u_{j}  \otimes v_{i},
\end{eqnarray*}
which is equivalent to
$$
\sum\limits_{i,j} u_{i} \otimes u_{j} \otimes v_{i} \prec v_{j}+\sum\limits_{i,j} u_{i} \otimes u_{j} \ob v_{i} \otimes v_{j}+u_{i} \succ u_{j} \otimes v_{i}  \otimes v_{j}=0.$$
Now the proof is completed.
\end{proof}

\begin{thm}\mlabel{thm:5.8}Let  $A:=(A,\prec,\succ)$ be a \nva and $(V,\ell_\prec,r_\prec,\ell_\su,r_\su)$ be a representation of  $A$. Let $T:V\rightarrow A$ be a linear map identified as a $2$-tensor  $r_T\in V^*\ot A\subseteq (A\ltimes_{\ell^*_{\su},-\ell^*_\ob}^{-r^*_\ob,r^*_\prec}V^*)\otimes
(A\ltimes_{\ell^*_{\su},-\ell^*_\ob}^{-r^*_\ob,r^*_\prec}V^*).$  Then $r:=r_T-\sigma(r_T)$ is an antisymmetric solution of  the NYBE in the semi-direct product  \nva $A\ltimes_{\ell^*_{\su},-\ell^*_\ob}^{-r^*_\ob,r^*_\prec}V^*$ defined by Proposition~\mref{pro:ld-rep}, if and only if $T$ is an $\mathcal{O}$-operator of  $(A,\prec,\succ)$ associated to $(V,\ell_\prec,r_\prec,\ell_\su,r_\su)$.
\end{thm}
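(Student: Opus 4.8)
The strategy is to realize $r$ as an $\mathcal{O}$-operator of a suitable semi-direct product and then match components. Set $\hat{A}:=A\ltimes_{\ell^*_{\su},-\ell^*_\ob}^{-r^*_\ob,r^*_\prec}V^*$. By Proposition~\ref{pro:dg}, $(V^*,-r^*_\ob,\ell^*_\succ,r^*_\prec,-\ell^*_\ob)$ is a representation of $A$, so by Proposition~\ref{pro:ld-rep} $\hat{A}$ is a \nva with underlying space $A\oplus V^*$, in which $A$ is a subalgebra and $V^*$ is an abelian ideal. Since $\sigma(r)=-r$ automatically, $r=r_T-\sigma(r_T)\in\hat{A}\ot\hat{A}$ is antisymmetric, so Proposition~\ref{pro:oelda}, applied to $\hat{A}$ and its coadjoint representation on $\hat{A}^*$ (Example~\ref{ex:ap}), shows that $r$ is a solution of the NYBE in $\hat{A}$ if and only if $r$, regarded as a linear map $\hat{A}^*\to\hat{A}$, is an $\mathcal{O}$-operator of $\hat{A}$ associated to $(\hat{A}^*,-R^*_{\ob_{\hat A}},L^*_{\succ_{\hat A}},R^*_{\prec_{\hat A}},-L^*_{\ob_{\hat A}})$. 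It therefore suffices to prove that this last condition is equivalent to $T$ being an $\mathcal{O}$-operator of $A$ associated to $(V,\ell_\prec,r_\prec,\ell_\su,r_\su)$.

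To do so, I would first make the map $r\colon\hat{A}^*=A^*\oplus V\to\hat{A}=A\oplus V^*$ explicit: writing $r_T=\sum_j e^*_j\ot T(e_j)$ for dual bases $\{e_j\}$ of $V$ and $\{e^*_j\}$ of $V^*$, one checks that $r(a^*+u)=T(u)-T^*(a^*)$, where $T^*\colon A^*\to V^*$ is the transpose of $T$, so that $\langle T^*(a^*),u\rangle=\langle a^*,T(u)\rangle$. Then one expands the $\mathcal{O}$-operator identity
\[
r(\xi)\ast_{\hat A}r(\eta)=r\bigl(\hat\ell_\ast(r(\xi))\,\eta+\hat r_\ast(r(\eta))\,\xi\bigr),\qquad \xi,\eta\in\hat{A}^*,\ \ast\in\{\prec,\su\},
\]
where $(\hat\ell_\prec,\hat r_\prec,\hat\ell_\su,\hat r_\su)=(-R^*_{\ob_{\hat A}},L^*_{\succ_{\hat A}},R^*_{\prec_{\hat A}},-L^*_{\ob_{\hat A}})$, by letting $(\xi,\eta)$ range over the four pairs built from $A^*$ and $V$, using the explicit products of $\hat{A}$ from Propositions~\ref{pro:ld-rep} and~\ref{pro:dg}. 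A short computation gives $-R^*_{\ob_{\hat A}}(T(u))\,v=\ell_\prec(T(u))v$ and $L^*_{\succ_{\hat A}}(T(v))\,u=r_\prec(T(v))u$, together with their $\su$-analogues, so the case $(\xi,\eta)=(u,v)\in V\times V$ collapses precisely to
\[
T(u)\ast T(v)=T\bigl(\ell_\ast(T(u))v+r_\ast(T(v))u\bigr),\qquad\ast\in\{\prec,\su\},
\]
the defining equation of an $\mathcal{O}$-operator for $T$; this already gives the ``only if'' direction. For ``if'', assuming $T$ is an $\mathcal{O}$-operator, the case $(a^*,b^*)\in A^*\times A^*$ reduces to $0=0$ (products inside the abelian ideal $V^*$ vanish, and the relevant coadjoint actions of $V^*$-elements on $A^*$ vanish), while the two mixed cases $(a^*,v)$ and $(u,b^*)$ reduce---after transposing the $\mathcal{O}$-operator identity for $T$ and invoking the representation axioms~\eqref{eq:r1}--\eqref{eq:r6} and $\langle T^*(a^*),u\rangle=\langle a^*,T(u)\rangle$---to identities that hold automatically. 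By bilinearity in $(\xi,\eta)$, the four cases together give the claim.

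The step I expect to be the main obstacle is the bookkeeping for the two mixed cases $(a^*,v)$ and $(u,b^*)$: there one side of the $\mathcal{O}$-operator identity involves a mixed product $A\cdot_{\hat A}V^*$ or $V^*\cdot_{\hat A}A$ in the semi-direct product and the other a coadjoint $\hat A$-action on $\hat{A}^*$, both taking values in $V^*$, so one must unwind them carefully in terms of the dual-representation data of Proposition~\ref{pro:dg} and the transpose $T^*$ to see that they agree precisely because $T$ is an $\mathcal{O}$-operator. By contrast, the pure-$V$ case is the conceptual heart but is short, and the pure-$A^*$ case is immediate from the ideal structure.
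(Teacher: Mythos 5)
Your proposal is correct, and it takes a genuinely different route from the paper. The paper proves the theorem by brute force: it writes $r=\sum_i(e_i^*\ot T(e_i)-T(e_i)\ot e_i^*)$, expands the three terms $r_{12}\succ' r_{13}$, $r_{13}\prec' r_{23}$, $r_{23}\ob' r_{12}$ of $E(r)$ in the semi-direct product, and shows that the vanishing of their sum splits into three tensor identities, each of which is then converted (via the dual basis) into the defining identity of an $\mathcal{O}$-operator for $T$. You instead reuse Proposition~\mref{pro:oelda} applied to $\hat A:=A\ltimes_{\ell^*_{\su},-\ell^*_\ob}^{-r^*_\ob,r^*_\prec}V^*$ to trade the NYBE for the statement that $r^\sharp:\hat A^*\to\hat A$, $a^*+u\mapsto T(u)-T^*(a^*)$, is an $\mathcal{O}$-operator with respect to the coadjoint representation of $\hat A$, and then check the identity on the four types of pairs in $(A^*\oplus V)^{\times 2}$. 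I verified the key component computations you rely on: indeed $-R^*_{\ob_{\hat A}}(T(u))v=\ell_\prec(T(u))v$, $L^*_{\succ_{\hat A}}(T(v))u=r_\prec(T(v))u$, $R^*_{\prec_{\hat A}}(T(u))v=\ell_\succ(T(u))v$ and $-L^*_{\ob_{\hat A}}(T(v))u=r_\succ(T(v))u$, so the pure-$V$ case is verbatim the $\mathcal{O}$-operator identity for $T$ (giving ``only if'' at once); the pure-$A^*$ case is $0=0$; and each mixed case, after pairing with a basis vector of $V$ and transposing through $T^*$, reduces to the $\succ$- or $\ob$-form of the $\mathcal{O}$-operator identity for $T$, so it holds in the ``if'' direction (your phrase ``hold automatically'' should be read as ``hold automatically given the $\mathcal{O}$-operator identity for $T$'' --- they are not consequences of the representation axioms alone). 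What your route buys is conceptual economy and a structured case analysis in which only one case carries content; what it costs is that the mixed cases still require the same kind of careful unwinding of coadjoint actions on the semi-direct product that the paper's tensor computation performs, plus an implicit use of finite-dimensionality in identifying $\hat A^*$ with $A^*\oplus V$ (which the paper assumes globally in any case).
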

\begin{proof}
Let $\{e_{1},\cdots,e_{n}\}$ be a basis of $V$ and let $\{e^*_{1},\cdots,e^*_{n}\}$ be its dual basis. Since $T$ can be regarded as an element in $V^*\ot A$, we write $r_T=\sum\limits_{i=1}^{n} e_{i}^* \ot T(e_{i})$. Thus, $r=r_T-\sigma(r_T)=\sum\limits_{i=1}^{n} (e_{i}^* \ot T(e_{i}) -  T(e_{i}) \ot e_{i}^*)$. So we have
\begin{eqnarray*}
r_{12} \succ' r_{13} &=&
\sum\limits_{i,j} e_{i}^* \succ' e_{j}^* \ot T(e_{i}) \ot T(e_{j}) -  e_{i}^*\succ' T(e_{j}) \ot T(e_{i}) \ot e_{j}^* \\
&&- T(e_{i})  \succ' e_{j}^* \ot e_{i}^* \ot T(e_{j}) +  T(e_{i}) \succ'T(e_{j}) \ot e_{i}^*\ot e_{j}^* \\
&=&
\sum\limits_{i,j} \ell_{\ob}^* (T(e_{j}))e_{i}^*   \ot T(e_{i}) \ot e_{j}^* -  r_{\prec}^*(T(e_{i}))  (e_{j}^*) \ot e_{i}^* \ot T(e_{j})  +T(e_{i}) \succ T(e_{j}) \ot e_{i}^* \ot e_{j}^*.\\
r_{13} \prec' r_{23} &=&
\sum\limits_{i,j} e_{i}^* \ot e_{j}^* \ot T(e_{i}) \prec'T(e_{j}) -  e_{i}^* \ot T(e_{j}) \ot   T(e_{i}) \prec' e_{j}^* \\
&&- T(e_{i}) \ot e_{j}^* \ot  e_{i}^* \prec'T(e_{j}) +  T(e_{i}) \ot T(e_{j})  \ot  e_{i}^*\prec' e_{j}^* \\
&=&\sum\limits_{i,j} e_{i}^* \ot e_{j}^* \ot T(e_{i}) \prec T(e_{j})  + e_{i}^* \ot T(e_{j}) \ot  r_{\ob}^*(T(e_{i}))  (e_{j}^*)    -T(e_{i})\ot e_{j}^*\ot \ell_{\succ}^* (T(e_{j})) e_{i}^* .\\
r_{23} \ob' r_{12} &=&
\sum\limits_{i,j} e_{i}^* \ot e_{j}^* \ob' T(e_{i})\ot T(e_{j})    - e_{i}^* \ot T(e_{j}) \ob' T(e_{i})  \ot   e_{j}^* \\
&&- T(e_{i}) \ot e_{j}^* \ob'  e_{i}^*\ot T(e_{j}) +  T(e_{i}) \ot T(e_{j})  \ob'  e_{i}^*\ot e_{j}^* \\
&=&\sum\limits_{i,j}  e_{i}^*  \ot  (\ell_{\succ}^*-\ell_{\ob}^*)(T(e_{i})) e_{j}^* \ot T(e_{i})  -e_{i}^*\ot T(e_{j}) \ob T(e_{i})\ot e_{j}^* \\
&&+ T(e_{i}) \ot (r_{\prec}^*-r_{\ob}^*) (T(e_{j})) e_{i}^* \ot e_{j}^*.
\end{eqnarray*}
Then  $r$ is an antisymmetric solution of the NYBE in  $(A\ltimes_{\ell^*_{\su},-\ell^*_\ob}^{-r^*_\ob,r^*_\prec}V^*,\succ',\prec')$, if and only if
\begin{eqnarray*}
&&\sum_{i,j} e_{i}^* \ot e_{j}^* \ot T(e_{i}) \prec T(e_{j})  + e_{i}^* \ot T(e_{j}) \ot  r_{\ob}^*(T(e_{i}))  (e_{j}^*)    -T(e_{i})\ot e_{j}^*\ot \ell_{\succ}^* (T(e_{j})) e_{i}^*\\
 &&+e_{i}^*  \ot  (\ell_{\succ}^*-\ell_{\ob}^*)(T(e_{i})) e_{j}^* \ot T(e_{i})  -e_{i}^*\ot T(e_{j}) \ob T(e_{i})\ot e_{j}^* + T(e_{i}) \ot (r_{\prec}^*-r_{\ob}^*) (T(e_{j})) e_{i}^* \ot e_{j}^*\\
&&+\ell_{\ob}^* (T(e_{j}))e_{i}^*   \ot T(e_{i}) \ot e_{j}^* -  r_{\prec}^*(T(e_{i}))  (e_{j}^*) \ot e_{i}^* \ot T(e_{j})  +T(e_{i}) \succ T(e_{j}) \ot e_{i}^* \ot e_{j}^* =0.
\end{eqnarray*}
This is equivalent to
\begin{eqnarray}
&&\sum\limits_{i,j}
e_{i}^* \ot e_{j}^* \ot T(e_{i}) \prec T(e_{j})-r_{\prec}^*(T(e_{i}))e_{j}^*\ot e_{i}^*\ot T(e_{j})
-e_{i}^*  \ot  \ell_{\prec}^*(T(e_{i})) e_{j}^* \ot T(e_{i})=0,
\mlabel{eq:equ1}\\
&&\sum\limits_{i,j}
T(e_{i}) \succ T(e_{j}) \ot e_{i}^* \ot e_{j}^*
-T(e_{i})\ot e_{j}^*\ot \ell_{\succ}^* (T(e_{j})) e_{i}^*
- T(e_{i}) \ot r_{\succ}^* (T(e_{j}))e_{i}^* \ot e_{j}^*=0,
\mlabel{eq:equ2}\\
&&\sum\limits_{i,j} \ell_{\ob}^* (T(e_{j}))e_{i}^*\ot T(e_{i})\ot e_{j}^*
+e_{i}^* \ot T(e_{j})\ot r_{\ob}^*(T(e_{i})) e_{j}^*
-e_{i}^*\ot T(e_{j}) \ob T(e_{i})\ot e_{j}^* =0.
\mlabel{eq:equ3}
\end{eqnarray}
Write
\begin{eqnarray*}
r_{\prec}^* (T(e_{i}))e_{j}^*=\sum\limits_{k} \langle r_{\prec}^* (T(e_{i}))e_{j}^*,e_{k} \rangle e_{k}^*,\quad
\ell_{\prec}^*(T(e_{i}))e_j^*=\sum\limits_{k}\langle \ell_{\prec}^*(T(e_{i}))e_{j}^*,e_{k} \rangle e_{k}^*.
\end{eqnarray*}
Then we obtain
\begin{eqnarray*}
	&& \sum\limits_{i,j}-r_{\prec}^*(T(e_{i}))e_{j}^* \ot e_i^* \ot T(e_j) - e_{i}^* \ot \ell_{\prec}^* (T(e_{i}))e_{j}^*\ot  T(e_{i}) \\
	&=& \sum\limits_{i,j,k}-\langle r_{\prec}^*(T(e_{i}))e_{j}^*,e_{k} \rangle e_k ^*\ot e_i^* \ot T(e_j) -e_{i}^*\ot \langle \ell^*_{\prec}(T(e_{i}))e_{j}^*,e_{k}\rangle e_k ^*\ot T(e_{j})  \\
	&=&\sum\limits_{i,j,k}-\langle e_{j}^*, r_{\prec}(T(e_{i}))e_{k} \rangle e_k ^*\ot e_i^* \ot T(e_j) -e_{i}^*\ot \langle e_{j}^* ,\ell_{\prec}(T(e_{i}))e_{k}\rangle e_k ^*\ot T(e_{j}) \\
	&=&\sum\limits_{i,k}- e_{k}^*\ot e_{i}^*\ot T(\sum\limits_{j}\langle e_{j}^*,r_{\prec}(T(e_{i}))e_k \rangle e_j)- e_{i}^*\ot e_{k}^* \ot  T(\sum\limits_{j}\langle e_{j}^*,\ell_{\prec}(T(e_{i}))e_k \rangle e_j) \\
	&=&-\sum\limits_{i,k}e_{k}^*\ot e_{i}^* \ot  T(r_{\prec}(T(e_{i}))e_k )-\sum\limits_{i,k} e_{i}^*\ot e_{k}^* \ot
	T(\ell_{\prec}(T(e_{i}))e_k )\\
	&=&-\sum\limits_{i,j}e_{i}^*\ot e_{j}^* \ot  T(r_{\prec}(T(e_{j}))e_i)-\sum\limits_{i,j} e_{i}^*\ot e_{j}^* \ot
	T(\ell_{\prec}(T(e_{i}))e_j)\\
	&=&\sum\limits_{i,j} e_{i}^*\ot e_{j}^* \ot
	T(-\ell_{\prec}(T(e_{i}))e_j- r_{\prec}(T(e_{j}))e_i).
\end{eqnarray*}
Then Eq.~\meqref{eq:equ1} holds if and only if
\begin{eqnarray*}
	T(e_{i}) \prec T(e_{j})=T(\ell_{\prec}(T(e_{i}))e_{j}
	+r_{\prec}(T(e_{j}))e_{i}).
\end{eqnarray*}
Analogously,  we can show that Eq.~\meqref{eq:equ2} holds if and only if
\begin{eqnarray*}
	T(e_{i}) \succ T(e_{j})= T(\ell_{\succ}(T(e_{i}))e_{j}
	+r_{\succ}(T(e_{j}))e_{i}).
\end{eqnarray*}
By $\ob=\prec+\succ$, we obtain
\begin{eqnarray*}
T(e_{i}) \ob T(e_{j})=T(\ell_{\ob}(T(e_{i}))e_{j}
+r_{\ob}(T(e_{j}))e_{i}),
\end{eqnarray*}
which is equivalent to Eq.~\meqref{eq:equ3}.
\end{proof}

\section{Noncommutative pre-Novikov algebras, and quasi-Frobenius \nvas}
\mlabel{sec:quasiFor}
In this section,  the notion of a \pnva is introduced. Then we show that an $\mathcal{O}$-operator of \nvas produces
 a \pnva. On the other hand, a \pnva naturally gives rise to an $\mathcal{O}$-operator on the associated \nva. Moreover, the concept of a quasi-Frobenius noncommutative Novikov algebra is given in order to
  characterize  antisymmetric nondegenerate solutions of the NYBE.

\subsection{Noncommutative pre-Novikov algebras and $\mathcal{O}$-operators}
Pre-Novikov algebras, arising from the operadic splitting of Novikov algebra operads, were investigated in ~\cite{HBG23,KMS24,LH24}. We establish their noncommutative counterparts.
\begin{defi}
A {\bf \pnva} is a quintuple $(A,\lhd_1, \lhd_2,\rhd_1, \rhd_2)$, where $A$ is a vector space and $\lhd_1, \lhd_2,\rhd_1, \rhd_2: A\ot A\to A$ are binary operations satisfying the following axioms, for all $x, y, z\in A$:  
\begin{align}
(x\su y)\lhd_1 z &= x \rhd_1(y\lhd_1 z),\mlabel{eq:pnva1}\\
(x\rhd_1 y)\lhd_2 z &= x \rhd_1(y\lhd_2 z),\mlabel{eq:pnva2}\\
(x\rhd_2 y)\lhd_2 z &= x \rhd_2(y\prec z),\mlabel{eq:pnva3}\\
(x\prec y)\lhd_1 z +(x\prec y)\rhd_1 z &=x \lhd_1(y\rhd_1 z)+ x \rhd_1(y\rhd_1 z),\mlabel{eq:pnva4}\\
(x\lhd_1 y)\lhd_2 z +(x\lhd_1 y)\rhd_2 z &=x \lhd_1(y\rhd_2 z)+ x \rhd_1(y\rhd_2 z),\mlabel{eq:pnva5}\\
(x\lhd_2 y)\lhd_2 z +(x\lhd_2 y)\rhd_2 z &=x \lhd_2(y\su z)+ x \rhd_2(y\su z),\mlabel{eq:pnva6}
\end{align}
where $\prec:=\lhd_1+\lhd_2$ , $\su:=\rhd_1+\rhd_2$.
\end{defi}

\begin{pro}
	Let $(A,\lhd_1, \lhd_2,\rhd_1, \rhd_2)$ be a \pnva. The binary operations
\begin{align}
	x\prec y&:=x\lhd_1 y+ x\lhd_2 y, \mlabel{eq:pnvab1}\\
	x\succ y&:=x\rhd_1 y+ x\rhd_2 y,\quad x,y\in A.\mlabel{eq:pnvab2}
\end{align}
	defines a \nva, which is called the {\bf associated \nva} of $(A,\lhd_1, \lhd_2,\rhd_1, \rhd_2)$. Furthermore,
    $(A,L_{\lhd_1},R_{\lhd_2},L_{\rhd_1}, R_{\rhd_2})$ is a representation of $(A,\prec,\su)$. Conversely, let A be a vector space equipped with binary operations $\lhd_1, \lhd_2,\rhd_1, \rhd_2$. If $(A,\prec,\su)$ defined by Eqs.~\meqref{eq:pnvab1} and~\meqref{eq:pnvab2} is a \nva, and $(A,L_{\lhd_1},R_{\lhd_2},L_{\rhd_1}, R_{\rhd_2})$ is a representation of $(A,\prec,\su)$, then $(A,\lhd_1,\lhd_2,\rhd_1,\rhd_2)$ is a \pnva.
\end{pro}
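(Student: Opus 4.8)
The plan is to observe that, under the identification $\ell_\prec=L_{\lhd_1}$, $r_\prec=R_{\lhd_2}$, $\ell_\succ=L_{\rhd_1}$, $r_\succ=R_{\rhd_2}$, the six axioms \eqref{eq:pnva1}--\eqref{eq:pnva6} of a \pnva become, after evaluating each operator identity on an arbitrary element of $A$ and up to a harmless relabeling of variables, term-by-term \emph{identical} to the six representation axioms \eqref{eq:r1}--\eqref{eq:r6} of Definition~\ref{defi:5.3}, the $i$-th of one list matching the $i$-th of the other. The single delicate point is that, for this particular quintuple, $\ell_\ob:=\ell_\prec+\ell_\succ=L_{\lhd_1}+L_{\rhd_1}$ and $r_\ob:=r_\prec+r_\succ=R_{\lhd_2}+R_{\rhd_2}$, so that $\ell_\ob(x)z=x\lhd_1 z+x\rhd_1 z$ and $r_\ob(x)z=z\lhd_2 x+z\rhd_2 x$; these are exactly the partial sums that occur on the two sides of \eqref{eq:pnva4}--\eqref{eq:pnva6}, and must not be confused with the left/right multiplications $L_\ob,R_\ob$ by the full product $\ob=\prec+\succ$. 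Setting up this dictionary carefully is the only real bookkeeping in the argument.

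Granting the dictionary, the converse implication is immediate: if $(A,\prec,\succ)$ is a \nva and $(A,L_{\lhd_1},R_{\lhd_2},L_{\rhd_1},R_{\rhd_2})$ is a representation of it, then \eqref{eq:r1}--\eqref{eq:r6} hold, hence \eqref{eq:pnva1}--\eqref{eq:pnva6} hold, so $(A,\lhd_1,\lhd_2,\rhd_1,\rhd_2)$ is a \pnva. For the forward implication, besides reading off the representation axioms from \eqref{eq:pnva1}--\eqref{eq:pnva6} via the dictionary, one must still check that the operations $\prec,\succ$ of \eqref{eq:pnvab1}--\eqref{eq:pnvab2} satisfy the two \nva axioms \eqref{eq:nv1}--\eqref{eq:nv2}. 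For \eqref{eq:nv1} I would expand $(x\su y)\prec z=(x\su y)\lhd_1 z+(x\su y)\lhd_2 z$, split $x\su y=x\rhd_1 y+x\rhd_2 y$ in the second summand, apply \eqref{eq:pnva1}, \eqref{eq:pnva2} and \eqref{eq:pnva3}, and use bilinearity in the second argument to recollect $y\lhd_1 z+y\lhd_2 z=y\prec z$, arriving at $x\rhd_1(y\prec z)+x\rhd_2(y\prec z)=x\su(y\prec z)$. For \eqref{eq:nv2} I would expand $(x\prec y)\ob z$ into its pieces grouped as $\big[(x\prec y)\lhd_1 z+(x\prec y)\rhd_1 z\big]+\big[(x\lhd_1 y)\lhd_2 z+(x\lhd_1 y)\rhd_2 z\big]+\big[(x\lhd_2 y)\lhd_2 z+(x\lhd_2 y)\rhd_2 z\big]$, apply \eqref{eq:pnva4}, \eqref{eq:pnva5} and \eqref{eq:pnva6} to the three brackets respectively, and recollect second arguments into $y\rhd_1 z+y\rhd_2 z=y\su z$, arriving at $x\prec(y\su z)+x\su(y\su z)=x\ob(y\su z)$. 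Thus $(A,\prec,\succ)$ is a \nva, and then the dictionary of the first paragraph states precisely that $(A,L_{\lhd_1},R_{\lhd_2},L_{\rhd_1},R_{\rhd_2})$ is a representation of it, which finishes the proof.

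The argument carries no genuine obstacle; the only thing requiring care — and the one place where a term could go astray — is keeping straight which ``$\ob$'' (the partial sum $\lhd_1+\rhd_1$, the partial sum $\lhd_2+\rhd_2$, or the full sum $\prec+\succ$) occurs in each identity, both when matching \eqref{eq:pnva1}--\eqref{eq:pnva6} with \eqref{eq:r1}--\eqref{eq:r6} and when carrying out the two telescoping computations that yield \eqref{eq:nv1}--\eqref{eq:nv2}.
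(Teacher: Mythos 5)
Your proof is correct and follows essentially the same route as the paper: the six pre-Novikov axioms are exactly the six representation axioms for $(A,L_{\lhd_1},R_{\lhd_2},L_{\rhd_1},R_{\rhd_2})$, axioms \eqref{eq:pnva1}--\eqref{eq:pnva3} sum to \eqref{eq:nv1}, and \eqref{eq:pnva4}--\eqref{eq:pnva6} sum to \eqref{eq:nv2}. Your write-up is in fact more explicit than the paper's (which asserts these correspondences without displaying the telescoping), and your caution about distinguishing $\ell_\ob=L_{\lhd_1}+L_{\rhd_1}$ and $r_\ob=R_{\lhd_2}+R_{\rhd_2}$ from multiplication by the full product $\ob$ is exactly the right point to flag.
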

\begin{proof}
If $(A,\lhd_1, \lhd_2,\rhd_1, \rhd_2)$ is a \pnva, then Eqs.~\meqref{eq:pnva1}--\meqref{eq:pnva6} hold.  Thus, Eq.~\meqref{eq:nv1} follows from Eqs.~\meqref{eq:pnva1}--\meqref{eq:pnva3}. Similarly, Eq.~\meqref{eq:nv2} follows from Eqs.~\meqref{eq:pnva4}--\meqref{eq:pnva6}. Moreover, by Definition~\mref{defi:5.3}, $(A,L_{\lhd_1},R_{\lhd_2},L_{\rhd_1}, R_{\rhd_2})$ satisfies Eqs.~\meqref{eq:r1}--\meqref{eq:r6}, and is a representation of $(A,\prec,\su)$. 

Conversely, if $(A,\prec,\su)$  is a \nva and $(A,L_{\lhd_1},R_{\lhd_2},L_{\rhd_1}, R_{\rhd_2})$ is a representation of $(A,\prec,\su)$,  Eqs.~\meqref{eq:pnva1}--\meqref{eq:pnva6} follow from Eqs.~\meqref{eq:r1}--\meqref{eq:r6}.
\end{proof}

\begin{pro} \mlabel{pro:pnva}
Let $(A,\prec,\su)$ be a \nva, $(V,\ell_\prec,r_\prec,\ell_\su,r_\su)$ be a representation of $(A,\prec,\su)$ and $T:V\rightarrow A$ be an $\calo$-operator of $(A,\prec,\su)$ associated to $(V,\ell_\prec,r_\prec,\ell_\su,r_\su)$. Then there exsits a \pnva structure on $V$ defined by
\begin{align}
u \lhd_1 v&=\ell_\prec(T(u))v,\;
u \lhd_2 v=r_\prec(T(v))u,\mlabel{eq:op1}\\
u \rhd_1 v&=\ell_\su(T(u))v,\;
u \rhd_2 v=r_\su(T(v))u,\quad u,v\in V.\mlabel{eq:op2}
\end{align}
Define
\begin{align}
u \lhd_1 v&=\ell_\prec(T(u))v,\;
u \lhd_2 v=r_\prec(T(v))u,\mlabel{eq:op1}\\
u \rhd_1 v&=\ell_\su(T(u))v,\;
u \rhd_2 v=r_\su(T(v))u,\quad u,v\in V.\mlabel{eq:op2}
\end{align}
If $\prec$ and $\succ$ satisfy Eqs.~\meqref{eq:pnvab1} and~\meqref{eq:pnvab2}, then $(A,\lhd_1,\lhd_2,\rhd_1,\rhd_2)$ is a \pnva.
\end{pro}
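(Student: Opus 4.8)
The plan is to verify the six pre-Novikov axioms \meqref{eq:pnva1}--\meqref{eq:pnva6} for the operations on $V$ defined by \meqref{eq:op1}--\meqref{eq:op2} directly, using the $\mathcal{O}$-operator condition \meqref{eq:ope} to reduce each of them to one of the representation identities \meqref{eq:r1}--\meqref{eq:r6}. Equivalently, one may first check that $(V,\prec_V,\su_V)$ with $\prec_V:=\lhd_1+\lhd_2$ and $\su_V:=\rhd_1+\rhd_2$ is a \nva and that $(V,L_{\lhd_1},R_{\lhd_2},L_{\rhd_1},R_{\rhd_2})$ is a representation of it, and then invoke the converse half of the preceding proposition; the underlying computation is the same either way, so I would present the direct route.

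First I would record the one consequence of the $\mathcal{O}$-operator hypothesis that the whole argument rests on: writing $u\prec_V v:=u\lhd_1 v+u\lhd_2 v=\ell_\prec(T(u))v+r_\prec(T(v))u$ and $u\su_V v:=u\rhd_1 v+u\rhd_2 v=\ell_\su(T(u))v+r_\su(T(v))u$, Eq.~\meqref{eq:ope} says precisely $T(u\ast_V v)=T(u)\ast T(v)$ for $\ast\in\{\prec,\su\}$, and hence also $T(u\ob_V v)=T(u)\ob T(v)$; that is, $T$ is multiplicative for the operations induced on $V$. Then I would run through \meqref{eq:pnva1}--\meqref{eq:pnva6} one by one: unfold \meqref{eq:op1}--\meqref{eq:op2}, use multiplicativity of $T$ to replace arguments of the form $T(u\ast_V v)$ by $T(u)\ast T(v)$, and collect the $\lhd$- and $\rhd$-summands via $\ell_\prec+\ell_\su=\ell_\ob$ and $r_\prec+r_\su=r_\ob$. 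This turns \meqref{eq:pnva1} into $\ell_\prec\big(T(u)\su T(v)\big)w=\ell_\su(T(u))\ell_\prec(T(v))w$, i.e.\ \meqref{eq:r1} evaluated at $x=T(u),\,y=T(v)$; it turns \meqref{eq:pnva2} and \meqref{eq:pnva3} into \meqref{eq:r2} and \meqref{eq:r3}; and, after summing the two summands, it turns \meqref{eq:pnva4}, \meqref{eq:pnva5}, \meqref{eq:pnva6} into \meqref{eq:r4}, \meqref{eq:r5}, \meqref{eq:r6} respectively. Since $(V,\ell_\prec,r_\prec,\ell_\su,r_\su)$ is a representation of $(A,\prec,\su)$, these all hold, whence \meqref{eq:pnva1}--\meqref{eq:pnva6} hold and $(V,\lhd_1,\lhd_2,\rhd_1,\rhd_2)$ is a \pnva.

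There is no conceptual obstacle; the only work is the bookkeeping in this last step, which is parallel to the computation already done in the preceding proposition, with $L_{\lhd_1}=\ell_\prec\circ T$, $R_{\lhd_2}=r_\prec\circ T$, $L_{\rhd_1}=\ell_\su\circ T$, $R_{\rhd_2}=r_\su\circ T$ playing the roles of $\ell_\prec,r_\prec,\ell_\su,r_\su$. If one additionally wants the statement that $(V,\prec_V,\su_V)$ is itself a \nva, it is a clean corollary of Proposition~\mref{pro:ld-rep}: by multiplicativity of $T$, the graph $\{\,T(u)+u\mid u\in V\,\}$ is closed under the operations $\prec',\su'$ of the semi-direct product $(A\ltimes_{r_\prec,r_\su}^{\ell_\prec,\ell_\su}V,\prec',\su')$, hence is a sub-\nva, and $u\mapsto T(u)+u$ is a \nva isomorphism from $(V,\prec_V,\su_V)$ onto it; combined with the representation claim above, the converse direction of the preceding proposition then reproves the conclusion.
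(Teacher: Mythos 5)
Your proof is correct and follows essentially the same route as the paper: unfold the operations via \meqref{eq:op1}--\meqref{eq:op2}, use the $\mathcal{O}$-operator identity to rewrite $T$ of the induced products as $T(u)\ast T(v)$, and reduce each of \meqref{eq:pnva1}--\meqref{eq:pnva6} to the corresponding representation identity \meqref{eq:r1}--\meqref{eq:r6} (the paper writes out only the case of \meqref{eq:pnva1} and declares the rest similar). Your explicit tabulation of which axiom reduces to which identity, and the closing remark on the graph of $T$ as a subalgebra of the semi-direct product, are correct but not substantively different from the paper's argument.
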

\begin{proof}
	For all $u,v,w \in V$, we have
\begin{align*}
(u\su v)\lhd_1 w &= \ell_\prec(T(u \rhd_1 v+u \rhd_2 v))w
= \ell_\prec(T(\ell_\su(T(u)) v+r_\su(T(v))u))w\\
&=\ell_\prec(T(u)\su T(v))w
=\ell_\su(T(u))\ell_\prec(T(v))w\quad(\text{by Eq.~\meqref{eq:r1}})\\
&=u \rhd_1 (\ell_\prec(T(v))w)
=u \rhd_1(v\lhd_1 w).
\end{align*}
Then Eq.~\meqref{eq:pnva1} follows. Eqs.~\meqref{eq:pnva2}--\meqref{eq:pnva6} are verified by the same method.
\end{proof}

Conversely, we have
\begin{pro}\mlabel{pro:id} Let $(A,\lhd_1, \lhd_2,\rhd_1, \rhd_2)$ be a \pnva  and $(A,\prec,\su)$ be the associated \nva.
 Then the identity map $\id$ on $A$ is an $\mathcal{O}$-operator of $(A,\prec,\su)$ associated to the induced representation
$(A,L_{\lhd_1},R_{\lhd_2},L_{\rhd_1}, R_{\rhd_2})$.
\end{pro}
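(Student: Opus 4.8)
The plan is to unwind the definition of an $\mathcal{O}$-operator (Definition~\mref{defi:ope}) in the special case $V=A$, $T=\id$, with the induced representation $(A,L_{\lhd_1},R_{\lhd_2},L_{\rhd_1},R_{\rhd_2})$ of the associated \nva $(A,\prec,\su)$. By the preceding proposition, this quintuple really is a representation of $(A,\prec,\su)$, so the $\mathcal{O}$-operator condition~\meqref{eq:ope} is well-posed for the pair $(\id, (A,L_{\lhd_1},R_{\lhd_2},L_{\rhd_1},R_{\rhd_2}))$; it then remains only to check that~\meqref{eq:ope} holds.

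Concretely, I would substitute $T=\id$ into~\meqref{eq:ope} and treat the two cases $\ast\in\{\prec,\su\}$ separately. For $\ast=\prec$, the relevant maps are $\ell_\prec=L_{\lhd_1}$ and $r_\prec=R_{\lhd_2}$, so the right-hand side of~\meqref{eq:ope} becomes $L_{\lhd_1}(u)v+R_{\lhd_2}(v)u=u\lhd_1 v+u\lhd_2 v$ while the left-hand side is $u\prec v$; these coincide by the defining relation~\meqref{eq:pnvab1} of $\prec$. For $\ast=\su$, with $\ell_\su=L_{\rhd_1}$ and $r_\su=R_{\rhd_2}$, the right-hand side is $L_{\rhd_1}(u)v+R_{\rhd_2}(v)u=u\rhd_1 v+u\rhd_2 v$ and the left-hand side is $u\su v$; these coincide by~\meqref{eq:pnvab2}. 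Hence~\meqref{eq:ope} holds for both operations, and $\id$ is an $\mathcal{O}$-operator of $(A,\prec,\su)$ associated to $(A,L_{\lhd_1},R_{\lhd_2},L_{\rhd_1},R_{\rhd_2})$.

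I do not anticipate any real obstacle: this statement is the operator-form counterpart of the preceding proposition, and once one observes that the associated \nva product was defined precisely so that the $\mathcal{O}$-operator equation for $T=\id$ collapses to~\meqref{eq:pnvab1}--\meqref{eq:pnvab2}, the proof is a one-line substitution. The only point worth flagging is that the phrase ``$\mathcal{O}$-operator associated to $(A,L_{\lhd_1},R_{\lhd_2},L_{\rhd_1},R_{\rhd_2})$'' presupposes that this quintuple is a representation, which is exactly what the earlier proposition supplies; with that in hand there is nothing further to prove.
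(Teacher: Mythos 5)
Your proposal is correct and is essentially identical to the paper's own proof: both simply rewrite $x\prec y=L_{\lhd_1}(x)y+R_{\lhd_2}(y)x$ and $x\su y=L_{\rhd_1}(x)y+R_{\rhd_2}(y)x$ via Eqs.~\eqref{eq:pnvab1}--\eqref{eq:pnvab2} and observe that this is exactly the $\mathcal{O}$-operator condition~\eqref{eq:ope} for $T=\id$. Your remark that the induced quintuple must first be known to be a representation is a sensible point the paper leaves implicit.
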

\begin{proof} By Eqs.~\meqref{eq:pnvab1}--\meqref{eq:pnvab2}, we have 
$$x\prec y=x\lhd_1 y+x\lhd_2 y=L_{\lhd_1}(x)y+R_{\lhd_2}(y)x,\;
x\su y=x\rhd_1 y+x\rhd_2 y=L_{\rhd_1}(x)y+R_{\rhd_2}(y)x.$$
By Definition~\mref{defi:ope}, $\id$ is an $\mathcal{O}$-operator of $(A,\prec,\su)$ associated to $(A,L_{\lhd_1},R_{\lhd_2},L_{\rhd_1}, R_{\rhd_2})$.
\end{proof}

\subsection{Quasi-Frobenius \nvas and NYBEs}
By ~\cite[Corollary]{D1983}, there is an intimate relation between antisymmetric solutions of the CYBE and
 quasi-Frobenius Lie algebras. That is, for a given finite-dimensional Lie algebra $\frakg$, $r\in \frakg\ot \frakg$ is an antisymmetric nondegenerate solution of the CYBE in $\frakg$ if and only if $\frakg$ is a quasi-Frobenius Lie algebra (see also \cite[Theorem 1.1]{BFS97}).  This result has an analog for Novikov algebras by~\cite[Proposition 3.26]{HBG23}.   In order to extend these results to the context of \nvas, we present the following notion.
\begin{defi}
Let $(A,\prec,\su)$ be a \nva. If there is an antisymmetric nondegenerate bilinear form $\frakB$ on A satisfying
\begin{eqnarray}
\frakB(x\prec y,z)+\frakB(y\su z,x)-\frakB(z\ob x,y)=0,\quad x,y\in A,\mlabel{eq:109}
\end{eqnarray}
then $(A,\prec,\su,\frakB)$ is called a {\bf quasi-Frobenius \nva}.
\end{defi}
For finite dimensional vector spaces $V$ and $W$, a tensor $r=\sum_{i}u_i\ot v_i\in V\otimes W$  can be  viewed as a linear map $r^\sharp:V^{*}\rightarrow W$ by
\begin{equation}\mlabel{eq:tensor}
	{r^\sharp}(a^{*})=\sum_i \sqmon{a^*, u_i}v_i,\quad a^{*}\in V^{*}.
\end{equation}
Thus an element $r=\sum_{i}u_i\ot v_i\in A\ot A$ induces a linear map $r^\sharp: A^*\to A$
by  
\begin{equation}\mlabel{eq:as}
\langle r^\sharp(a^*), b^*\rangle:=\langle r, a^*\ot b^*\rangle.
\end{equation}
Then $r$ is said to be {\bf nondegenerate} if $r^\sharp:A^*\to A$ is an isomorphism. 

\begin{pro}\mlabel{pro:4.6}
Let $(A,\prec,\su)$ be a \nva. Then $r\in A\ot A$ is an
antisymmetric nondegenerate solution of the NYBE in $(A,\prec,\su)$ if and only if $(A,\prec,\su,\frakB)$ is a quasi-Frobenius noncommutative Novikov algebra, where the bilinear form $\frakB$ on $A$ is defined by
$$\frakB(x,y):=\langle (r^\sharp)^{-1}(x),y\rangle,\quad x,y\in A.$$
\end{pro}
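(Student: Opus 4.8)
The plan is to reduce the statement to Proposition~\mref{pro:oelda}, which characterizes antisymmetric solutions of the NYBE as $\mathcal{O}$-operators, and then to translate the resulting $\mathcal{O}$-operator identity into the quasi-Frobenius identity~\meqref{eq:109}, using that nondegeneracy of $r$ makes $r^\sharp\colon A^*\to A$ a linear isomorphism. Before that, record three elementary observations. When $r^\sharp$ is invertible, $\frakB(x,y)=\langle (r^\sharp)^{-1}(x),y\rangle$ is a well-defined bilinear form, and it is nondegenerate since $(r^\sharp)^{-1}$ is an isomorphism. Writing $r=\sum_i u_i\ot v_i$ and using~\meqref{eq:as}, a direct check shows $\frakB(x,y)=-\frakB(y,x)$ for all $x,y$ precisely when $r=-\sigma(r)$, that is, $r$ is antisymmetric. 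Hence it suffices to prove: for antisymmetric nondegenerate $r$, the NYBE holds if and only if $\frakB$ satisfies~\meqref{eq:109}.

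By Proposition~\mref{pro:oelda} (together with Example~\mref{ex:ap}), and identifying $r$ with the map $r^\sharp$, the element $r$ solves the NYBE if and only if $r^\sharp$ is an $\mathcal{O}$-operator of $(A,\prec,\succ)$ associated to the coadjoint representation $(A^*,-R^*_\ob,L^*_\succ,R^*_\prec,-L^*_\ob)$; by Definition~\mref{defi:ope} this amounts, for all $a^*,b^*\in A^*$, to
\begin{align*}
r^\sharp(a^*)\succ r^\sharp(b^*)&=r^\sharp\bigl(R^*_\prec(r^\sharp(a^*))b^*-L^*_\ob(r^\sharp(b^*))a^*\bigr),\\
r^\sharp(a^*)\prec r^\sharp(b^*)&=r^\sharp\bigl(-R^*_\ob(r^\sharp(a^*))b^*+L^*_\succ(r^\sharp(b^*))a^*\bigr).
\end{align*}
Since $r^\sharp$ is bijective, set $x=r^\sharp(a^*)$ and $y=r^\sharp(b^*)$, so $a^*=(r^\sharp)^{-1}(x)$ and $b^*=(r^\sharp)^{-1}(y)$. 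Applying $(r^\sharp)^{-1}$ to the first identity and pairing with an arbitrary $z\in A$, the left-hand side becomes $\frakB(x\succ y,z)$, while $\langle R^*_\prec(x)b^*,z\rangle=\langle b^*,z\prec x\rangle=\frakB(y,z\prec x)$ and $\langle L^*_\ob(y)a^*,z\rangle=\langle a^*,y\ob z\rangle=\frakB(x,y\ob z)$, by the definitions of $L_\ast,R_\ast$, of the linear dual, and of $\frakB$. As $x,y$ then range over all of $A$ while $z$ is arbitrary, and since pairing against every $z$ detects an element of $A^*$, the first $\mathcal{O}$-operator identity is equivalent to
\[
\frakB(x\succ y,z)=\frakB(y,z\prec x)-\frakB(x,y\ob z),\qquad x,y,z\in A.
\]
An entirely parallel computation turns the second identity into an equivalent relation, so either one alone will do.

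To finish, relabel $(x,y,z)\mapsto(y,z,x)$ in the displayed identity, obtaining $\frakB(y\succ z,x)=\frakB(z,x\prec y)-\frakB(y,z\ob x)$, and apply the antisymmetry of $\frakB$ to the two terms on the right, namely $\frakB(z,x\prec y)=-\frakB(x\prec y,z)$ and $\frakB(y,z\ob x)=-\frakB(z\ob x,y)$. This rearranges to exactly $\frakB(x\prec y,z)+\frakB(y\succ z,x)-\frakB(z\ob x,y)=0$, i.e.~\meqref{eq:109}; every step is reversible, so~\meqref{eq:109} in turn yields the displayed identity. Chaining the equivalences --- the NYBE $\Leftrightarrow$ the $\mathcal{O}$-operator condition $\Leftrightarrow$ the displayed $\frakB$-identity $\Leftrightarrow$ \meqref{eq:109} --- together with the preliminary remarks on $\frakB$, proves the claimed equivalence. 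The only point needing care is the sign and index bookkeeping in the translation and in the final relabeling; beyond invoking Proposition~\mref{pro:oelda} there is no genuine obstacle.
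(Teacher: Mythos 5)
Your proof is correct, but it takes a genuinely different route from the paper's. The paper proves Proposition~\mref{pro:4.6} by a self-contained tensor computation: writing $r=\sum_i u_i\ot v_i$ and $x=r^\sharp(a^*)$, $y=r^\sharp(b^*)$, $z=r^\sharp(c^*)$, it expands each of the three terms of Eq.~\meqref{eq:109} into the pairing of the corresponding summand of $E(r)$ against $a^*\ot b^*\ot c^*$ (for instance $\frakB(x\prec y,z)=-\langle\sum_{i,j}u_i\ot u_j\ot(v_i\prec v_j),\,a^*\ot b^*\ot c^*\rangle$), so that Eq.~\meqref{eq:109} is read off term by term as the dual of the NYBE. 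You instead factor the argument through Proposition~\mref{pro:oelda}: for antisymmetric $r$ the NYBE is the $\mathcal{O}$-operator condition for $r^\sharp$ relative to the coadjoint representation, and invertibility of $r^\sharp$ transports that operator identity, via $\frakB(x,y)=\langle(r^\sharp)^{-1}(x),y\rangle$, directly into the invariance identity~\meqref{eq:109}. Your translation and the final relabeling are sign-correct (I checked $\langle R^*_\prec(x)b^*,z\rangle=\frakB(y,z\prec x)$ and $\langle L^*_\ob(y)a^*,z\rangle=\frakB(x,y\ob z)$ against the paper's conventions), and your remark that either $\mathcal{O}$-operator identity alone suffices is also right --- the $\prec$-identity translates, after one application of antisymmetry of $\frakB$, into exactly Eq.~\meqref{eq:109} with no relabeling, so the conjunction of the two identities is equivalent to either one and hence to~\meqref{eq:109}; it would be worth displaying that second translation so the reader sees that~\meqref{eq:109} really implies both identities. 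The trade-off is the usual one: your proof is shorter and more modular because it reuses the dualization already carried out in Proposition~\mref{pro:oelda}, while the paper's version repeats that work but keeps the proposition independent of the $\mathcal{O}$-operator machinery.
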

\begin{proof}For all $a^*,b^*,c^*\in A^*$, we set $r^\sharp(a^*)=x, r^\sharp(b^*)=y, r^\sharp(c^*)=z$ for some $x,y,z\in A$. Then by Eq.~\meqref{eq:as},
$$\frakB(x,y)=\langle (r^\sharp)^{-1}(x),y\rangle=\langle a^*,r^\sharp(b^*)\rangle=\langle r, b^*\ot a^*\rangle.$$
$$\frakB(y,x)=\langle (r^\sharp)^{-1}(y),x\rangle=\langle b^*,r^\sharp(a^*)\rangle=\langle r, a^*\ot b^*\rangle=\langle \sigma(r), b^*\ot a^*\rangle.$$
Thus $\frakB$ is antisymmetric if and only if $r$ is antisymmetric.

Furthermore, we have 
\begin{align*}
\frakB(x\prec y,z)&=-\frakB(z,x\prec y)=-\langle (r^\sharp)^{-1}(z), L_\prec(x)y\rangle\\
&=-\langle L^*_\prec(x)(c^*), y\rangle=-\langle L^*_\prec(x)(c^*), r^\sharp(b^*)\rangle\\
&=-\langle r, b^* \ot  L^*_\prec(x)(c^*)\rangle=-\langle(\id\ot L_\prec(x)) r, b^* \ot  c^*\rangle\quad(\text{by Eq.~\meqref{eq:as}})\\
&=-\langle\sum_i u_i\ot (x \prec v_i), b^* \ot  c^*\rangle
=-\langle\sum_i u_i\ot (r^\sharp(a^*) \prec v_i), b^*\ot  c^*\rangle\\
 &=-\langle\sum_i u_i\ot (\sum_j a^*(u_j)v_j\prec v_i), b^* \ot  c^*\rangle\quad(\text{by Eq.~\meqref{eq:tensor}})\\
&=-\langle\sum_{i,j} u_j\ot u_i\ot ( v_j\prec v_i), a^*\ot b^* \ot  c^*\rangle=-\langle\sum_{i,j} u_i\ot u_j\ot ( v_i\prec v_j), a^*\ot b^* \ot  c^*\rangle.
\end{align*}
By the same argument, we get
\begin{align*}
\frakB(y\su z,x)&=-\langle \sum_{i,j}u_i\succ u_j\ot v_i\ot v_j,a^*\ot b^*\ot c^*\rangle.\\
-\frakB(z\ob x,y)&=-\langle \sum_{i,j}u_i\ot u_j\ob v_i\ot v_j,a^*\ot b^*\ot c^*\rangle.
\end{align*}
Thus, $r$ is a solution of the NYBE in $(A,\prec,\su)$ if and only if $\frakB$ satisfies Eq.~\meqref{eq:109}.
\end{proof}

The notion of a dendriform algebra was introduced by Loday~\mcite{Lo} with motivation from algebraic $K$-theory. We now provide a construction of  \pnvas  from differential dendriform algebras.
\begin{ex}
A {\bf dendriform algebra} is a vector space $A$ equipped with binary operations $\prec_d$ and $\succ_d$  satisfying the following relations:
	\begin{align}
		(x \prec_d y) \prec_d z &= x \prec_d (y\prec_d z +y \succ_d z),\mlabel{eq:dd1}\\
		(x \succ_d y ) \prec_d z&= x \succ_d (y\prec_d z),\mlabel{eq:dd2}  \\
		(x \prec_d y +x\succ_d y)\succ_d z &= x \succ_d (y\succ_d z), \quad x, y, z\in D.
		\mlabel{eq:dd3}
	\end{align}
Here $\prec_d$ and $\succ_d$ are used to distinguish from $\prec$ and $\succ$ in a noncommutative Novikov algebra. 
A {\bf differential dendriform algebra} is a dendriform algebra $(A,\prec_d,\succ_d)$ equipped with a linear map $D$ on $A$ such that  
$$D(x\prec_d y)= D(x)\prec_d y+ x\prec_d D(y)\quad\text{and}\quad D(x\succ_d y)= D(x)\succ_d y+ x\succ_d D(y),\quad x,y\in A.$$
Then we define binary operations  $\lhd_1,\lhd_2$ and $\rhd_1,\rhd_2: A\otimes
A\rightarrow A$ by
$$x\lhd_1 y\coloneqq x\succ_d D(y),\;\;x\rhd_1 y\coloneqq D(x)\succ_d y,\;x\lhd_2 y \coloneqq x\prec_d D(y),\;\;x\rhd_2 y\coloneqq D(x)\prec_d y,\;\;  x,y\in A.
$$
A direct check shows that $(A,\lhd_1,\lhd_2,\rhd_1,\rhd_2)$ is a \pnva.
\end{ex}

\begin{thm}\mlabel{thm:4.9}
Let $(A,\lhd_1,\lhd_2,\rhd_1,\rhd_2)$ be a \pnva  and $(A,
\prec,\succ)$ be the associated noncommutative Novikov algebra.  Then
$r:=\sum_{i=1}^n(e_i^\ast\otimes e_i-e_i\otimes e_i^\ast),$
is an antisymmetric solution of the NYBE in
the noncommutative Novikov algebra $A\ltimes_{L^*_{\rhd_1},\,-(L^*_{\lhd_1}+L^*_{\rhd_1})}^{-(R^*_{\lhd_2}+R^*_{\rhd_2}),\,R^*_{\lhd_2}}A^*$ in Proposition~\mref{pro:ld-rep}, where $\{e_1, \ldots, e_n\}$ is a linear basis of
$A$ and $\{e_1^\ast, \ldots, e_n^\ast\}$ is its dual basis. Moreover, $r$ is nondegenerate and $A\ltimes_{L^*_{\rhd_1},\,-(L^*_{\lhd_1}+L^*_{\rhd_1})}^{-(R^*_{\lhd_2}+R^*_{\rhd_2}),\,R^*_{\lhd_2}}A^*$ with the induced bilinear form $\frakB$ given by
\begin{eqnarray}\mlabel{eq:frakb}
\frakB(x+a^*, y+b^*)=\langle x, b^*\rangle-\langle a^*, y\rangle, \quad   x, y\in A, a^*, b^*\in
A^\ast,
\end{eqnarray} 
is a quasi-Frobenius noncommutative Novikov algebra.
\end{thm}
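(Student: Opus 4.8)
The plan is to assemble the statement from Proposition~\ref{pro:id}, Theorem~\ref{thm:5.8} and Proposition~\ref{pro:4.6}, so that essentially no new computation is needed beyond matching notation. First I would invoke Proposition~\ref{pro:id}: the identity map $\id$ on $A$ is an $\mathcal{O}$-operator of the associated \nva $(A,\prec,\su)$ with respect to the induced representation $(A,L_{\lhd_1},R_{\lhd_2},L_{\rhd_1},R_{\rhd_2})$. I then apply Theorem~\ref{thm:5.8} with $V=A$, $(\ell_\prec,r_\prec,\ell_\su,r_\su)=(L_{\lhd_1},R_{\lhd_2},L_{\rhd_1},R_{\rhd_2})$ and $T=\id$: the $2$-tensor attached to $T$ is $r_T=\sum_{i=1}^n e_i^*\ot e_i$, so $r=r_T-\sigma(r_T)$ is precisely the tensor in the statement. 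Substituting $\ell_\su=L_{\rhd_1}$, $\ell_\ob=L_{\lhd_1}+L_{\rhd_1}$, $r_\prec=R_{\lhd_2}$ and $r_\ob=R_{\lhd_2}+R_{\rhd_2}$ into the semi-direct product $A\ltimes_{\ell^*_{\su},-\ell^*_\ob}^{-r^*_\ob,r^*_\prec}V^*$ of Theorem~\ref{thm:5.8} identifies it with $A\ltimes_{L^*_{\rhd_1},\,-(L^*_{\lhd_1}+L^*_{\rhd_1})}^{-(R^*_{\lhd_2}+R^*_{\rhd_2}),\,R^*_{\lhd_2}}A^*$; since $\id$ is an $\mathcal{O}$-operator, Theorem~\ref{thm:5.8} then gives that $r$ is an antisymmetric solution of the NYBE there.

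Next I would check nondegeneracy and identify $\frakB$. Using Eq.~\eqref{eq:tensor} together with the canonical isomorphism $(A\oplus A^*)^*\cong A^*\oplus A$ (identifying $A^{**}$ with $A$), a short direct computation shows that $r^\sharp\colon(A\oplus A^*)^*\to A\oplus A^*$ sends the functional $z+c^*\mapsto\langle\beta,z\rangle+\langle c^*,y\rangle$ (with $\beta\in A^*$, $y\in A$) to $y-\beta\in A\oplus A^*$; this is a linear isomorphism, so $r$ is nondegenerate, and $(r^\sharp)^{-1}(x+a^*)$ is the functional $z+c^*\mapsto\langle x,c^*\rangle-\langle a^*,z\rangle$. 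Finally, Proposition~\ref{pro:4.6} applies: $r$ being an antisymmetric nondegenerate solution of the NYBE, the above semi-direct product \nva together with $\frakB(u,v):=\langle(r^\sharp)^{-1}(u),v\rangle$ is a quasi-Frobenius \nva, and the computed form of $(r^\sharp)^{-1}$ yields $\frakB(x+a^*,y+b^*)=\langle x,b^*\rangle-\langle a^*,y\rangle$, which is Eq.~\eqref{eq:frakb}.

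The only step requiring care is the bookkeeping: matching the four structure maps of the dual representation on $A^*$ (cf.\ Proposition~\ref{pro:dg}) against the sub- and superscript conventions of Proposition~\ref{pro:ld-rep} and Theorem~\ref{thm:5.8}, and keeping the identification $A^{**}\cong A$ consistent when computing $r^\sharp$. There is no substantive obstacle, since all the real work is already done in Theorem~\ref{thm:5.8} and Propositions~\ref{pro:id} and~\ref{pro:4.6}.
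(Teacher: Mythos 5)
Your proposal is correct and follows essentially the same route as the paper: invoke Proposition~\ref{pro:id} to get that $\id$ is an $\mathcal{O}$-operator for $(A,L_{\lhd_1},R_{\lhd_2},L_{\rhd_1},R_{\rhd_2})$, feed this into Theorem~\ref{thm:5.8} with $r_{\id}=\sum_i e_i^*\ot e_i$ to get the antisymmetric solution in the stated semi-direct product, then compute $r^\sharp$ explicitly (the paper gets $r^\sharp(e_j)=e_j$, $r^\sharp(e_j^*)=-e_j^*$, matching your description) and conclude via Proposition~\ref{pro:4.6}. No substantive differences.
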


\begin{proof}
By Proposition~\mref{pro:id}, the identity map $\id$ is an $\mathcal{O}$-operator of $(A,\prec,\su)$ associated to the representation
$(A,L_{\lhd_1},R_{\lhd_2},L_{\rhd_1}, R_{\rhd_2})$. Since $\id:A\to A$ can be identified  with  the tensor $r_\id:=\sum_{i}e^*_i\ot e_i\in A^*\ot A$, we have $r=r_\id-\sigma(r_\id)$. So  by Theorem~\mref{thm:5.8}, $r$ is  an antisymmetric solution of the NYBE in the noncommutative Novikov algebra $A\ltimes_{L^*_{\rhd_1},\,-(L^*_{\lhd_1}+L^*_{\rhd_1})}^{-(R^*_{\lhd_2}+R^*_{\rhd_2}),\,R^*_{\lhd_2}}A^*$.

On the other hand, by $r=\sum_{i=1}^n(e_i^\ast\otimes e_i-e_i\otimes e_i^\ast)$, $(A\oplus A^*)^*=A^*\oplus A$ and Eq.~\meqref{eq:tensor}, for $e_j\in A, e_j^*\in A^*$, we get a linear isomorphism $r^\sharp: (A\oplus A^*)^*\to A\ot A^*$ given by
$$r^\sharp(e_j)=\sum_i\langle e_i^*,e_j\rangle e_i-\langle e_i,e_j\rangle e_i^*=e_j\quad\text{and}\quad r^\sharp(e_j^*)=\sum_i\langle e_j^*,e_i^*\rangle e_i-\langle e_j^*, e_i\rangle e_i^*=-e_j^*.$$
Thus, $r$ is nondegenerate,  and this leads to $(r^\sharp)^{-1}(e_j)=e_j$ and $ (r^\sharp)^{-1}(e_j^*)=-e_j^*$.
So we obtain
$$\langle (r^\sharp)^{-1}(x+a^*),y+b^*\rangle=\langle -a^*+x,y+b^*\rangle=-\langle a^*,y\rangle+\langle x,b^*\rangle,$$
which gives
$\frakB(x+a^*, y+b^*)=\langle (r^\sharp)^{-1}(x+a^*),y+b^*\rangle$. Then the second statement follows by Proposition~\mref{pro:4.6}.
\end{proof}

By \cite[Theorem 2.10]{LH24}, a quasi-Frobenius Novikov algebra leads naturally to a pre-Novikov algebra structure. We now extend this fact to the noncommutative case.
\begin{thm}
Let $(A,\prec,\su,\frakB)$ be a quasi-Frobenius \nva. Then there is a  \pnva structure on $A$ given by
\begin{align*}
\frakB(x\lhd_1 y,z)&:= \frakB(z\ob x,y),\;
\frakB(x\lhd_2 y,z):= \frakB(x,y\su z),\\
\frakB(x\rhd_1 y,z)&:= \frakB(y,z\prec x),\;
\frakB(x\rhd_2 y,z):= \frakB(y\ob z,x),\quad x,y,z\in A,
\end{align*}
such that $(A,\prec,\su)$ is the associated \nva of $(A,\lhd_1, \lhd_2,\rhd_1, \rhd_2)$. 

\end{thm}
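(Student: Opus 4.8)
The plan is to obtain the claimed operations by transporting, along the isomorphism determined by $\frakB$, the \pnva structure that an $\mathcal{O}$-operator produces via Proposition~\mref{pro:pnva}.

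Since $\frakB$ is nondegenerate it induces a linear isomorphism $A\to A^*$; let $T:A^*\to A$ be its inverse, characterized by $\frakB(T(a^*),y)=\langle a^*,y\rangle$ for all $a^*\in A^*,\ y\in A$. Let $r\in A\ot A$ be the tensor with $r^\sharp=T$, so that $\frakB(x,y)=\langle (r^\sharp)^{-1}(x),y\rangle$. Then $r$ is nondegenerate, and it is antisymmetric because $\frakB$ is (by the first part of the proof of Proposition~\mref{pro:4.6}); since $\frakB$ satisfies Eq.~\meqref{eq:109}, that proposition shows $r$ is a solution of the NYBE in $(A,\prec,\su)$. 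Hence by Proposition~\mref{pro:oelda}, $T=r^\sharp$ is an $\mathcal{O}$-operator of $(A,\prec,\su)$ associated to the coadjoint representation $(A^*,-R^*_\ob,L^*_\succ,R^*_\prec,-L^*_\ob)$ of Example~\mref{ex:ap}. (Alternatively, one checks this directly: applying $T^{-1}$ to each side of~\meqref{eq:ope} and pairing with an arbitrary $z\in A$ turns the two required identities into Eq.~\meqref{eq:109} and its cyclic permutation, using the antisymmetry of $\frakB$.)

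Applying Proposition~\mref{pro:pnva} to $T$ now yields a \pnva structure $(A^*,\lhd_1,\lhd_2,\rhd_1,\rhd_2)$ on $A^*$, namely $a^*\lhd_1 b^*=-R^*_\ob(T(a^*))b^*$, $a^*\lhd_2 b^*=L^*_\succ(T(b^*))a^*$, $a^*\rhd_1 b^*=R^*_\prec(T(a^*))b^*$, and $a^*\rhd_2 b^*=-L^*_\ob(T(b^*))a^*$. Because a \pnva is defined by multilinear identities, the isomorphism $T$ transports this to a \pnva structure on $A$ via $x\lhd_i y:=T\big((T^{-1}x)\lhd_i(T^{-1}y)\big)$ and similarly for $\rhd_1,\rhd_2$; moreover, since~\meqref{eq:ope} says precisely that $T$ carries the associated Novikov operations on $A^*$ onto $\prec,\su$ on $A$, the associated \nva of the transported structure is exactly $(A,\prec,\su)$. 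It then only remains to match the transported operations with those in the statement: writing $a^*=T^{-1}x$, $b^*=T^{-1}y$ and using $\frakB(T(a^*),z)=\langle a^*,z\rangle$ together with the antisymmetry of $\frakB$, one computes for all $z\in A$, e.g.,
\begin{equation*}
\frakB(x\lhd_1 y,z)=\langle a^*\lhd_1 b^*,z\rangle=-\langle b^*,z\ob x\rangle=-\frakB(y,z\ob x)=\frakB(z\ob x,y),
\end{equation*}
and analogously $\frakB(x\lhd_2 y,z)=\frakB(x,y\su z)$, $\frakB(x\rhd_1 y,z)=\frakB(y,z\prec x)$, and $\frakB(x\rhd_2 y,z)=\frakB(y\ob z,x)$; nondegeneracy of $\frakB$ then gives the four stated formulas. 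The only genuine difficulty here is bookkeeping---tracking the $L^*/R^*$ dualization and the signs in the coadjoint representation consistently---because once the $\mathcal{O}$-operator $T$ is produced, Proposition~\mref{pro:pnva} supplies all of the \pnva axioms for free.
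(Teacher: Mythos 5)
Your proposal is correct and follows essentially the same route as the paper: use the nondegeneracy of $\frakB$ to produce an invertible map $T:A^*\to A$, show $T$ is an $\mathcal{O}$-operator for the coadjoint representation, invoke Proposition~\ref{pro:pnva} to get a \pnva structure on $A^*$, transport it to $A$ along $T$, and match the result with the stated $\frakB$-formulas. The only (minor) difference is that you deduce the $\mathcal{O}$-operator property from Propositions~\ref{pro:4.6} and~\ref{pro:oelda} via the NYBE, whereas the paper verifies it by a direct pairing computation against Eq.~\eqref{eq:109} --- the very check you offer as your parenthetical alternative.
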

\begin{proof}
By the nondegenerate bilinear form $\frakB$ on $A$, we  get an invertible linear map $T:A^*\rightarrow A$ given by
\begin{eqnarray}
\frakB(x,y)=\langle T^{-1}(x), y\rangle, \quad x,y\in A. \mlabel{eq:T}
\end{eqnarray}
For all $x,y,z\in A$, we have
\begin{eqnarray*}
\frakB(x\lhd_1 y,z)&=&\frakB(z\ob x,y)=-\frakB(y,z\ob x )=-\langle T^{-1}(y),z\ob x\rangle\\
&=&-\langle T^{-1}(y),R_\ob( x)z\rangle=-\langle R^*_\ob( x)T^{-1}(y),z\rangle\\
&=&\frakB(T(-R^*_\ob( x)T^{-1}(y)),z).
\end{eqnarray*}
\begin{eqnarray*}
	\frakB(x\lhd_2 y,z)&=&\frakB(x,y\su z)= \langle T^{-1}(x),y\su z\rangle\\
	&=& \langle T^{-1}(x),L_\su(y)z\rangle= \langle L_\su^*(y)T^{-1}(x),z\rangle\\
	&=&\frakB(T(L_\su^*(y)T^{-1}(x)),z).
\end{eqnarray*}
\begin{eqnarray*}
	\frakB(x\rhd_1 y,z)&=&\frakB(y,z\prec x)= \langle T^{-1}(y),z\prec x\rangle\\
	&=& \langle T^{-1}(y),R_\prec(x)z\rangle= \langle R^*_\prec(x)T^{-1}(y),z\rangle\\
	&=&\frakB(T(R^*_\prec(x)T^{-1}(y)),z).
\end{eqnarray*}
\begin{eqnarray*}
	\frakB(x\rhd_2 y,z)&=&\frakB(y\ob z,x)=-\frakB(x,y\ob z )=-\langle T^{-1}(x),y\ob z\rangle\\
	&=&-\langle T^{-1}(x),L_\ob(y)z\rangle=-\langle L^*_\ob(y)T^{-1}(x),z\rangle\\
	&=&\frakB(T(-L^*_\ob(y)T^{-1}(x)),z).
\end{eqnarray*}
By the nondegeneracy  of $\frakB$, we obtain
\begin{eqnarray*}
x\lhd_1 y=T(-R^*_\ob(x)T^{-1}(y)),\quad x\lhd_2 y=T(L_\su^*( y)T^{-1}(x)),\\
x\rhd_1 y=T(R^*_\prec(x)T^{-1}(y)), \quad x\rhd_2 y=T(-L^*_\ob(y)T^{-1}(x)).
\end{eqnarray*}
For all $a^*,b^*\in A^*$, define
\begin{align*}
a^*\lhd^*_1 b^*&:=(-R^*_\ob)(T(a^*))b^*,\quad a^*\lhd^*_2 b^*:=L_\su^*(T(b^*))a^*,\\
a^*\rhd^*_1 b^*&:=R^*_\prec(T(a^*))b^*, \quad a^*\rhd^*_2 b^*: =T(-L^*_\ob)(T(b^*))a^*.
\end{align*}
Set $x=T(a^*)$, $y=T(b^*)$. Then
\begin{align*}
x\lhd_1 y&=T(a^*)\lhd_1 T(b^*)=T(a^*\lhd_1^* b^*),\;
 x\lhd_2 y=T(a^*)\lhd_2 T(b^*)=T(a^*\lhd_2^* b^*),\\
x\rhd_1 y&=T(a^*)\rhd_1 T(b^*)=T(a^*\rhd_1^* b^*), \;
x\rhd_2 y=T(a^*)\rhd_2 T(b^*)=T(a^*\rhd_2^* b^*).
\end{align*}
To show that $(A,\lhd_1, \lhd_2,\rhd_1, \rhd_2)$ is a \pnva, it suffices to verify that $(A^*,\lhd_1^*, \lhd_2^*,\rhd_1^*, \rhd_2^*)$ is a \pnva.
For all  $a^*,b^*,c^*\in A^*$, we have
\begin{eqnarray*}
&&\langle c^*,T(a^*)\prec T(b^*)-T((-R^*_\ob)(T(a^*))b^*+L_\su^*(T(b^*))a^*) \rangle\\
&=&\frakB(T(c^*),T(a^*)\prec T(b^*))+\frakB(T(c^*),T(R^*_\ob(T(a^*))b^*-L_\su^*(T(b^*))a^*))\\
&=&\frakB(T(c^*),T(a^*)\prec T(b^*))-\frakB(T(R^*_\ob(T(a^*))b^*-L_\su^*(T(b^*))a^*),T(c^*))\\
&=&\frakB(T(c^*),T(a^*)\prec T(b^*))-\langle R^*_\ob(T(a^*))b^*-L_\su^*(T(b^*))a^*, T(c^*)\rangle\\
&=&\frakB(T(c^*),T(a^*)\prec T(b^*))-\langle b^*,T(c^*)\ob T(a^*)\rangle+\langle a^*,T(b^*)\su T(c^*)\rangle\\
&=&\frakB(T(c^*),T(a^*)\prec T(b^*))-\frakB (T(b^*),T(c^*)\ob T(a^*)) +\frakB (T(a^*),T(b^*)\su T(c^*))\\
&=&0
\end{eqnarray*}
by Eq.~\meqref{eq:109}. Similarly, we obtain
\begin{eqnarray*}
	&&\langle c^*,T(a^*)\su T(b^*)-T(R^*_\prec(T(a^*))b^*+(-L^*_\ob)(T(b^*))a^*) \rangle\\
	&=&\frakB(T(c^*),T(a^*)\su T(b^*))-\frakB(T(c^*),T(R^*_\prec(T(a^*))b^*-L_\ob^*(T(b^*))a^*))\\
	&=&\frakB(T(c^*),T(a^*)\su T(b^*))+\frakB(T(R^*_\prec(T(a^*))b^*-L_\ob^*(T(b^*))a^*),T(c^*))\\
	&=&\frakB(T(c^*),T(a^*)\su T(b^*))+\langle R^*_\prec(T(a^*))b^*-L_\ob^*(T(b^*))a^*, T(c^*)\rangle\\
	&=&\frakB(T(c^*),T(a^*)\su T(b^*))+\langle b^*,T(c^*)\prec T(a^*)\rangle-\langle a^*,T(b^*)\ob T(c^*)\rangle\\
	&=&\frakB(T(c^*),T(a^*)\su T(b^*))+\frakB (T(b^*),T(c^*)\prec T(a^*)) -\frakB (T(a^*),T(b^*)\ob T(c^*))\\
	&=&0
\end{eqnarray*}
by Eq.~\meqref{eq:109}. This gives
\begin{align*}
	T(a^*)\prec T(b^*)-T((-R^*_\ob)(T(a^*))b^*+L_\su^*(T(b^*))a^*)&=0,\\
	T(a^*)\su T(b^*)-T(R^*_\prec(T(a^*))b^*+(-L^*_\ob)(T(b^*))a^*)&=0.
\end{align*}
So $T:A^*\rightarrow A$ is an invertible $\calo$-operator of $(A,\prec,\su)$ associated to $(A,-R_\ob^*,L^*_\su,R^*_\prec,-L^*_\ob)$. By Proposition~\mref{pro:pnva}, $(A^*,\lhd_1^*, \lhd_2^*,\rhd_1^*, \rhd_2^*)$ is a \pnva. Moreover,
\begin{eqnarray*}
a\lhd_1 b+a \lhd_2 b = T((-R^*_\ob)(T(a^*))b^*+L_\su^*(T(b^*))a^*)=T(a^*)\prec T(b^*) =a\prec b.\\
a\rhd_1 b+a \rhd_2 b = T(R^*_\prec(T(a^*))b^*+(-L^*_\ob)(T(b^*))a^*)=T(a^*)\su T(b^*) =a\su b.
\end{eqnarray*}
Thus, $(A,\prec,\su)$ is the associated \nva of $(A,\lhd_1, \lhd_2,\rhd_1, \rhd_2)$.
\end{proof}

\section{Differential ASI bialgebras \wt, their characterizations and derived structures}
\mlabel{sec:DASI}
This section first gives  the notion of a differential \asi bialgebras of weight $\lambda$, based on the concept of an admissible differential algebra \wt. Several examples of differential \asi bialgebras of weight $\lambda$ are given. Using semi-direct product differential algebras, the notion of a representation of a differential algebra \wt is obtained. As a result, an admissible differential algebra of weight $\lambda$ is shown to be equivalent to a special representation of a differential algebra of weight $\lambda$.
\subsection{Differential ASI bialgebras \wt}
\begin{defi} Let $\lambda\in \bfk$ be given.
\begin{enumerate}
\item Let $(A,\cdot)$ be an algebra. A linear map $\partial:A \rightarrow A$ is called a {\bf derivation \wt} or simply a {\bf derivation}  if
\begin{equation}\mlabel{eq:rb}
\partial(x\cdot y)=\partial(x)\cdot y+x\cdot \partial(y)+\lambda \partial(x)\cdot \partial(y),\quad x, y\in A.
\end{equation}
In this case,    the triple $(A,\cdot,\pa)$ is called a {\bf \da \wt}, or simply a {\bf \da}.
\item
	Let $(A,\Delta)$ be a coalgebra. A linear map $\dh:A \rightarrow A$ is called a {\bf coderivation \wt} or simply a {\bf coderivation} if
\begin{equation}\mlabel{eq:corb}
	\Delta \dh=(\dh\otimes \id+\id \otimes \dh +\lambda \dh\otimes \dh)\Delta.
\end{equation}
In this case,    the triple $(A,\Delta,\dh)$ is called a {\bf \dca \wt}, or simply a {\bf \dca}.
\end{enumerate}
\end{defi}

\begin{defi} \mlabel{de:corb}
Let $\lambda\in \bfk$ be given.
\begin{enumerate}
\item
  An {\bf \adm \da \wt} or simply an  {\bf \adm \da} is  a quadruple $(A,\cdot,\pa,\dh)$, consisting of a \da $(A,\cdot,\pa)$  \wt and  a linear map $\dh:A\to A$ such that
\begin{align}
	\dh(x)\cdot y&=x\cdot\partial(y)+\dh(x\cdot y)+\lambda\dh(x\cdot\partial(y)),
	\mlabel{eq:pduqr} \\
	x\cdot\dh (y)&=\partial(x)\cdot y+\dh(x\cdot y)+\lambda\dh(\partial(x)\cdot y), \quad x, y\in A.
	\mlabel{eq:pduql}
\end{align}
\item
An {\bf \adm \dca \wt} or simply an {\bf \adm \dca} is a quadruple $(A,\Delta,\dh,\partial)$, consisting of a \dca $(A,\Delta,\dh)$ \wt  and a linear map $\partial:A \rightarrow A $ such that
\begin{align}
 (\partial \otimes \id)\Delta&=(\id\otimes\dh)\Delta+\Delta\partial+\lambda(\id\otimes\dh)\Delta\partial,
\mlabel{eq:pduqrd}\\
(\id\otimes\partial)\Delta&=(\dh\otimes \id)\Delta+\Delta\partial+\lambda(\dh\otimes \id)\Delta\partial.
\mlabel{eq:pduqld}
\end{align}
\end{enumerate}
\end{defi}
For a finite-dimensional vector space
$A$,  let $\dh:A \to A $ and $\Delta:A\to  A\ot A$ be linear maps. Denote by $\circ: A^*\ot A^*\to A^*$ the linear dual of $\Delta$. Then
$(A,\Delta,\dh)$ is a differential coalgebra \wt if and only if
$(A^*, \circ,\dh^*)$ is a differential algebra \wt.
Furthermore, we have
\begin{lem}\mlabel{lem:2.3}
With the notations as above,  $(A,\Delta,\dh,\partial)$ is an \adm \dca \wt if and only
if  $(A^*,\circ,\dh^*,\partial^*)$ is an \adm \da \wt.
\end{lem}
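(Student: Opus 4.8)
The plan is to build on the duality already recorded just before the statement: since $A$ is finite-dimensional, $(A,\Delta,\dh)$ is a differential coalgebra \wt if and only if $(A^*,\circ,\dh^*)$ is a differential algebra \wt, and the canonical identification $(A^*)^*=A$ sends $\Delta^{**}$, $\dh^{**}$, $\partial^{**}$ back to $\Delta$, $\dh$, $\partial$. Granting these underlying structures, it remains to show that the admissibility conditions \meqref{eq:pduqrd}--\meqref{eq:pduqld} for $(A,\Delta,\dh,\partial)$ are equivalent, under dualization, to the admissibility conditions \meqref{eq:pduqr}--\meqref{eq:pduql} for $(A^*,\circ,\dh^*,\partial^*)$; the converse implication then follows by applying the forward one to the dual datum and using $(A^*)^*=A$.

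First I would set up the dualization dictionary. For linear maps $f,g$ one has $(fg)^*=g^*f^*$; moreover $\Delta^*=\circ$, $(\id\otimes\dh)^*=\id\otimes\dh^*$, $(\dh\otimes\id)^*=\dh^*\otimes\id$, $(\id\otimes\partial)^*=\id\otimes\partial^*$, $(\partial\otimes\id)^*=\partial^*\otimes\id$, and $\circ(\partial^*\otimes\id)(a^*\otimes b^*)=\partial^*(a^*)\circ b^*$ (and similarly for the other one-sided maps). Applying $*$ to \meqref{eq:pduqrd} and evaluating the resulting equality of maps $A^*\otimes A^*\to A^*$ at $a^*\otimes b^*$, the left side $(\partial\otimes\id)\Delta$ becomes $\partial^*(a^*)\circ b^*$, while the three terms on the right become $a^*\circ\dh^*(b^*)$, $\partial^*(a^*\circ b^*)$, and $\lambda\,\partial^*\big(a^*\circ\dh^*(b^*)\big)$. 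Hence \meqref{eq:pduqrd} holds if and only if
\[
\partial^*(a^*)\circ b^*=a^*\circ\dh^*(b^*)+\partial^*(a^*\circ b^*)+\lambda\,\partial^*\big(a^*\circ\dh^*(b^*)\big),\qquad a^*,b^*\in A^*,
\]
which is precisely \meqref{eq:pduqr} for $(A^*,\circ,\dh^*,\partial^*)$, with $\dh^*$ playing the role of the derivation and $\partial^*$ the role of the auxiliary map in Definition~\mref{de:corb}. Dualizing \meqref{eq:pduqld} in the same way turns $(\id\otimes\partial)\Delta=(\dh\otimes\id)\Delta+\Delta\partial+\lambda(\dh\otimes\id)\Delta\partial$ into
\[
a^*\circ\partial^*(b^*)=\dh^*(a^*)\circ b^*+\partial^*(a^*\circ b^*)+\lambda\,\partial^*\big(\dh^*(a^*)\circ b^*\big),
\]
which is exactly \meqref{eq:pduql} for $(A^*,\circ,\dh^*,\partial^*)$. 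All steps are reversible, so this yields the claimed equivalence.

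The only delicate point — and the main, though minor, obstacle — is the bookkeeping of conventions: one must verify that under $*$ the coderivation $\dh$ becomes the derivation of the algebra $A^*$ while the auxiliary operator $\partial$ becomes the auxiliary operator $\partial^*$, so that the entries of "$(A^*,\circ,\dh^*,\partial^*)$ is an \adm \da \wt" appear in the order prescribed by Definition~\mref{de:corb}, and that each identity such as $(\id\otimes\dh)^*=\id\otimes\dh^*$ is applied on the correct tensor factor. Finite-dimensionality of $A$ is used only to supply the canonical isomorphism $(A^*)^*\cong A$, ensuring the statement is a genuine "if and only if".
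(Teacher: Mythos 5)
Your proposal is correct and matches the paper's intent: the paper dismisses this lemma with "It follows from a straightforward computation," and your argument is exactly that computation carried out — dualizing \meqref{eq:pduqrd}--\meqref{eq:pduqld} term by term to obtain \meqref{eq:pduqr}--\meqref{eq:pduql} for $(A^*,\circ,\dh^*,\partial^*)$, with the roles of derivation and auxiliary map correctly swapped to $\dh^*$ and $\partial^*$. No gaps; the bookkeeping you flag as the only delicate point is handled correctly.
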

\begin{proof}
It follows from a straightforward computation.
\end{proof}

Recall from~\cite{Bai1} the notion of  an \asi bialgebra.
\begin{defi}
\mlabel{de:bial}
An {\bf antisymmetric infinitesimal bialgebra} or
simply an {\bf \asi bialgebra} is a triple $(A,\cdot,\Delta)$
consisting of a vector space $A$ and linear maps $\cdot: A\ot A\to
A$ and $\Delta:A\to A\ot A$ such that
\begin{enumerate}
\item
the pair $(A,\cdot)$ is an associative algebra,
\item
the pair $(A,\Delta)$ is a coassociative coalgebra, and
\item
the following compatibility conditions hold.
 \begin{align}
 \Delta(x\cdot y)&=(R_A(y)\otimes \id)\Delta(x)+(\id\otimes L_A(x))\Delta(y),
 \mlabel{eq:3.14}\\
 (L_A(x)\otimes \id-\id\otimes R_A(x))\Delta(y)&=\sigma(\id\otimes R_A(y)-L_A(y)\otimes \id)\Delta(x), \quad  x, y\in A.
 \mlabel{eq:3.15}
 \end{align}
\end{enumerate}
\end{defi}

We now extend  the concept of an \asi bialgebra to the context of differential algebras \wt.

\begin{defi} \mlabel{de:dasib}
A {\bf differential antisymmetric infinitesimal bialgebra \wt}, or simply a {\bf differential \asi bialgebra} is a quintuple $(A,\cdot,\Delta,\partial,\dh)$ consisting of a vector space $A$ and linear maps
$$\cdot: A\ot A\to A,\ \ \Delta:A\to A\ot A,\ \  \partial,\dh:A\to A$$
such that
\begin{enumerate}
\item
$(A,\cdot,\Delta)$ is an \asi bialgebra,
\mlabel{it:rbasi1}
\item
$(A,\cdot,\partial,\dh)$  is an \adm \da \wt, and
\mlabel{it:rbasi2}
\item
$(A,\Delta,\dh,\partial)$ is an \adm \dca \wt.
\mlabel{it:rbasi3}
\end{enumerate}
\end{defi}

Here are two preliminary examples.
\begin{ex}
For a given scalar $0\neq\lambda\in\bfk$  and any \ASIb $(A,\cdot,\Delta)$,  the quintuple $(A,\cdot,\Delta,-\lambda^{-1}\id,-\lambda^{-1}\id)$ is a \dASIb \wt.
\end{ex}

\begin{ex}
	Let $(A,\cdot_A,\Delta_A)$ and $(B,\cdot_B,\Delta_B)$ be ASI bialgebras. Define
	$$\cdot:=\cdot_A+\cdot_B:(A\oplus B)\ot (A\oplus B)\to A\oplus B, \quad(a_1+b_1)\ot (a_2+b_2)\mapsto a_1\cdot_A a_2+b_1\cdot_B b_2$$
	and
	\begin{eqnarray*}
		&&\Delta:=\Delta_A+\Delta_B:A\oplus B\to (A\oplus B)\ot (A\oplus B),\\
		&&\quad\quad\quad\quad(a+b)\mapsto \Delta_A(a)+\Delta_B(b)= \sum_{a}a_{(1)}\ot a_{(2)}+\sum_{b}b_{(1)}\ot b_{(2)}\in (A\oplus B)\ot (A\oplus B).
	\end{eqnarray*}
	Then by ~\cite[Example ~2.6]{BGM24}, $(A \oplus B,\cdot,\Delta)$ is an ASI bialgebra. Let $\partial_A , \partial_B : A \oplus B \rightarrow A \oplus B$ be the projections to $A$ and $B$, respectively.
	First,  we compute $$\partial_A((a_1+b_1)\cdot (a_2+b_2))=a_1\cdot_A a_2=\partial_A(a_1+b_1)\cdot (a_2+b_2)+(a_1+b_1)\cdot\partial_A(a_2+b_2) -\partial_A(a_1+b_1)\cdot\partial_A(a_2+b_2),$$
	proving that $(A\oplus B, \cdot, \partial_A)$ is a \da of weight $-1$.
	
	Second,  we verify
	$$\Delta\partial_A(a+b)=\Delta_A(a)=(\partial_A \ot \id_{A\oplus B}+\id_{A\oplus B}\ot\partial_A-\partial_A \ot\partial_A)\Delta (a+b),$$
	showing that $(A\oplus B, \Delta, \partial_A)$ is a \dca of weight $-1$.
	
Third, the identity $$\partial_A(a_1+b_1)\cdot (a_2+b_2)=a_1\cdot_Aa_2=(a_1+b_1)\cdot\partial_A (a_2+b_2)+\partial_A((a_1+b_1)\cdot (a_2+b_2))-\partial_A\Big((a_1+b_1)\cdot\partial_A (a_2+b_2)\Big),$$
yields the validation of Eq.~\meqref{eq:pduqr}. 
	
Similarly, we can get Eq.~\meqref{eq:pduql}. Since
	$$( \partial_A\ot\id_{A\oplus B})\Delta (a+b)=\Delta_A(a)=((\id_{A\oplus B}\ot\partial_A)\Delta+\Delta\partial_A- (\id_{A\oplus B}\ot\partial_A)\Delta\partial_A)(a+b),$$
	we have Eq.~\meqref{eq:pduqrd}. Similarly, Eq.~\meqref{eq:pduqld} holds.
	Thus the quintuple $(A \oplus B,\cdot_A + \cdot_B,\Delta_A + \Delta_B,\partial_A,\partial_A)$ is a \dASIb of weight $-1$. By the same way, we can prove that $(A \oplus B,\cdot_A + \cdot_B,\Delta_A + \Delta_B,\partial_B,\partial_B)$ is a \dASIb of weight $-1$.
\end{ex}

\subsection{Representations of \das \wt}

Let $A$ be an algebra. A {\bf representation} of $A$ is a triple $(V,\ell,r)$, abbreviated as $V$, consisting of a vector space $V$ and linear maps
$\ell, r: A\to  \End (V)$
such that
\begin{equation}\mlabel{eq:11rep}
\ell(xy)v=\ell(x)\ell(y)v,\,
r(xy)v=r(y)r(x)v,\,
 \ell(x)r(y)v=r(y)\ell(x)v, \quad x,y\in A, v\in V.
\end{equation}

For instance, the triple $(A, L, R)$ is a representation of $A$, called the {\bf adjoint representation} of $A$.
Let $\ell, r: A\to \End (V)$ be linear maps.
Define a multiplication on $A\oplus V$ by
 \begin{equation}
 (x+u)\at (y+v):=xy+(\ell(x)v+r(y)u ),\quad x, y\in A, u, v \in V.
 \mlabel{eq:2.3}
 \end{equation}
As is well known, $A\oplus V$ is an
algebra, denoted by $A\ltimes_{\ell,r} V$ and called
the {\bf semi-direct product algebra} of $A$ by $V$, if and only if $(V,\ell,r)$ is a
representation of $A$.

\begin{defi}\mlabel{de:2.8}Let $(A,\cdot,\alpha)$ be a \da \wt.
\begin{enumerate}
\item
A {\bf representation} of a \da  $(A,\cdot,\partial)$  is a quadruple $(V, \ell, r,
\theta)$, where $(V, \ell, r)$ is a representation of $A$ and  $\theta:V \rightarrow V$ is a
 linear map such that
 \begin{align}
 \theta(\ell(x)v)&=\ell(\partial(x))v+\ell(x)\theta(v)+\lambda \ell(\partial(x))\theta(v),
\mlabel{eq:2.1}\\
 \theta(r(x)v)&=r(\partial(x))v+r(x)\theta(v)+\lambda r(\partial(x))\theta(v), \quad x\in A, v\in V. \mlabel{eq:2.2}
 \end{align}
\item\mlabel{it:282}
Two representations $(V_1, \ell_1, r_1, \theta_1)$ and $(V_2,
\ell_2, r_2, \theta_2)$ of a \da $(A,\cdot, \partial)$  are
called {\bf equivalent} if there exists a linear
isomorphism $\varphi:V_1\rightarrow V_2$ such that
 \begin{equation} \mlabel{eq:282}
 \varphi (\ell_1(x)v)=\ell_2(x)\varphi(v),\, \varphi(r_1(x)v)=r_2(x)\varphi(v),\, \varphi (\theta_1 v)=\theta_2\varphi(v) ,\quad x\in A, v\in V_1.
 \end{equation}
\end{enumerate}
 \end{defi}

\begin{pro}\mlabel{pro:2.2}
Let $(A,\partial)$ be a \da \wt.
Let
$(V, \ell, r)$ be a representation of the associative algebra $A$ and let
 $\theta:V \rightarrow V$ be a linear map.
Define a linear map
 \begin{equation}
 \partial_{A\oplus V}: A\oplus V\to  A\oplus V, \quad  \partial_{A\oplus V}(x+u):=\partial(x)+\theta(u),
\mlabel{eq:2.4}
 \end{equation}
which is simply denoted by $\partial_{A\oplus V}:=\partial+\theta$.
 Then together with the multiplication defined in Eq.~\meqref{eq:2.3}  , $(A\oplus V, \partial_{A\oplus V})$ is a \da  if and only if $(V,\ell,r,\theta)$ is a representation of $(A,\partial)$.
 The resulting \da  is denoted by $(A\ltimes_{\ell,r} V, \partial+\theta)$ and  called the {\bf semi-direct product differential algebra} of $(A,\partial)$ associated to $(V,\ell,r,\theta)$.
 \end{pro}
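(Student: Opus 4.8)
The plan is to reduce the statement to the already-established fact that $(A\oplus V,\at)$ is an associative algebra and then to analyze, componentwise, when the weight-$\lambda$ derivation identity \meqref{eq:rb} holds for $\partial_{A\oplus V}=\partial+\theta$. Since $(V,\ell,r)$ is a representation of the associative algebra $A$, the product \meqref{eq:2.3} makes $A\oplus V$ into the semi-direct product algebra $A\ltimes_{\ell,r}V$; hence $(A\oplus V,\at,\partial_{A\oplus V})$ is a differential algebra of weight $\lambda$ if and only if $\partial_{A\oplus V}$ satisfies \meqref{eq:rb} on $A\ltimes_{\ell,r}V$. So the whole proof amounts to that single computation.

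Next I would fix $x,y\in A$ and $u,v\in V$ and expand both sides of
$$\partial_{A\oplus V}\big((x+u)\at(y+v)\big)=\partial_{A\oplus V}(x+u)\at(y+v)+(x+u)\at\partial_{A\oplus V}(y+v)+\lambda\,\partial_{A\oplus V}(x+u)\at\partial_{A\oplus V}(y+v)$$
using \meqref{eq:2.3} and \meqref{eq:2.4}. The left-hand side equals $\partial(xy)+\theta(\ell(x)v)+\theta(r(y)u)$, while the right-hand side equals
$$\big(\partial(x)y+x\partial(y)+\lambda\,\partial(x)\partial(y)\big)+\big(\ell(\partial(x))v+\ell(x)\theta(v)+\lambda\,\ell(\partial(x))\theta(v)+r(\partial(y))u+r(y)\theta(u)+\lambda\,r(\partial(y))\theta(u)\big),$$
where the first summand lies in $A$ and the second in $V$. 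The $A$-components agree precisely because $\partial$ is a derivation of weight $\lambda$ on $(A,\cdot)$, which is part of the hypothesis, so the whole identity is equivalent to its $V$-component
$$\theta(\ell(x)v)+\theta(r(y)u)=\ell(\partial(x))v+\ell(x)\theta(v)+\lambda\,\ell(\partial(x))\theta(v)+r(\partial(y))u+r(y)\theta(u)+\lambda\,r(\partial(y))\theta(u).$$

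Finally, because $u$ and $v$ range over $V$ independently, this $V$-component identity holds for all $u,v$ if and only if it holds in the two special cases $u=0$ and $v=0$ separately; the case $u=0$ is exactly \meqref{eq:2.1}, the case $v=0$ is exactly \meqref{eq:2.2}, and conversely adding \meqref{eq:2.1} and \meqref{eq:2.2} recovers the full $V$-component identity. This gives the claimed equivalence and simultaneously identifies the resulting structure as the semi-direct product differential algebra. I do not anticipate a genuine obstacle: the argument is a bounded and essentially forced bookkeeping computation, and the only point meriting a little care is the clean split of the $A\oplus V$ identity first into its $A$- and $V$-parts and then of the $V$-part into its "$\ell$" and "$r$" pieces via the independence of $u$ and $v$.
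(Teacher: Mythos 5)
Your proof is correct and is exactly the ``straightforward verification'' that the paper leaves to the reader: reduce to the weight-$\lambda$ derivation identity on $A\ltimes_{\ell,r}V$, split it into $A$- and $V$-components, and split the $V$-component into Eqs.~\meqref{eq:2.1} and \meqref{eq:2.2} via the independence of $u$ and $v$. The computation checks out in both directions.
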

\begin{proof}	It follows from a straightforward verification.
\end{proof}

\begin{lem}
\mlabel{lem:admrep}
Let  $(A,\partial)$ be a \da \wt.  Let $(V, \ell, r)$ be a representation of the algebra $A$ and let $\theta:V\to V$ be a linear map.
The quadruple $(V^*,r^*,\ell^*,\theta^*)$ is a representation of $(A,\partial)$ if and only if  $\theta$ satisfies
\begin{eqnarray}
& r(x)\theta(v)-r(\partial(x))v-\theta(r(x)v)-\lambda\theta(r(\partial(x))v)=0,& \mlabel{eq:it:2.3a} \\
& \ell(x)\theta(v)-\ell(\partial(x))v-\theta(\ell(x)v)-\lambda \theta(\ell(\partial(x))v)=0,&  x\in A, v\in V.
\mlabel{eq:it:2.3b}
\end{eqnarray}
\end{lem}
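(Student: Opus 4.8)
The plan is to reduce the statement to dualizing the two differential-compatibility axioms of Definition~\ref{de:2.8}(a). First I would recall the standard fact that, since $(V,\ell,r)$ is a representation of the associative algebra $A$, the triple $(V^*,r^*,\ell^*)$ is automatically a representation of $A$: dualizing the three identities in \eqref{eq:11rep} and using $(\psi\phi)^*=\phi^*\psi^*$ turns $\ell(xy)=\ell(x)\ell(y)$ into $r^*(xy)\cdots$ — more precisely, the left action $r^*$ and the right action $\ell^*$ satisfy \eqref{eq:11rep} because taking adjoints reverses composition and interchanges the roles of $\ell$ and $r$. Consequently, by Definition~\ref{de:2.8}(a), the quadruple $(V^*,r^*,\ell^*,\theta^*)$ is a representation of the \da $(A,\partial)$ \emph{if and only if} $\theta^*$ satisfies the images of \eqref{eq:2.1}--\eqref{eq:2.2} under the substitutions $\ell\mapsto r^*$, $r\mapsto\ell^*$, $\theta\mapsto\theta^*$, namely
\begin{align*}
\theta^*(r^*(x)v^*)&=r^*(\partial(x))v^*+r^*(x)\theta^*(v^*)+\lambda\, r^*(\partial(x))\theta^*(v^*),\\
\theta^*(\ell^*(x)v^*)&=\ell^*(\partial(x))v^*+\ell^*(x)\theta^*(v^*)+\lambda\, \ell^*(\partial(x))\theta^*(v^*),
\end{align*}
for all $x\in A$ and $v^*\in V^*$.

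Next I would rewrite each of these as an operator identity on $V^*$ and dualize it back to $V$. Using $r^*(x)=(r(x))^*$ and $(\psi\phi)^*=\phi^*\psi^*$, the left-hand side of the first equation equals $(r(x)\theta)^*(v^*)$, while the right-hand side equals $\big(r(\partial(x))+\theta\, r(x)+\lambda\,\theta\, r(\partial(x))\big)^*(v^*)$. Hence the first condition holds for all $v^*$ precisely when
$$\big(r(x)\theta-r(\partial(x))-\theta\, r(x)-\lambda\,\theta\, r(\partial(x))\big)^*=0\quad\text{on }V^*,$$
and since the natural pairing $\langle\,,\,\rangle$ is nondegenerate and $V$ is finite-dimensional, $\psi^*=0$ is equivalent to $\psi=0$; evaluating on $v\in V$ gives exactly Eq.~\eqref{eq:it:2.3a}. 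Running the identical argument on the second condition, with $\ell$ replacing $r$, produces Eq.~\eqref{eq:it:2.3b}. Combining the two equivalences with the preliminary observation that $(V^*,r^*,\ell^*)$ is always a representation of $A$ completes the proof.

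The computation is entirely routine, so there is no real obstacle; the one place to be careful is the bookkeeping of adjoints: passing to the dual representation of an associative algebra swaps $\ell$ and $r$, and taking adjoints reverses composition, so a term such as $r^*(x)\theta^*(v^*)$ must be read as $(\theta\, r(x))^*(v^*)$, not $(r(x)\theta)^*(v^*)$. Once the adjoints are taken in the correct order, the axioms \eqref{eq:2.1}--\eqref{eq:2.2} for $(V^*,r^*,\ell^*,\theta^*)$ transcribe verbatim into \eqref{eq:it:2.3a}--\eqref{eq:it:2.3b}, and the rest is a direct application of $\langle\psi^*(w^*),v\rangle=\langle w^*,\psi(v)\rangle$.
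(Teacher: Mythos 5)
Your argument is correct and is essentially the paper's own proof: both reduce the claim to checking that $\theta^*$ satisfies the two compatibility conditions of Definition~\ref{de:2.8} for the dual representation $(V^*,r^*,\ell^*)$, and then transfer each condition back to $V$ via the nondegenerate pairing $\langle\psi^*(v^*),v\rangle=\langle v^*,\psi(v)\rangle$ (the paper writes this as a single pairing identity, you phrase it as $\psi^*=0\Leftrightarrow\psi=0$ for the relevant operator). Your cautionary remark about the order of adjoints, e.g.\ $r^*(x)\theta^*=(\theta\,r(x))^*$, is exactly the bookkeeping the paper's displayed computation encodes, so nothing is missing.
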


\begin{proof}
Since $(V^*, r^*, \ell^*)$ is a representation of $A$, we just need to show that $\theta^*$ satisfies Eqs.~\meqref{eq:2.1} and ~\meqref{eq:2.2}.
Indeed, we have
\vspace{-.2cm}
\begin{eqnarray*}
&&\langle \theta^*(r^*(x)v^*)-r^*(\partial(x))v^*-r^*(x)\theta^* (v^*)-\lambda r^*(\partial(x))\theta^*(v^*), v \rangle \\
&=& \langle  v^*, r(x)\theta(v)-r(\partial(x))v-\theta(r(x)v)-\lambda\theta(r(\partial(x))v)\rangle,
\quad x\in A, v\in V, v^*\in V^*.
\end{eqnarray*}
Thus, Eq.~\meqref{eq:2.1} is equivalent to
Eq.~\meqref{eq:it:2.3a}. The same argument proves that Eq.~\meqref{eq:2.2} is equivalent to
Eq.~\meqref{eq:it:2.3b}.
\end{proof}

\begin{pro}\mlabel{pro:2.11}
\begin{enumerate}
\item\mlabel{it:211}
Let $(A,\cdot,\pa)$ be a \da \wt. Then $(A,L_\cdot,R_\cdot,\pa)$ is a representation of $(A,\cdot,\pa)$,  called the {\bf adjoint representation} of $(A,\cdot,\partial)$.
\item \mlabel{it:212}
Furthermore, let $\dh:A\to A$ be a linear map. Then $(A,\cdot,\pa,\dh)$ is an  \adm \da \wt if and only if $(A^*, R^*_\cdot, L^*_\cdot, \dh^*)$ is a representation of  $(A,\cdot,\pa)$.
\end{enumerate}
\end{pro}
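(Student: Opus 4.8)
The plan is to treat the two parts separately: (a) is a direct unwinding of the definition of a representation of a differential algebra \wt (Definition~\mref{de:2.8}), and (b) is an application of Lemma~\mref{lem:admrep} to the adjoint representation of the underlying associative algebra.

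For (a), recall first that $(A,L_\cdot,R_\cdot)$ is the adjoint representation of the associative algebra $(A,\cdot)$: for $\ell=L_\cdot$ and $r=R_\cdot$, the three identities in~\meqref{eq:11rep} reduce to the associativity of $\cdot$. It then remains to verify Eqs.~\meqref{eq:2.1} and~\meqref{eq:2.2} with $(\ell,r,\theta)=(L_\cdot,R_\cdot,\pa)$. Unravelling the definitions, Eq.~\meqref{eq:2.1} becomes $\pa(x\cdot v)=\pa(x)\cdot v+x\cdot\pa(v)+\lambda\,\pa(x)\cdot\pa(v)$ and Eq.~\meqref{eq:2.2} becomes $\pa(v\cdot x)=\pa(v)\cdot x+v\cdot\pa(x)+\lambda\,\pa(v)\cdot\pa(x)$; both are precisely the weight-$\lambda$ derivation condition~\meqref{eq:rb}, which holds by hypothesis. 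Hence $(A,L_\cdot,R_\cdot,\pa)$ is a representation of $(A,\cdot,\pa)$.

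For (b), I would apply Lemma~\mref{lem:admrep} with $(V,\ell,r)=(A,L_\cdot,R_\cdot)$ the adjoint representation of $(A,\cdot)$ and with $\theta=\dh$; then $(V^*,r^*,\ell^*,\theta^*)=(A^*,R^*_\cdot,L^*_\cdot,\dh^*)$. The lemma states that $(A^*,R^*_\cdot,L^*_\cdot,\dh^*)$ is a representation of $(A,\pa)$ if and only if $\dh$ satisfies Eqs.~\meqref{eq:it:2.3a} and~\meqref{eq:it:2.3b}. Substituting $\ell=L_\cdot$ and $r=R_\cdot$ and relabelling the variables, Eq.~\meqref{eq:it:2.3a} reads $\dh(x)\cdot y=x\cdot\pa(y)+\dh(x\cdot y)+\lambda\,\dh(x\cdot\pa(y))$, which is exactly Eq.~\meqref{eq:pduqr}, and Eq.~\meqref{eq:it:2.3b} reads $x\cdot\dh(y)=\pa(x)\cdot y+\dh(x\cdot y)+\lambda\,\dh(\pa(x)\cdot y)$, which is exactly Eq.~\meqref{eq:pduql}. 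Since $(A,\cdot,\pa)$ is already a differential algebra \wt, the simultaneous validity of Eqs.~\meqref{eq:pduqr} and~\meqref{eq:pduql} is precisely the condition that $(A,\cdot,\pa,\dh)$ be an \adm \da \wt in the sense of Definition~\mref{de:corb}, and the asserted equivalence follows.

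There is essentially no obstacle here. The only step demanding care is the bookkeeping inside Lemma~\mref{lem:admrep}: one must track that the dualization $(\ell,r)\mapsto(r^*,\ell^*)$ sends the adjoint representation to the pair $(R^*_\cdot,L^*_\cdot)$ in the order stated, and that after this swap and the renaming of variables Eq.~\meqref{eq:it:2.3a} matches Eq.~\meqref{eq:pduqr} while Eq.~\meqref{eq:it:2.3b} matches Eq.~\meqref{eq:pduql}. Once the indices are aligned, both parts are immediate.
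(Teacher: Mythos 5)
Your proposal is correct and follows essentially the same route as the paper: part (a) is the observation that Eqs.~\meqref{eq:2.1}--\meqref{eq:2.2} for $(L_\cdot,R_\cdot,\pa)$ reduce to the weight-$\lambda$ derivation identity, and part (b) is Lemma~\mref{lem:admrep} applied to the adjoint representation, with Eqs.~\meqref{eq:it:2.3a}--\meqref{eq:it:2.3b} matching Eqs.~\meqref{eq:pduqr}--\meqref{eq:pduql}. Your bookkeeping of the swap $(\ell,r)\mapsto(r^*,\ell^*)$ and the variable relabelling is accurate.
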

\begin{proof}(\mref{it:211}) follows by the fact that $\pa$ is a derivation \wt.

\smallskip
\noindent
(\mref{it:212}) follows from Lemma~\mref{lem:admrep} and Eqs.~\meqref{eq:pduqr} and \meqref{eq:pduql}.
\end{proof}

\subsection{Characterizations of  \dASIbs \wt}
\mlabel{subsec:char}
This section presents the notion of a matched pair of differential algebras \wt, which is equivalent to a direct sum of two differential algebras \wt. Then the concept of a  double construction of differential Frobenius algebras \wt is given. As a consequence, any differential \asi bialgebra \wt is proved to be equivalent to some special  matched pair of differential algebras \wt and a double construction of  differential Frobenius algebra \wt, respectively. Under two compatibility conditions, we show that a differential \asi bialgebra of weight $0$ induces a \nvbia.

We first recall the concept of a matched pair of algebras.

\begin{defi}\mlabel{de:match}
A {\bf matched pair of algebras} consists of
algebras $(A, \opa)$ and $(B, \opb)$, together with
linear maps $\ell_A, r_A: A\to  \End (B)$ and $\ell_B,
r_B: B\to  \End (A)$ such that
\begin{enumerate}
\item
$(A, \ell_B, r_B)$ is a representation of $(B, \opb)$, 
\item
$(B, \ell_A, r_A)$ is a representation of $(A, \opa)$, and
\item
 the following compatibility conditions hold: for all $a, a'\in A$ and $b, b'\in B$,
 \begin{eqnarray*}
& \ell_A(a)(b\opb b')=\ell_A(r_B(b)~a )b'+(\ell_A(a)b)\opb b',&
\\
& r_A(a)~(b\opb b')=r_A(\ell_B(b')a)~b+b\opb (r_A(a)~b' ),&
\\
& \ell_B(b)(a\opa a')=\ell_B(r_A(a)~b)a'+(\ell_B(b)a)\opa a',&
\\
& r_B(b)(a\opa a')=r_B(\ell_A(a')b)~a+a\opa (r_B(b)~a'),&
 \\
& \ell_A(\ell_B(b)a)b'+(r_A(a)~b)\opb b'=r_A(r_B(b')~a)~b+b\opb (\ell_A(a)b'),&
\\
& \ell_B(\ell_A(a)b)a'+(r_B(b)~a)\opa a'=r_B(r_A(a')~b)~a+a\opa (\ell_B(b)a').&
\end{eqnarray*}
\end{enumerate}
\end{defi}
With the notations as above, we define a multiplication on the direct
sum $A\oplus B$ by
 \begin{equation}
 (a+b)\star (a'+b'):=(a\opa a'+r_B(b')a +\ell_B(b) a')+(b\opb b'+\ell_A(a) b'+r_A(a')b ).
\mlabel{eq:3.7}
 \end{equation}
Then $(A\oplus B, \star)$ is an algebra if and only if $((A,\opa),(B,\opb), \ell_A, r_A,$ $\ell_B,r_B)$ is a matched pair of $(A,\opa)$ and $(B,\opb)$.
The resulting algebra $(A\oplus B,\star)$ is denoted by $A\bowtie_{\ell_A,r_A}^{\ell_B,r_B} B$ or simply
$A\bowtie B$.
We now give the notion of a matched pair of \das \wt.
\begin{defi}\mlabel{de:3.3}
A {\bf matched pair of \das \wt} is a sextuple  $((A, \partial_A), (B, \partial_B),$ $\ell_A, r_A,$ $\ell_B, r_B)$, where $(A, \partial_A)$, $(B, \partial_B)$ are \das \wt, $(B, \ell_A, r_A, \partial_B)$ is a representation of $(A,\partial_A)$, $(A, \ell_B, r_B$, $\partial_A)$ is a representation of $(B, \partial_B)$,
and $(A,B, \ell_A,r_A,\ell_B,r_B)$ is a matched pair of algebras.
\end{defi}

There is a characterization of \asi bialgebras by matched pairs of algebras.
\vspace{-.1cm}
\begin{thm}\cite[Theorem 2.2.3]{Bai1} \mlabel{thm:md}
Let $(A, \cdot)$ be an algebra. Suppose that there is an algebra
$(A^*, \circ)$ on the linear dual $A^*$. Let $\Delta:A\to
A\ot A$ be the linear dual of $\circ$. Then $(A,\cdot, \Delta)$ is an \asi bialgebra if and only if
$(A,A^*, {R_\cdot}^*, {L_\cdot}^*, {R_\circ}^*,
{L_\circ}^*)$ is a matched pair of algebras.
\end{thm}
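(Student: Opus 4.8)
The plan is to separate Definition~\ref{de:match} into the parts that are automatic and the part that carries the content. First I would observe that, since $A$ is finite-dimensional and $\Delta$ is the linear dual of $\circ$, the coalgebra $(A,\Delta)$ is coassociative if and only if $(A^*,\circ)$ is an associative algebra; hence the coalgebra clause of Definition~\ref{de:bial} is automatically in force once we are given that $(A^*,\circ)$ is an algebra, and it can be set aside. Next I would check that the quadruple $(R_\cdot^*, L_\cdot^*, R_\circ^*, L_\circ^*)$ always satisfies hypotheses (a) and (b) of Definition~\ref{de:match}: transposing the three representation identities of Eq.~\eqref{eq:11rep} for the adjoint representation $(A,L_\cdot,R_\cdot)$ shows that $(A^*, R_\cdot^*, L_\cdot^*)$ is a representation of $(A,\cdot)$, and the identical computation carried out inside $(A^*,\circ)$ shows that $(A, R_\circ^*, L_\circ^*)$ is a representation of $(A^*,\circ)$. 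So for these particular maps, being a matched pair reduces to verifying only the six compatibility identities in part (c).

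The main step is then to translate those six identities into coalgebra language. For each of them (with $\ell_A=R_\cdot^*$, $r_A=L_\cdot^*$, $\ell_B=R_\circ^*$, $r_B=L_\circ^*$) I would pair both sides against arbitrary elements of $A$ and $A^*$ and rewrite everything using $\langle x, a^*\circ b^*\rangle=\langle\Delta(x), a^*\ot b^*\rangle$ together with the definitions of the transposed operators. Under this dictionary every left/right multiplication on $A^*$ becomes, after transposition, $\Delta$ followed by $\id$, $L_\cdot$, or $R_\cdot$ acting in a single tensor slot, and products in $A^*$ turn into applications of $\Delta$. I expect the six identities to be far from independent: the ones not mixing the two ``outer'' operators should each reduce to Eq.~\eqref{eq:3.14}, $\Delta(x\cdot y)=(R_\cdot(y)\ot\id)\Delta(x)+(\id\ot L_\cdot(x))\Delta(y)$, while the two ``mixed'' conditions (the fifth and sixth in Definition~\ref{de:match}) should reduce to Eq.~\eqref{eq:3.15}, with the flip $\sigma$ there arising precisely from interchanging the roles of $A$ and $A^*$ in the pairing. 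Collecting these equivalences then gives that $(A,A^*,R_\cdot^*,L_\cdot^*,R_\circ^*,L_\circ^*)$ is a matched pair of algebras if and only if Eqs.~\eqref{eq:3.14} and \eqref{eq:3.15} hold, i.e.\ if and only if $(A,\cdot,\Delta)$ is an \asi bialgebra.

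The hard part will be the bookkeeping in that last step: keeping straight which tensor leg each transposed operator acts on, and checking that the flip $\sigma$ in Eq.~\eqref{eq:3.15} is exactly what the symmetry of the $A\leftrightarrow A^*$ pairing produces rather than something off by a transposition. Since this is precisely the computation behind~\cite[Theorem 2.2.3]{Bai1}, in practice I would carry it out following that reference, together with the parallel dualization arguments already used in the proof of Theorem~\ref{thm:mpmt} of the present paper.
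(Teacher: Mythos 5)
Your proposal is correct and follows exactly the argument of the cited source \cite[Theorem 2.2.3]{Bai1}, which is all the paper itself offers for this statement (it is quoted without proof): conditions (a) and (b) of Definition~\ref{de:match} hold automatically because $(A^*,R_\cdot^*,L_\cdot^*)$ and $(A,R_\circ^*,L_\circ^*)$ are the coadjoint representations, and pairing the six compatibility identities against $A\ot A^*$ translates the unmixed ones into Eq.~\eqref{eq:3.14} (for $A$, and its $A^*$-version, which are adjoint to each other) and the two mixed ones into Eq.~\eqref{eq:3.15}. No gaps; the remaining work is the routine bookkeeping you identify.
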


We extend this property to the context of \das \wt.
\begin{thm}
Let $(A,\partial_A)$ and $(B,\partial_B)$ be \das \wt and let $(A,B, \ell_A,$ $ r_A,\ell_B, r_B)$ be a matched pair of the algebras $A$ and $B$. On the resulting algebra $A\bowtie B$ from Eq.~\meqref{eq:3.7}, define the linear map
 \begin{equation}
\partial_{A\oplus B}:A\bowtie B\to A\bowtie B,  \quad \partial_{A\oplus B}(a+b):=\partial_A(a)+\partial_B(b),\quad a\in A, b\in B.
 \mlabel{eq:3.8}
 \end{equation}
Then the pair $(A\bowtie B,\partial_{A\oplus B})$ is a
\da \wt if and only if $((A,
\partial_A),$ $ (B, \partial_B), \ell_A, r_A,$ $\ell_B, r_B)$ is a matched pair
of  $(A,\partial_A)$ and $(B,\partial_B)$.
\mlabel{thm:3.4}
 \end{thm}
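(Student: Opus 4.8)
The plan is to reduce the claim to the matched-pair statement for the underlying algebras together with the two representation conditions from Definition~\mref{de:2.8}. Recall that, by the discussion preceding Definition~\mref{de:3.3}, $(A\oplus B,\star)$ is an algebra if and only if $(A,B,\ell_A,r_A,\ell_B,r_B)$ is a matched pair of the algebras $A$ and $B$; this is part of the hypothesis. Since $(A,\opa,\partial_A)$ and $(B,\opb,\partial_B)$ are assumed to be \das \wt, by Definition~\mref{de:3.3} it suffices to show that, under these hypotheses, the map $\partial_{A\oplus B}=\partial_A+\partial_B$ of Eq.~\meqref{eq:3.8} is a derivation \wt on $A\bowtie B$ if and only if $(B,\ell_A,r_A,\partial_B)$ is a representation of $(A,\partial_A)$ and $(A,\ell_B,r_B,\partial_A)$ is a representation of $(B,\partial_B)$.

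First I would expand, for all $a,a'\in A$ and $b,b'\in B$, both sides of the weight-$\lambda$ derivation identity
\[
\partial_{A\oplus B}\big((a+b)\star(a'+b')\big)=\partial_{A\oplus B}(a+b)\star(a'+b')+(a+b)\star\partial_{A\oplus B}(a'+b')+\lambda\,\partial_{A\oplus B}(a+b)\star\partial_{A\oplus B}(a'+b')
\]
using the product formula Eq.~\meqref{eq:3.7} and Eq.~\meqref{eq:3.8}, and then project onto the $A$-summand and the $B$-summand separately. In the $A$-summand, the terms involving only $\opa$ reduce to $\partial_A(a\opa a')=\partial_A(a)\opa a'+a\opa\partial_A(a')+\lambda\,\partial_A(a)\opa\partial_A(a')$, which holds because $(A,\opa,\partial_A)$ is a \da \wt; the terms in which $r_B(b')$ acts on $a$ collect exactly into Eq.~\meqref{eq:2.2} for the representation $(A,\ell_B,r_B)$ of $B$ with $\theta=\partial_A$, $x=b'$, $v=a$; and the terms in which $\ell_B(b)$ acts on $a'$ collect exactly into Eq.~\meqref{eq:2.1} for $(A,\ell_B,r_B)$ with $\theta=\partial_A$, $x=b$, $v=a'$. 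By symmetry, the $B$-summand splits into the derivation identity for $(B,\opb,\partial_B)$ together with Eqs.~\meqref{eq:2.1} and~\meqref{eq:2.2} for $(B,\ell_A,r_A)$ with $\theta=\partial_B$.

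To obtain the ``if and only if'' I would note that the three groups of terms in each summand can be isolated from one another by specialization: taking $b=b'=0$ recovers the pure derivation identity, taking $a=b'=0$ (resp.\ $a'=b=0$) isolates the $\ell_B$ (resp.\ $r_B$) identity in the $A$-summand, and similarly for the $B$-summand. Hence $\partial_{A\oplus B}$ is a derivation \wt precisely when all of these identities hold; since $(A,\ell_B,r_B)$ and $(B,\ell_A,r_A)$ are already representations of the underlying algebras, this is equivalent to $(A,\ell_B,r_B,\partial_A)$ being a representation of $(B,\partial_B)$ and $(B,\ell_A,r_A,\partial_B)$ being a representation of $(A,\partial_A)$, which, combined with the given matched pair of algebras and the \da structures on $A$ and $B$, is exactly the assertion that $((A,\partial_A),(B,\partial_B),\ell_A,r_A,\ell_B,r_B)$ is a matched pair of \das \wt.

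The only real work is this expansion, and the one point to watch is the bookkeeping: organizing the six summands of $(a+b)\star(a'+b')$ and the three summands on the right so that the cancellations coming from the \da axioms of $A$ and $B$ are transparent and the leftover terms visibly assemble into Eqs.~\meqref{eq:2.1}--\meqref{eq:2.2}. There is no conceptual obstacle; the argument is a differential-weight refinement of the matched-pair computations behind Theorem~\mref{thm:md}.
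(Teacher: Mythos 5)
Your proof is correct and follows essentially the same route as the paper, which simply defers to the analogous direct verification in the cited reference (\cite[Theorem 2.13]{LLB23}): expanding the weight-$\lambda$ derivation identity on $A\bowtie B$, projecting onto the two summands, and matching the resulting terms with the derivation axioms for $\partial_A,\partial_B$ and Eqs.~\meqref{eq:2.1}--\meqref{eq:2.2}. Your specialization argument for isolating the individual identities correctly delivers the ``only if'' direction, so nothing is missing.
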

\vspace{-.2cm}
\begin{proof}
It is similar to the proof of ~\cite[Theorem 2.13]{LLB23}.
\end{proof}

We now give the relation between differential \asi bialgebras between matched pairs of differential algebras.
\begin{thm} \mlabel{thm:3.5}
Let $(A, \cdot,\partial)$ be a \da \wt. Suppose that
 $(A^*, \circ,\dh^*)$ is a \da \wt. Let $\Delta:A\rightarrow A\otimes A$ be the linear dual of
$\circ$.
Then the quintuple
$(A,\cdot,\Delta,\partial,\dh)$ is a differential \asi bialgebra \wt if and only if
the sextuple $((A,\partial),(A^*,\dh^*), {R_\cdot}^*, {L_\cdot}^*, {R_\circ}^*, {L_\circ}^*)$ is a matched pair of \das \wt.
\end{thm}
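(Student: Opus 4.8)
The plan is to reduce both sides of the asserted equivalence to the \emph{same} list of three conditions, using the dictionary already built in the excerpt between (co)derivations, representations, and linear duals. Beyond Theorem~\ref{thm:md}, Proposition~\ref{pro:2.11} and Lemma~\ref{lem:2.3}, no new computation should be needed; the work is entirely organizational.

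First I would unfold the left-hand side. By Definition~\ref{de:dasib}, $(A,\cdot,\Delta,\partial,\dh)$ is a differential \asi bialgebra \wt exactly when (1) $(A,\cdot,\Delta)$ is an \asi bialgebra, (2) $(A,\cdot,\partial,\dh)$ is an \adm \da \wt, and (3) $(A,\Delta,\dh,\partial)$ is an \adm \dca \wt. Condition (1) is, by Theorem~\ref{thm:md}, equivalent to $(A,A^*,R_\cdot^*,L_\cdot^*,R_\circ^*,L_\circ^*)$ being a matched pair of algebras. Condition (2) is, by Proposition~\ref{pro:2.11}(\ref{it:212}), equivalent to $(A^*,R_\cdot^*,L_\cdot^*,\dh^*)$ being a representation of the \da $(A,\cdot,\partial)$. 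For condition (3) I would dualize twice: by Lemma~\ref{lem:2.3} it is equivalent to $(A^*,\circ,\dh^*,\partial^*)$ being an \adm \da \wt, and then Proposition~\ref{pro:2.11}(\ref{it:212}), applied to the \da $(A^*,\circ,\dh^*)$ equipped with the auxiliary map $\partial^*$, turns this into the statement that $\big((A^*)^*,R_\circ^*,L_\circ^*,(\partial^*)^*\big)$ is a representation of $(A^*,\circ,\dh^*)$. Since $A$ is finite-dimensional, the canonical identifications $(A^*)^*=A$ and $(\partial^*)^*=\partial$ rewrite this as: $(A,R_\circ^*,L_\circ^*,\partial)$ is a representation of the \da $(A^*,\circ,\dh^*)$.

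Next I would unfold the right-hand side: by Definition~\ref{de:3.3}, $\big((A,\partial),(A^*,\dh^*),R_\cdot^*,L_\cdot^*,R_\circ^*,L_\circ^*\big)$ is a matched pair of \das \wt exactly when $(A,\partial)$ and $(A^*,\dh^*)$ are \das \wt (both holding by hypothesis), $(A^*,R_\cdot^*,L_\cdot^*,\dh^*)$ is a representation of $(A,\partial)$, $(A,R_\circ^*,L_\circ^*,\partial)$ is a representation of $(A^*,\dh^*)$, and $(A,A^*,R_\cdot^*,L_\cdot^*,R_\circ^*,L_\circ^*)$ is a matched pair of algebras. The last three clauses are precisely the reformulations of (1), (2) and (3) found above, and since each reformulation is an ``if and only if'', the conjunction of (1), (2) and (3) is equivalent to the sextuple being a matched pair of \das \wt, which is the theorem. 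The step requiring the most care is the double dualization in (3): one must keep straight, through Lemma~\ref{lem:2.3}, that the admissible differential \emph{co}algebra $(A,\Delta,\dh,\partial)$ corresponds to the admissible differential \emph{algebra} $(A^*,\circ,\dh^*,\partial^*)$ with $\dh^*$ as the derivation and $\partial^*$ as the auxiliary map, and that the second application of Proposition~\ref{pro:2.11}(\ref{it:212})---now carried out over $A^*$ instead of $A$---yields a representation whose last component is $\partial$ rather than $\partial^*$; this is exactly where $(\partial^*)^*=\partial$ and $A\cong(A^*)^*$ enter. Everything else is routine.
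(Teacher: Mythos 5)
Your proposal is correct and is complete relative to the results already established in the paper. Note, though, that the paper itself gives no argument here: its entire proof is the citation ``It follows from [LLB23, Theorem 3.13]'', i.e.\ it defers to the weight-zero antecedent. What you have written is, in effect, the proof the paper leaves implicit, carried out for arbitrary weight $\lambda$ using only internal results. Your decomposition matches Definition~\ref{de:3.3} clause by clause: Theorem~\ref{thm:md} handles the underlying \asi bialgebra versus the matched pair of algebras; Proposition~\ref{pro:2.11}(\ref{it:212}) converts the \adm \da condition on $(A,\cdot,\partial,\dh)$ into ``$(A^*,R_\cdot^*,L_\cdot^*,\dh^*)$ is a representation of $(A,\partial)$''; and Lemma~\ref{lem:2.3} followed by a second application of Proposition~\ref{pro:2.11}(\ref{it:212}) over $A^*$ converts the \adm \dca condition into ``$(A,R_\circ^*,L_\circ^*,\partial)$ is a representation of $(A^*,\dh^*)$''. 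The remaining two clauses of Definition~\ref{de:3.3} (that $(A,\partial)$ and $(A^*,\dh^*)$ are \das \wt) are hypotheses of the theorem, so the biconditional follows from the conjunction of three biconditionals, as you say. The one genuinely delicate step is the one you flag: the double dualization in the coalgebra clause requires $(A^*)^*\cong A$ and $(\partial^*)^*=\partial$, which is legitimate under the paper's standing finite-dimensionality convention. I see no gap.
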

\begin{proof}
It follows from \cite[Theorem 3.13]{LLB23}.
\end{proof}

We recall the concept of a double construction of Frobenius algebras. See~\mcite{Bai1} for details.
\begin{defi}
 A bilinear form $\frakB$ on an algebra $A$ is called {\bf invariant} on $A$ if
\begin{equation}
 \mathfrak{B}(xy, z)=\mathfrak{B}(x,yz),\quad x, y, z\in A.
 \mlabel{eq:1.3}
 \end{equation}
A {\bf Frobenius algebra} is a pair $(A,\frakB)$ consisting of an algebra $A$ and a
nondegenerate invariant bilinear form $\frakB$. A Frobenius
algebra $(A,\frakB)$ is called {\bf symmetric} if $\frakB$ is symmetric.
 \mlabel{de:1.4}
 \end{defi}

Let $(A,\cdot)$ be an algebra. Suppose that there is an algebra
structure $\circ$ on its dual space $A^\ast$, and an algebra
structure $\star$ on the direct sum $A\oplus A^\ast$ of the underlying
vector spaces of $A$ and $A^\ast$ which contains both $(A,\cdot)$
and $(A^\ast,\circ)$ as subalgebras. Define a
bilinear form $\frakB_d$ on $A\oplus A^*$ by
\vspace{-.2cm}
\begin{equation}
\frakB_d(x + a^* ,y + b^* ) = \langle x,b^*\rangle + \langle a^* , y\rangle, \quad  a^*, b^* \in A^* , x, y \in A.
\mlabel{eq:3.9}
\end{equation}
If $\frakB_d$ is invariant on $(A\oplus A^*,\star)$, so that
$(A\oplus A^*,\star,\frakB_d)$ is a symmetric Frobenius algebra, then the Frobenius algebra is called a {\bf double construction of Frobenius algebras} associated to $(A,\cdot)$ and
$(A^\ast,\circ)$, denoted by $(A\bowtie A^*, \star,\frakB_d)$.
\begin{thm}
\cite[Theorem~2.2.1]{Bai1} Let $(A,\cdot)$ and $(A^*,\circ)$ be algebras.
Then there is a double construction of
Frobenius algebras associated to $(A,\cdot)$ and $(A^*,\circ)$ if
and only if $(A,A^*, R^*_\cdot, L^*_\cdot,R^*_\circ,L^*_\circ)$ is
a matched pair of algebras.
\mlabel{thm:frob}
\end{thm}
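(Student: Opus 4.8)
The plan is to prove the two implications separately, in each case using the explicit product~\meqref{eq:3.7} on $A\oplus A^*$ together with the bilinear form $\frakB_d$ of Eq.~\meqref{eq:3.9}, and the basic correspondence recalled after Definition~\mref{de:match}: that~\meqref{eq:3.7} defines an associative algebra exactly when the defining data form a matched pair of algebras.

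For the ``if'' direction, suppose that $(A, A^*, R^*_\cdot, L^*_\cdot, R^*_\circ, L^*_\circ)$ is a matched pair of algebras. Then~\meqref{eq:3.7} with $\ell_A = R^*_\cdot$, $r_A = L^*_\cdot$, $\ell_B = R^*_\circ$, $r_B = L^*_\circ$ turns $A\oplus A^*$ into an associative algebra $A\bowtie A^*$ having $(A,\cdot)$ and $(A^*,\circ)$ as subalgebras. Symmetry and nondegeneracy of $\frakB_d$ are clear from~\meqref{eq:3.9}, so the only point left is invariance. I would expand $\frakB_d\big((x+a^*)\star(y+b^*),\,z+c^*\big)$ and $\frakB_d\big(x+a^*,\,(y+b^*)\star(z+c^*)\big)$ using~\meqref{eq:3.7}, and rewrite each summand by means of the defining identities $\langle R^*_\cdot(x)b^*,y\rangle=\langle b^*,y\cdot x\rangle$, $\langle L^*_\cdot(x)b^*,y\rangle=\langle b^*,x\cdot y\rangle$ and their $\circ$-analogues; both sides then collapse to the same expression $\langle x\cdot y,c^*\rangle+\langle x,b^*\circ c^*\rangle+\langle y,c^*\circ a^*\rangle+\langle z,a^*\circ b^*\rangle+\langle z\cdot x,b^*\rangle+\langle a^*,y\cdot z\rangle$. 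Hence $\frakB_d$ is invariant and $(A\bowtie A^*,\star,\frakB_d)$ is a double construction of Frobenius algebras.

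For the converse, suppose $(A\oplus A^*,\star,\frakB_d)$ is a double construction of Frobenius algebras. Since $(A,\cdot)$ and $(A^*,\circ)$ are subalgebras, $x\star y=x\cdot y$ and $a^*\star b^*=a^*\circ b^*$, so everything is determined once the cross products $x\star b^*$ and $b^*\star x$ (for $x\in A$, $b^*\in A^*$) are identified. Writing $x\star b^*=u_1+v_1^*$ and $b^*\star x=u_2+v_2^*$ with $u_i\in A$, $v_i^*\in A^*$, the idea is to read off these components one at a time from the invariance of $\frakB_d$ against well-chosen test vectors, in an order that avoids circularity: pairing $\frakB_d(x\star b^*,c^*)=\frakB_d(x,b^*\circ c^*)$ against $c^*\in A^*$ gives $\langle u_1,c^*\rangle=\langle x,b^*\circ c^*\rangle$ for all $c^*$, so $u_1=L^*_\circ(b^*)x$; pairing $\frakB_d(b^*\star x,z)=\frakB_d(b^*,x\cdot z)$ against $z\in A$ gives $v_2^*=L^*_\cdot(x)b^*$; then $\frakB_d(x\star b^*,z)=\frakB_d(x,b^*\star z)$ combined with this last formula yields $v_1^*=R^*_\cdot(x)b^*$, while $\frakB_d(b^*\star x,c^*)=\frakB_d(b^*,x\star c^*)$ yields $u_2=R^*_\circ(b^*)x$. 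Thus $\star$ coincides term by term with the product~\meqref{eq:3.7} for the data $(A,A^*,R^*_\cdot,L^*_\cdot,R^*_\circ,L^*_\circ)$, and since $\star$ is associative the correspondence after Definition~\mref{de:match} forces this data to be a matched pair of algebras.

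The step I expect to be the main obstacle is purely one of bookkeeping in the converse: one must disentangle the $A$- and $A^*$-parts of the four cross products in the correct sequence (each identity feeding the next) so that no circularity arises and all signs line up with~\meqref{eq:3.7}; by contrast the six-term invariance check in the forward direction is routine once the dual-map relations are spelled out. As the statement is \cite[Theorem~2.2.1]{Bai1}, only these verifications need to be recorded.
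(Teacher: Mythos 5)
Your proposal is correct and is essentially the standard argument for this result (the paper itself gives no proof, only the citation to \cite[Theorem~2.2.1]{Bai1}, where exactly this computation is carried out): in the forward direction both sides of the invariance identity reduce to the same six-term expression, and in the converse the nondegeneracy of the natural pairing on each summand pins down the four cross-product components in the order you describe, identifying $\star$ with Eq.~\meqref{eq:3.7} so that associativity of $\star$ is equivalent to the matched-pair condition. No gaps; the bookkeeping you flag goes through as stated.
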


\begin{defi}\mlabel{de:1.3}
A {\bf differential Frobenius algebra \wt} is a triple $(A,\partial,\frakB)$
where $(A,\partial)$ is a \da \wt and $(A,\frakB)$ is a Frobenius algebra. Define a linear map $\hat\partial :A\to A$ by 
\begin{equation}\mlabel{eq:adjoint}
\mathfrak{B}(\partial (x), y)=\mathfrak{B}(x, \hat \partial (y)),\quad x,y\in A,
\end{equation}
which is called the {\bf adjoint} of $\pa$.
\end{defi}

\begin{pro}\mlabel{pp:3.9}
\begin{enumerate}
\item\mlabel{it:391}
Let $(A,\cdot,\partial,\frakB)$ be a differential symmetric Frobenius algebra \wt. Let $\hat\partial$ be the adjoint of $\partial$. Then
the quadruple $(A^*, R^*,L^*,{\hat \partial}^*)$ is a representation of the \da $(A,\cdot,\partial)$, which is equivalent to $(A,L_\cdot,R_\cdot, \pa)$. Furthermore, $(A,\cdot,\pa,\hat{\pa})$ is an \adm \da\wt.
\item\mlabel{it:392}
Let $(A,\cdot,\partial,\dh)$ be an \adm \da \wt.  If the resulting representation $(A^*, R^*,L^*,\dh^*)$ of $(A,\cdot,\partial)$ is equivalent to $(A,L,R,\partial)$, then there exists a nondegenerate bilinear form
$\frakB$ such that $(A,\partial,\frakB)$ is a differential Frobenius algebra \wt for which $\hat \partial=\dh$.
\end{enumerate}
\end{pro}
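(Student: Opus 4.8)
The plan is to prove both assertions by translating back and forth between a nondegenerate bilinear form on $A$ and the linear isomorphism $A\to A^*$ it determines, and then reading off the required identities from the description of representations of a \da in Definition~\mref{de:2.8} and the criteria for an \adm \da in Proposition~\mref{pro:2.11}.

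For the first statement, I would let $\varphi\colon A\to A^*$ be the linear map given by $\langle\varphi(x),y\rangle:=\frakB(x,y)$; it is bijective since $\frakB$ is nondegenerate, and the defining relation $\frakB(\pa(x),y)=\frakB(x,\hat\pa(y))$ of the adjoint (Eq.~\meqref{eq:adjoint}) reads, after pairing against an arbitrary element, $\varphi(\pa(x))=\hat\pa^{*}(\varphi(x))$. The key point is that $\varphi$ intertwines the adjoint representation $(A,L_\cdot,R_\cdot,\pa)$ of $(A,\cdot,\pa)$ (Proposition~\mref{pro:2.11}) with $(A^{*},R^{*}_\cdot,L^{*}_\cdot,\hat\pa^{*})$: one checks $\varphi(L_\cdot(x)y)=R^{*}_\cdot(x)\varphi(y)$ and $\varphi(R_\cdot(x)y)=L^{*}_\cdot(x)\varphi(y)$. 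Pairing the first against $z$ turns it into $\frakB(x\cdot y,z)=\frakB(y,z\cdot x)$, which follows by symmetry, then invariance (Eq.~\meqref{eq:1.3}), then symmetry again; pairing the second against $z$ turns it into $\frakB(y\cdot x,z)=\frakB(y,x\cdot z)$, which is invariance itself. Hence the isomorphic image of the adjoint representation under $\varphi$ is precisely $(A^{*},R^{*}_\cdot,L^{*}_\cdot,\hat\pa^{*})$, so the latter is a representation of $(A,\cdot,\pa)$ equivalent to $(A,L_\cdot,R_\cdot,\pa)$; and then Proposition~\mref{pro:2.11}, applied with $\dh=\hat\pa$, gives that $(A,\cdot,\pa,\hat\pa)$ is an \adm \da \wt.

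For the second statement, I would run this in reverse. Let $\varphi\colon A\to A^{*}$ be a linear isomorphism realizing the equivalence of $(A,L_\cdot,R_\cdot,\pa)$ with $(A^{*},R^{*}_\cdot,L^{*}_\cdot,\dh^{*})$, in the sense of Definition~\mref{de:2.8}, and define $\frakB(x,y):=\langle\varphi(x),y\rangle$. Then $\frakB$ is nondegenerate because $\varphi$ is bijective; $(A,\pa)$ is a \da \wt by hypothesis; dualizing the intertwining relation $\varphi(R_\cdot(y)x)=L^{*}_\cdot(y)\varphi(x)$ and pairing with $z$ yields $\frakB(x\cdot y,z)=\langle\varphi(x),y\cdot z\rangle=\frakB(x,y\cdot z)$, i.e. invariance; and pairing $\varphi(\pa(x))=\dh^{*}(\varphi(x))$ with $y$ gives $\frakB(\pa(x),y)=\frakB(x,\dh(y))$, so that $(A,\pa,\frakB)$ is a differential Frobenius algebra \wt whose adjoint $\hat\pa$, determined by Eq.~\meqref{eq:adjoint} together with the nondegeneracy of $\frakB$, coincides with $\dh$. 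Note that symmetry of $\frakB$ is not needed here, since in the present terminology a Frobenius algebra is not required to be symmetric; so the equivalence alone suffices, with no extra hypothesis on $\varphi$.

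Neither step hides a genuine difficulty; the only thing demanding care is the bookkeeping of left versus right multiplication operators and of the dual-representation conventions fixed in the Notations. In particular, the first statement needs \emph{both} the symmetry and the invariance of $\frakB$ in order to exchange $L_\cdot$ and $R_\cdot$ correctly when identifying $\varphi$ as a representation morphism, whereas the second statement uses invariance alone; matching the intertwining identities against the exact form of Definition~\mref{de:2.8} is where a sign or side error is easiest to make.
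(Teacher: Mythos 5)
Your argument is correct and rests on the same device as the paper's proof: the isomorphism $\varphi:A\to A^*$ given by $\langle\varphi(x),y\rangle=\frakB(x,y)$ together with the three intertwining identities $\varphi(L_\cdot(x)y)=R^*_\cdot(x)\varphi(y)$, $\varphi(R_\cdot(x)y)=L^*_\cdot(x)\varphi(y)$ and $\varphi(\pa(x))={\hat\pa}^*(\varphi(x))$; your part (2) is essentially the paper's argument verbatim, including the (correct) observation that symmetry of $\frakB$ is not needed there. In part (1) you reverse the paper's order of deduction: the paper first verifies the admissibility identities \meqref{eq:pduqr} and \meqref{eq:pduql} by a direct manipulation of $\frakB$ using Eq.~\meqref{eq:adjoint} and invariance, then invokes Proposition~\mref{pro:2.11}(\mref{it:212}) to conclude that $(A^*,R^*,L^*,{\hat\pa}^*)$ is a representation, and only afterwards checks the equivalence. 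You instead establish the equivalence first, obtain the representation property by transporting the adjoint representation $(A,L_\cdot,R_\cdot,\pa)$ along $\varphi$ (legitimate, since the axioms of Definition~\mref{de:2.8} are operator identities preserved under conjugation by a linear isomorphism), and then recover admissibility from the converse direction of the equivalence in Proposition~\mref{pro:2.11}(\mref{it:212}). This makes the explicit verification of Eqs.~\meqref{eq:pduqr}--\meqref{eq:pduql} unnecessary, at the cost of using both directions of Proposition~\mref{pro:2.11}(\mref{it:212}) rather than one; both routes are sound, and your remark that symmetry of $\frakB$ is needed precisely to convert $\frakB(x\cdot y,z)$ into $\frakB(y,z\cdot x)$ in the first intertwining identity matches the computation in the paper.
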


\begin{proof}(\mref{it:391})
Suppose that $(A,\cdot,\partial,\frakB)$ is a differential symmetric Frobenius algebra \wt. Then for all $x,y,z\in A$, we have
\begin{eqnarray*}
0&=& \frakB(\partial (xy),z)-\frakB(\partial (x)y,z)-\frakB(x\partial (y)),z)-\frakB(\lambda \partial (x)\partial (y),z)\\
&=& \frakB(xy,\hat\partial (z))- \frakB(\partial (x),yz) - \frakB(x,\partial (y)z) - \frakB(\lambda \partial (x),\partial (y)z)\\
&=& \frakB(x,y\hat\partial (z)- \hat \partial (xz) - \partial (y)z - \lambda \hat\partial (\partial (y)z),
\end{eqnarray*}
yielding
$ y\hat\partial (z)= \hat \partial (yz) + \partial (y)z + \lambda \hat\partial (\partial (y)z=0.$
This gives Eq.~(\mref{eq:pduql}). Applying the symmetry of
$\frakB$, a similar argument gives Eq.~(\mref{eq:pduqr}).
Then by Proposition~\mref{pro:2.11}(\mref{it:212}),
$(A^*, R^*,L^*,{\hat \partial}^*)$ is a representation of $(A,\partial)$.

Define a linear map $\phi:A\to  A^*$ by
$$\phi(x)(y):=\langle \phi(x), y\rangle=\frakB(x,y),\quad x,y\in A.$$
The nondegeneracy of $\frakB$ gives the bijectivity of $\phi$.
For all $x,y,z\in A$, we have
\begin{eqnarray*}
\phi(\pa(x))y=\langle\phi( \pa(x)),y\rangle=\frakB(\pa(x),y)=\frakB(x,\hat{\pa} (y))=\langle\phi(x),\hat{\pa}(y)\rangle=\hat{\pa}^*(\phi(x))y.
\end{eqnarray*}
\begin{eqnarray*}
\phi (L(x)y)z=\frakB(xy,z)=\frakB(zx,y)=\langle \phi(y), zx\rangle=\langle R^*(x)\phi(y), z\rangle=R^*(x)\phi(y) z.
\end{eqnarray*}
Similarly, $\phi(R(y)x)z=L^*(y)\phi(x)z$.
So by Eq.~\meqref{eq:282}, $(A, L, R, \partial)$ is equivalent
to $(A^*, R^*,L^*,{\hat \partial}^*)$ as representations of $(A,\cdot,\partial)$.
\smallskip

\noindent
(\mref{it:392})
Assume that $\phi:A\rightarrow A^*$ is an
isomorphism such that  $(A, L, R, \partial)$ and
$(A^*, R^*,L^*,\dh^*)$ are equivalent. Define a bilinear form $\frakB$ on
$A$ by
$$\frakB(x,y):=\langle \phi(x), y\rangle,\quad x,y\in A.$$
Then by the above proof, $\frakB$ is invariant on $(A,\cdot)$, and so  $(A,\cdot,\partial,\frakB)$ is a differential Frobenius algebra \wt. By $\phi(\pa(x))y=\dh^*(\phi(x))y$, we get $\langle \phi(\pa(x)),y\rangle=\langle \dh^*(\phi(x)),y\rangle$. Thus, $\frakB(x,\hat{\pa}(y))=\frakB(\pa(x),y)=\frakB(x,\dh(y)),$
and so $\hat \partial=\dh$.
\end{proof}

\begin{defi}   \mlabel{de:3.10}
 Let $(A, \cdot, \partial)$ be a \da \wt. Suppose that $(A^*,\circ, \dh^*)$ is a \da \wt.
 A {\bf double construction of differential Frobenius algebras} \wt associated to $(A, \cdot, \partial)$ and $(A^*,\circ, \dh^*)$, denoted by $(A\bowtie A^*,\star,\partial+ \dh^*, \frakB_d)$, is a double construction $(A\bowtie A^*, \star, \frakB_d)$ of Frobenius algebra associated to $(A, \cdot)$ and $(A^*,\circ)$ such that $(A\bowtie A^*,\star,\partial+ \dh^*, \frakB_d)$ is a  differential Frobenius algebra \wt.
 \end{defi}
 \begin{lem}
\mlabel{lem:3.11}
Let $(A\bowtie A^*,\star,\partial+\dh^*, \mathfrak{B}_d)$ be a double construction of differential Frobenius algebra \wt associated to $(A,\partial)$ and $(A^*,\dh^*)$. The adjoint $\widehat{ \partial+\dh^*}$ of $\partial+\dh^*$ with respect to $\frakB_d$ is $\dh+ \partial^*$.
\end{lem}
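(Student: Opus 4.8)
The plan is to unwind the definition of the adjoint (Eq.~\meqref{eq:adjoint}, applied to the map $\partial+\dh^*$ and the form $\frakB_d$) and verify the claimed formula by a direct computation with the bilinear form. Since $\frakB_d$ is nondegenerate on the finite-dimensional space $A\oplus A^*$, the linear map $\partial+\dh^*$ admits a unique adjoint; thus it suffices to check that $\dh+\partial^*$ satisfies
\[
\frakB_d\big((\partial+\dh^*)(u),\,v\big)=\frakB_d\big(u,\,(\dh+\partial^*)(v)\big),\qquad u,v\in A\oplus A^*,
\]
and then invoke uniqueness to conclude $\widehat{\partial+\dh^*}=\dh+\partial^*$.

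First I would write $u=x+a^*$ and $v=y+b^*$ with $x,y\in A$ and $a^*,b^*\in A^*$. Since $\partial+\dh^*$ is by definition block diagonal, acting as $\partial$ on $A$ and as $\dh^*$ on $A^*$, we have $(\partial+\dh^*)(x+a^*)=\partial(x)+\dh^*(a^*)$, so Eq.~\meqref{eq:3.9} gives
\[
\frakB_d\big((\partial+\dh^*)(x+a^*),\,y+b^*\big)=\langle \partial(x),b^*\rangle+\langle \dh^*(a^*),y\rangle.
\]
Next I would rewrite each summand using the defining property of the linear dual (Eq.~\meqref{eq:phis}): $\langle \partial(x),b^*\rangle=\langle x,\partial^*(b^*)\rangle$ and $\langle \dh^*(a^*),y\rangle=\langle a^*,\dh(y)\rangle$. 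Hence the expression equals $\langle x,\partial^*(b^*)\rangle+\langle a^*,\dh(y)\rangle$, which by Eq.~\meqref{eq:3.9} is exactly $\frakB_d\big(x+a^*,\,\dh(y)+\partial^*(b^*)\big)=\frakB_d\big(x+a^*,\,(\dh+\partial^*)(y+b^*)\big)$. This is the required identity.

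There is no real obstacle here: the computation is immediate, and in fact only the nondegeneracy of $\frakB_d$ is used — the associative, Frobenius and differential structure of the double construction play no role in this particular statement. The only points to keep straight are the pairing conventions between $A$ and $A^*$ and the distinction between the two linear duals $\partial^*$ and $\dh^*$ appearing on either side of the identity.
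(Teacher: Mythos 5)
Your proposal is correct and is essentially identical to the paper's own proof: both unwind $\frakB_d\big((\partial+\dh^*)(x+a^*),y+b^*\big)=\langle\partial(x),b^*\rangle+\langle\dh^*(a^*),y\rangle$, move each operator across the pairing via Eq.~\meqref{eq:phis}, and reassemble to get $\frakB_d\big(x+a^*,(\dh+\partial^*)(y+b^*)\big)$. Your added remarks on uniqueness via nondegeneracy and on which hypotheses are actually used are accurate but not needed.
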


\begin{proof} For all $x,y\in A, a^*,b^*\in
A^*$, by Eq.~(\mref{eq:3.9}), we have 
\begin{align*}
\frakB_d\big((\partial +\dh ^*)(x+a^*),y+b^*\big)&=\frakB\big(\partial (x)+\dh ^*(a^*),y+b^*\big)
=\langle \partial (x),b^*\rangle + \langle \dh ^*(a^*), y\rangle\\
&=\langle x, \partial ^*(b^*)\rangle + \langle a^*, \dh (y)\rangle
=\frakB_d(x+a^*, (\dh +\partial ^*)(y+b^*)).
\end{align*}
So we get $\widehat{ \partial+\dh^*}=\dh+ \partial^*$.
\end{proof}

The subsequent theorem demonstrates that a double structure of differential Frobenius algebras \wt is fully characterized by  some matched pair of differential algebras \wt.
\begin{thm} Let $(A,\cdot,\partial)$ be a \da \wt. Suppose that $(A^*,\circ,\dh^*)$ is a \da \wt. Then $(A\bowtie A^*,\star,\partial+\dh^*,\frakB_d)$ is a double construction of differential Frobenius
algebras \wt  if and only if
$((A,\partial),(A^*,\dh^*), R^*_\cdot, L^*_\cdot,R^*_\circ,L^*_\circ)$ is a
matched pair of \das \wt. \mlabel{thm:3.12}
\end{thm}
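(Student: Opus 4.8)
The plan is to obtain the equivalence by bootstrapping from two structural results already in hand: Bai's identification of a double construction of Frobenius algebras with a matched pair of associative algebras (Theorem~\mref{thm:frob}), and Theorem~\mref{thm:3.4}, which says that gluing two \das \wt along a matched pair of the underlying algebras yields a \da \wt exactly when the data assembles into a matched pair of \das \wt. The one bookkeeping point that makes everything fit is that, by Definition~\mref{de:3.10} together with Definition~\mref{de:1.3}, a double construction of differential Frobenius algebras \wt associated to $(A,\cdot,\partial)$ and $(A^*,\circ,\dh^*)$ is precisely a double construction of Frobenius algebras $(A\bowtie A^*,\star,\frakB_d)$ for $(A,\cdot)$ and $(A^*,\circ)$ for which, in addition, $(A\bowtie A^*,\star,\partial+\dh^*)$ is a \da \wt; no further compatibility between $\frakB_d$ and $\partial+\dh^*$ is imposed.

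For the forward direction I would assume $(A\bowtie A^*,\star,\partial+\dh^*,\frakB_d)$ is a double construction of differential Frobenius algebras \wt. Dropping the differential, $(A\bowtie A^*,\star,\frakB_d)$ is a double construction of Frobenius algebras, so Theorem~\mref{thm:frob} gives that $(A,A^*,R^*_\cdot,L^*_\cdot,R^*_\circ,L^*_\circ)$ is a matched pair of algebras (with $\star$ the product of Eq.~\meqref{eq:3.7}). At the same time $(A\bowtie A^*,\star,\partial+\dh^*)$ is a \da \wt. I then apply Theorem~\mref{thm:3.4} with $B=A^*$, $\partial_A=\partial$, $\partial_B=\dh^*$, $\ell_A=R^*_\cdot$, $r_A=L^*_\cdot$, $\ell_B=R^*_\circ$, $r_B=L^*_\circ$, and $\partial_{A\oplus A^*}=\partial+\dh^*$; it promotes ``matched pair of algebras'' to ``matched pair of \das \wt'', which is exactly the assertion that $((A,\partial),(A^*,\dh^*),R^*_\cdot,L^*_\cdot,R^*_\circ,L^*_\circ)$ is a matched pair of \das \wt in the sense of Definition~\mref{de:3.3}.

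For the converse I would start from a matched pair of \das \wt $((A,\partial),(A^*,\dh^*),R^*_\cdot,L^*_\cdot,R^*_\circ,L^*_\circ)$. By Definition~\mref{de:3.3} this contains, in particular, a matched pair of the algebras $(A,\cdot)$ and $(A^*,\circ)$ with the same structure maps, so Theorem~\mref{thm:frob} produces the double construction of Frobenius algebras $(A\bowtie A^*,\star,\frakB_d)$, while Theorem~\mref{thm:3.4} (same substitution as above) shows that $(A\bowtie A^*,\star,\partial+\dh^*)$ is a \da \wt. Hence $(A\bowtie A^*,\star,\partial+\dh^*,\frakB_d)$ is simultaneously a double construction of Frobenius algebras and a differential Frobenius algebra \wt, i.e. a double construction of differential Frobenius algebras \wt by Definition~\mref{de:3.10}.

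I do not expect a genuinely hard step here: all the real content has been localized in Theorems~\mref{thm:frob} and~\mref{thm:3.4}, and this argument runs parallel to the corresponding statement in~\cite{LLB23}. The only points that need a careful (but routine) check are notational: that the operator written $\partial_{A\oplus A^*}$ in Theorem~\mref{thm:3.4} is literally $\partial+\dh^*$; that the quadruple $R^*_\cdot,L^*_\cdot,R^*_\circ,L^*_\circ$ is the same in both theorems; and that Definition~\mref{de:3.10} really does not slip in a hidden constraint on the adjoint $\widehat{\partial+\dh^*}$ beyond $(A\bowtie A^*,\star,\partial+\dh^*)$ being a \da \wt --- a point one can confirm against Lemma~\mref{lem:3.11}, which simply evaluates that adjoint as $\dh+\partial^*$ without requiring it to satisfy any relation.
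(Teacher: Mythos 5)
Your argument is correct and is essentially the proof the paper intends: the paper itself only defers to \cite[Theorem 3.6]{LLB23}, and your assembly of Theorem~\mref{thm:frob}, Theorem~\mref{thm:3.4}, and the definitional unpacking of Definitions~\mref{de:1.3} and~\mref{de:3.10} is exactly that argument carried out for general weight $\lambda$. The one point genuinely worth making explicit --- that a differential Frobenius algebra \wt imposes no compatibility between $\frakB$ and the derivation beyond the mere existence of the adjoint, so Definition~\mref{de:3.10} adds only the condition that $(A\bowtie A^*,\star,\partial+\dh^*)$ be a \da \wt --- is one you do make.
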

\begin{proof}The proof follows that  of~\cite[Theorem 3.6]{LLB23}.
\end{proof}

Combining Theorems~\mref{thm:3.5} and~\mref{thm:3.12}, we have
\begin{cor}
Let $(A, \cdot,\partial)$ be a \da \wt. Suppose that  $(A^*, \circ,\dh^*)$ is
a \da \wt   on $A^*$. Let $\Delta:A\rightarrow A\otimes A$ be the linear dual of
the multiplication $\circ$. Then the following statements are equivalent.
 \begin{enumerate}
 \item The sextuple $((A,\partial),(A^*,\dh^*), {R_\cdot}^*, {L_\cdot}^*, {R_\circ}^*, {L_\circ}^*)$ is a matched pair of \das \wt.
\mlabel{it:rbb1}
\item The quadruple $\big(A\bowtie A^*, \star, \pa+ \dh^*, \frakB_d\big)$ is a double construction of differential Frobenius algebras \wt associated to $(A,\cdot,\partial)$ and
$(A^*,\circ,\dh^*)$. \mlabel{it:rbb2}
\item The quintuple
$(A,\cdot,\Delta,\partial,\dh)$ is a \dASIb \wt.
\mlabel{it:rbb3}
\end{enumerate}
\mlabel{cor:3.13}
\end{cor}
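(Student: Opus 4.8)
The plan is to obtain this corollary by splicing together the two preceding characterization theorems; all the substantive work has already been done there, so what remains is essentially bookkeeping. Write $\mathfrak{M}$ for the sextuple $((A,\partial),(A^*,\dh^*), {R_\cdot}^*, {L_\cdot}^*, {R_\circ}^*, {L_\circ}^*)$ appearing in statement~(a). The strategy is to establish $(a)\Leftrightarrow(c)$ and $(a)\Leftrightarrow(b)$ separately, each by a direct appeal to an earlier result, and then to chain the two equivalences through the common middle term $\mathfrak{M}$.

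First I would invoke Theorem~\mref{thm:3.5}, which under the standing hypotheses — $(A,\cdot,\partial)$ and $(A^*,\circ,\dh^*)$ are \das \wt and $\Delta$ is the linear dual of $\circ$ — asserts that the quintuple $(A,\cdot,\Delta,\partial,\dh)$ is a \dASIb \wt if and only if $\mathfrak{M}$ is a matched pair of \das \wt; this is precisely $(c)\Leftrightarrow(a)$. Then I would invoke Theorem~\mref{thm:3.12}, which asserts that $(A\bowtie A^*,\star,\partial+\dh^*,\frakB_d)$ — with $\star$ the algebra structure on $A\oplus A^*$ produced from $\mathfrak{M}$ via Eq.~\meqref{eq:3.7}, with $\frakB_d$ the canonical bilinear form of Eq.~\meqref{eq:3.9}, and with differential $\partial+\dh^*$ as in Definition~\mref{de:3.10} (whose $\frakB_d$-adjoint is $\dh+\partial^*$ by Lemma~\mref{lem:3.11}, so that the differential Frobenius condition is internally consistent) — is a double construction of differential Frobenius algebras \wt if and only if $\mathfrak{M}$ is a matched pair of \das \wt; this is $(b)\Leftrightarrow(a)$. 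Combining the two biconditionals gives $(b)\Leftrightarrow(a)\Leftrightarrow(c)$, which is the assertion of the corollary.

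I do not expect any genuine obstacle here. The only point that needs (minor) attention is to confirm that the sextuple $\mathfrak{M}$ occurring in the hypothesis of Theorem~\mref{thm:3.5} is literally the same object as the one occurring in the hypothesis of Theorem~\mref{thm:3.12} — in both cases the representation on the $A^*$-side is $({R_\cdot}^*, {L_\cdot}^*)$, the representation on the $A$-side is $({R_\circ}^*, {L_\circ}^*)$, and the differential operators are $\partial$ on $A$ and $\dh^*$ on $A^*$ — so that the two equivalences may legitimately be glued along it. No new computation is required.
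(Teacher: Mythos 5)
Your proposal is correct and is exactly the paper's argument: the corollary is obtained by combining Theorem~\mref{thm:3.5} (giving the equivalence of the matched-pair condition with the \dASIb condition) and Theorem~\mref{thm:3.12} (giving its equivalence with the double construction), glued along the common sextuple. No further comment is needed.
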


\begin{cor}\mlabel{cor:3.14}
	Let $\big((A\bowtie A^*, \star, \frakB_d),(A,\cdot ),(A^{*},\circ )\big)$ be a double construction of Frobenius algebras \wt and $\pa,\dh:A\rightarrow A$ be linear maps.
\begin{enumerate}
\item\mlabel{it:3121}
Then  $\big(A\bowtie A^*, \star, \pa+ \dh^*, \frakB_d\big)$ is a double construction of differential Frobenius algebras \wt if and only if both $(A,\cdot,\pa,\dh)$ and $(A^{*},\circ,\dh^{*},\pa^{*})$ are admissible differential algebras \wt.
\item\mlabel{it:3122}
Under the condition of Item~\meqref{it:3121}, $(A\bowtie A^*,\star, \pa+ \dh^*, \dh+\pa^{*})$ is  an admissible differential algebra \wt containing $(A,\cdot,\pa,\dh)$ and $(A^{*},\circ,\dh^{*},\pa^{*})$ as differential subalgebras.
\end{enumerate}
\end{cor}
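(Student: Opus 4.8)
The plan is to reduce Item~(a) to Theorem~\ref{thm:3.12} and Proposition~\ref{pro:2.11}(b), and Item~(b) to Lemma~\ref{lem:3.11} together with Proposition~\ref{pp:3.9}(a); essentially no new computation should be required.

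For Item~(a), I would first record the standing fact that, because $((A\bowtie A^*,\star,\frakB_d),(A,\cdot),(A^*,\circ))$ is a double construction of Frobenius algebras, Theorem~\ref{thm:frob} shows $(A,A^*,R^*_\cdot,L^*_\cdot,R^*_\circ,L^*_\circ)$ is a matched pair of algebras. This is exactly the ``matched pair of algebras'' clause of Definition~\ref{de:3.3}, so the sextuple $((A,\partial),(A^*,\dh^*),R^*_\cdot,L^*_\cdot,R^*_\circ,L^*_\circ)$ is a matched pair of differential algebras of weight $\lambda$ precisely when $(A,\cdot,\partial)$ and $(A^*,\circ,\dh^*)$ are differential algebras of weight $\lambda$, $(A^*,R^*_\cdot,L^*_\cdot,\dh^*)$ is a representation of $(A,\cdot,\partial)$, and $(A,R^*_\circ,L^*_\circ,\partial)$ is a representation of $(A^*,\circ,\dh^*)$. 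By Proposition~\ref{pro:2.11}(b) the first and third of these together say that $(A,\cdot,\partial,\dh)$ is an admissible differential algebra of weight $\lambda$; applying the same proposition to the differential algebra $(A^*,\circ,\dh^*)$ with the linear map $\partial^*$, and using that $A$ is finite-dimensional so that $(A^*)^*=A$ and $\partial^{**}=\partial$, the first and fourth together say that $(A^*,\circ,\dh^*,\partial^*)$ is an admissible differential algebra of weight $\lambda$. Finally Theorem~\ref{thm:3.12} equates ``$((A,\partial),(A^*,\dh^*),R^*_\cdot,L^*_\cdot,R^*_\circ,L^*_\circ)$ is a matched pair of differential algebras of weight $\lambda$'' with ``$(A\bowtie A^*,\star,\partial+\dh^*,\frakB_d)$ is a double construction of differential Frobenius algebras of weight $\lambda$''; to invoke it in the direction starting from the latter, I would first note that if $\partial+\dh^*$ is a derivation of weight $\lambda$ on $A\bowtie A^*$ then restricting its Leibniz identity to the subalgebras $A$ and $A^*$, on which $\partial+\dh^*$ acts as $\partial$ and $\dh^*$, shows that $(A,\cdot,\partial)$ and $(A^*,\circ,\dh^*)$ are already differential algebras of weight $\lambda$, which is the only hypothesis of Theorem~\ref{thm:3.12} not immediately available. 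Concatenating these equivalences gives Item~(a).

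For Item~(b), I would assume the condition of Item~(a), so that $(A\bowtie A^*,\star,\partial+\dh^*,\frakB_d)$ is a differential Frobenius algebra of weight $\lambda$; since $\frakB_d$ from Eq.~\eqref{eq:3.9} is visibly symmetric, it is in fact a differential symmetric Frobenius algebra of weight $\lambda$. Lemma~\ref{lem:3.11} then identifies the adjoint of $\partial+\dh^*$ with respect to $\frakB_d$ as $\dh+\partial^*$, and Proposition~\ref{pp:3.9}(a) applied to this differential symmetric Frobenius algebra yields that $(A\bowtie A^*,\star,\partial+\dh^*,\dh+\partial^*)$ is an admissible differential algebra of weight $\lambda$. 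For the subalgebra assertion, $A$ and $A^*$ are subalgebras of $A\bowtie A^*$ by the double construction, and $(\partial+\dh^*)(A)=\partial(A)\subseteq A$, $(\dh+\partial^*)(A)=\dh(A)\subseteq A$, with the analogous statements for $A^*$; hence the restriction of the pair $(\partial+\dh^*,\dh+\partial^*)$ to $A$ is $(\partial,\dh)$ and to $A^*$ is $(\dh^*,\partial^*)$, and by Item~(a) these restrictions make $A$ and $A^*$ into admissible differential algebras of weight $\lambda$, i.e.\ differential subalgebras.

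The argument is mostly bookkeeping; the main point needing care is the two-sided use of Proposition~\ref{pro:2.11}(b)---once for $(A,\cdot)$ and once for $(A^*,\circ)$, with the dualization identities $(A^*)^*=A$, $\partial^{**}=\partial$---and the observation that the ``matched pair of algebras'' hypothesis concealed in the notion of a matched pair of differential algebras is supplied for free by the ambient double construction of Frobenius algebras via Theorem~\ref{thm:frob}, so that no compatibility identities need to be verified by hand.
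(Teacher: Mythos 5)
Your proof is correct and follows essentially the same strategy as the paper's: both items are pure bookkeeping reductions to the previously established equivalences, and your Item~(b) (Lemma~\ref{lem:3.11} plus Proposition~\ref{pp:3.9}) is exactly the paper's argument. The only cosmetic difference is in Item~(a), where you route through Theorem~\ref{thm:3.12} (the matched-pair characterization, unpacked via Theorem~\ref{thm:frob} and Proposition~\ref{pro:2.11}(\ref{it:212})) while the paper routes through Corollary~\ref{cor:3.13} (the differential \ASIb characterization, unpacked via Definition~\ref{de:dasib} and Lemma~\ref{lem:2.3}); these are interchangeable since Corollary~\ref{cor:3.13} asserts the equivalence of both characterizations.
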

\begin{proof}(\mref{it:3121}) follows from Corollary~\mref{cor:3.13}, Lemma~\mref{lem:2.3} and Definition~\mref{de:dasib}.

\smallskip
\noindent
(\mref{it:3122}) follows from Lemma~\mref{lem:3.11} and Proposition~\mref{pp:3.9}.
\end{proof}

\subsection{Noncommutative Novikov bialgebras from differential \asi bialgebras of weight $0$}

\begin{pro}\mlabel{pro:kjl}
\cite{Lo1,SK23} Let $(A,\cdot,\partial)$ be a differential algebra of weight $0$. Then the binary operations
\begin{equation}
x \prec_A y:=x\cdot \partial(y), \quad x\succ_A y:= \partial(x)\cdot y,\quad x,y\in A,\mlabel{eq:11}
\end{equation}
makes $(A,\prec_A, \succ_A)$ into a \nva.
\end{pro}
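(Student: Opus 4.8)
The strategy is purely computational: we must verify the two Novikov identities \meqref{eq:nv1} and \meqref{eq:nv2} for the operations $\prec_A$ and $\succ_A$ defined by Eq.~\meqref{eq:11}, using only that $\partial$ is a derivation of weight $0$, i.e. $\partial(x\cdot y)=\partial(x)\cdot y+x\cdot\partial(y)$, together with associativity of $\cdot$. First I would record the auxiliary operation $\ob=\prec_A+\succ_A$ explicitly: $x\ob y=x\cdot\partial(y)+\partial(x)\cdot y=\partial(x\cdot y)$, so that $\ob$ is just the pullback of $\cdot$ along $\partial$; this identity will be the workhorse for \meqref{eq:nv2}.

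For Eq.~\meqref{eq:nv1} I would expand both sides. The left-hand side $(x\succ_A y)\prec_A z=(\partial(x)\cdot y)\cdot\partial(z)$, while the right-hand side $x\succ_A(y\prec_A z)=\partial(x)\cdot(y\cdot\partial(z))$; these agree immediately by associativity of $\cdot$, so \meqref{eq:nv1} costs essentially nothing.

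For Eq.~\meqref{eq:nv2}, the identity to establish is $(x\prec_A y)\ob z=x\ob(y\succ_A z)$. Using the observation above, the left-hand side is $\partial\big((x\cdot\partial(y))\cdot z\big)$ and the right-hand side is $\partial\big(x\cdot(\partial(y)\cdot z)\big)$; these are equal again by associativity of $\cdot$ inside the argument of $\partial$. So in fact both Novikov identities reduce to associativity once the weight-$0$ Leibniz rule is used to rewrite $\ob$ as $\partial\circ\cdot$; the derivation property is only needed to identify $\ob$, and the commutativity of $A$ is not needed at all, which is precisely why the construction survives in the noncommutative setting.

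The only mild subtlety — and the step I would be most careful about — is bookkeeping of which operation ($\prec_A$, $\succ_A$, or $\ob$) sits at each position when expanding the degree-three expressions, since the placement of $\partial$ differs between $\prec_A$ and $\succ_A$; once the expansions are written out correctly, no cancellation or clever manipulation is required, only a single application of associativity in each of the two identities. I would present the verification as two short displayed chains of equalities, one for \meqref{eq:nv1} and one for \meqref{eq:nv2}, citing associativity of $(A,\cdot)$ and the weight-$0$ case of \meqref{eq:rb}.
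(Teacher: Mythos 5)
Your verification is correct and complete: the identity $x\ob y=\partial(x\cdot y)$ (valid precisely because the weight is $0$) reduces both Novikov axioms to associativity of $\cdot$, exactly as in the standard argument. The paper itself gives no proof here, simply citing \cite{Lo1,SK23}, and your computation is the expected one from those references.
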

\begin{pro}\mlabel{pro:5.9}
Let	$(A,\Delta,\dh)$ be a differential coalgebra of weight $0$. Let $\circ:A^*\ot A^*\to A^*$ be the linear dual of $\Delta$.
\begin{enumerate}
\item\mlabel{it:591}
Then $(A^*,\prec_{A^*},\su_{A^*})$ is a \nva, where $\prec_{A^*},\su_{A^*}$ are given by 
\begin{equation}
a^* \prec_{A^*} b^*:= a^*\circ \dh^* (b^*),\quad a^*\succ_{A^*} b^*:= \dh^*(a^*)\circ b^*,\quad a^*,b^*\in A^*.\mlabel{eq:12}
\end{equation}

\item \mlabel{it:592}Let $\Delta_{\prec_{A}}$ and $\Delta_{\su_{A}}$ be the linear duals of $\prec_{A^{*}}$ and $\su_{A^{*}}$, respectively. Then $(A,\Delta_{\prec_{A}},\Delta_{\su_{A}})$ is  a \nvca.

\end{enumerate}
\end{pro}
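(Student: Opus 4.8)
The plan is to reduce both statements to results already established on the dual, associative side, so that almost nothing new needs to be computed.

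For item~\eqref{it:591}, I would first dualize: since $(A,\Delta,\dh)$ is a differential coalgebra of weight $0$, the discussion preceding Lemma~\ref{lem:2.3} shows that the linear dual $(A^*,\circ,\dh^*)$ --- with $\circ$ the linear dual of $\Delta$ --- is a differential algebra of weight $0$. Then I would apply Proposition~\ref{pro:kjl} to $(A^*,\circ,\dh^*)$, taking the derivation there to be $\dh^*$ and the associative product to be $\circ$. The Novikov operations produced by that proposition are $a^*\circ\dh^*(b^*)$ and $\dh^*(a^*)\circ b^*$, which are exactly $\prec_{A^*}$ and $\su_{A^*}$ of Eq.~\eqref{eq:12}. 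Hence $(A^*,\prec_{A^*},\su_{A^*})$ is a \nva.

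For item~\eqref{it:592}, I would invoke Lemma~\ref{lem:nva-equi}. Because all spaces are finite dimensional, the canonical isomorphism $A\cong A^{**}$ identifies $\prec_{A^*}$ and $\su_{A^*}$ with the linear duals of $\Delta_{\prec_A}$ and $\Delta_{\su_A}$, respectively --- passing from a bilinear operation to the linear dual of its linear dual is the identity under this identification. So Lemma~\ref{lem:nva-equi}, applied to the maps $\Delta_{\prec_A},\Delta_{\su_A}$ on $A$ whose linear duals are $\prec_{A^*},\su_{A^*}$, tells us that $(A,\Delta_{\prec_A},\Delta_{\su_A})$ is a \nvca if and only if $(A^*,\prec_{A^*},\su_{A^*})$ is a \nva, and the latter is precisely item~\eqref{it:591}.

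I do not anticipate a genuine obstacle here: the argument is essentially a short chain of citations. The only points needing care are (i) recording that the relevant ``derivation'' when applying Proposition~\ref{pro:kjl} is $\dh^*$ rather than $\pa$, so that Eq.~\eqref{eq:12} matches Eq.~\eqref{eq:11} verbatim after the substitution $\cdot\mapsto\circ$, $\pa\mapsto\dh^*$; and (ii) the use of finite dimensionality to move freely between a bilinear operation and the linear dual of its linear dual, which is what makes Lemma~\ref{lem:nva-equi} directly applicable in item~\eqref{it:592}.
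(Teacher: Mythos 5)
Your proposal is correct and follows exactly the paper's own argument: dualize $(A,\Delta,\dh)$ to the weight-$0$ differential algebra $(A^*,\circ,\dh^*)$, apply Proposition~\ref{pro:kjl} to obtain item (a), and deduce item (b) from item (a) via Lemma~\ref{lem:nva-equi}. The extra care you note about finite dimensionality and the double-dual identification is implicit in the paper's proof but entirely consistent with it.
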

\begin{proof}
(\mref{it:591}) If $(A,\Delta,\dh)$ is a differential coalgebra of weight $0$, then $(A^*,\circ,\dh^*)$ is a differential algebra of weight $0$. By Proposition~\mref{pro:kjl}, $(A^*,\prec_{A^*},\su_{A^*})$ is a \nva.
\smallskip

\noindent
(\mref{it:592}) This follows from Item~\meqref{it:591} and Lemma~\mref{lem:nva-equi}.
\end{proof}

\begin{pro}\mlabel{pro:ll}
	Let $(A\bowtie A^{*},\star,\partial+\dh^*,\frak B_{d})$ be a double construction of differential Frobenius algebras of weight $0$ associated to (A, $\cdot$, $\partial$) and $(A^*,\circ,\dh^*)$ given by Definition \mref{de:3.10}. Then $\dh(x)\cdot y=-\partial (x)\cdot y$ and $\partial^*(a^*)\circ b^*=-\dh^*(a^*)\circ b^*$ if and only if there is a Manin triple of noncommutative Novikov algebra $((A\bowtie A^{*},\prec_{A\bowtie A^{*}},\su_{A\bowtie A^{*}},\frak B_{d}), A, A^*)$, where binary operations $\prec_{A\bowtie A^{*}},\su_{A\bowtie A^{*}}$ are defined by
	\begin{align}
			(x+a^*)\prec_{A\bowtie A^{*}}(y+b^*)&=(x+a^*)\star (\partial+\dh^*)(y+b^*),\mlabel{eq:xb1}\\
	(x+a^*)\su_{A\bowtie A^{*}}(y+b^*)&=(\partial+\dh^*)(x+a^*)\star (y+b^*), \quad x,y \in A, a^*,b^*\in A^*.\mlabel{eq:xb2}		
\end{align}
\end{pro}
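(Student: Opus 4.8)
The plan is to reduce the claim to a single invariance identity for $\frakB_d$ and then to a multiplicativity property of $\partial+\dh$ coming from admissibility. First, the operations $\prec_{A\bowtie A^*},\su_{A\bowtie A^*}$ defined in \meqref{eq:xb1}--\meqref{eq:xb2} are exactly Gelfand's construction of Proposition~\mref{pro:kjl} applied to the differential algebra $(A\bowtie A^*,\star,\partial+\dh^*)$ of weight $0$; hence $(A\bowtie A^*,\prec_{A\bowtie A^*},\su_{A\bowtie A^*})$ is automatically a \nva. Since $(\partial+\dh^*)|_A=\partial$ and $(\partial+\dh^*)|_{A^*}=\dh^*$, the induced operations on $A$ and on $A^*$ are precisely the \nva structures produced from $(A,\cdot,\partial)$ by Proposition~\mref{pro:kjl} and from $(A^*,\circ,\dh^*)$ by Proposition~\mref{pro:5.9}, so $A$ and $A^*$ are subalgebras. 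Consequently the existence of the Manin triple $\big((A\bowtie A^*,\prec_{A\bowtie A^*},\su_{A\bowtie A^*},\frakB_d),A,A^*\big)$ is equivalent to the single requirement that $\frakB_d$ be invariant on $(A\bowtie A^*,\prec_{A\bowtie A^*},\su_{A\bowtie A^*})$ in the sense of \meqref{eq:inv}.

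Write $D:=\partial+\dh^*$ and let $\hat D$ be its adjoint with respect to $\frakB_d$, which equals $\dh+\partial^*$ by Lemma~\mref{lem:3.11}; set $E:=D+\hat D$. Using only the $\star$-invariance of $\frakB_d$ one gets $\frakB_d(u\prec_{A\bowtie A^*}v,w)=\frakB_d(u\star D(v),w)=\frakB_d(u,D(v)\star w)=\frakB_d(u,v\su_{A\bowtie A^*}w)$, so \meqref{eq:inv} is equivalent to the single relation $\frakB_d(u\prec_{A\bowtie A^*}v,w)=-\frakB_d(v,w\ob_{A\bowtie A^*}u)$. Expanding $w\ob_{A\bowtie A^*}u=w\star D(u)+D(w)\star u$, moving $\star$-factors across $\frakB_d$ by invariance, turning $D$ into $\hat D$ by $\frakB_d(D(a),b)=\frakB_d(a,\hat D(b))$, and using that $D$ is a derivation of weight $0$ (so $D(v\star w)=D(v)\star w+v\star D(w)$), this relation becomes, by nondegeneracy of $\frakB_d$, equivalent to $E(v\star w)=0$ for all $v,w\in A\bowtie A^*$. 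Thus everything reduces to showing that ``$E(v\star w)=0$ for all $v,w$'' is equivalent to the two displayed identities.

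By Corollary~\mref{cor:3.14}\meqref{it:3122}, the quadruple $(A\bowtie A^*,\star,\partial+\dh^*,\dh+\partial^*)=(A\bowtie A^*,\star,D,\hat D)$ is an admissible differential algebra of weight $0$, and by Corollary~\mref{cor:3.14}\meqref{it:3121} so are $(A,\cdot,\partial,\dh)$ and $(A^*,\circ,\dh^*,\partial^*)$. A short calculation from the weight-$0$ admissibility axioms \meqref{eq:pduqr}--\meqref{eq:pduql} shows that in any admissible differential algebra $(\mathcal A,\ast,\alpha,\beta)$ of weight $0$ the map $\alpha+\beta$ satisfies $(\alpha+\beta)(x\ast y)=(\alpha+\beta)(x)\ast y=x\ast(\alpha+\beta)(y)$. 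Applying this in $A\bowtie A^*$ gives $E(v\star w)=E(v)\star w=v\star E(w)$; applying it in $A$ gives $F(x\cdot y)=F(x)\cdot y=x\cdot F(y)$ with $F:=\partial+\dh=E|_A$; and applying it in $A^*$ gives $F^*(a^*\circ b^*)=F^*(a^*)\circ b^*=a^*\circ F^*(b^*)$, where $F^*=\dh^*+\partial^*=E|_{A^*}$ is the linear dual of $F$.

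Finally I verify the equivalence. If $E(v\star w)=0$ for all $v,w$, then taking $v,w\in A$ gives $0=E(x\star y)=F(x)\cdot y$, i.e.\ $\dh(x)\cdot y=-\partial(x)\cdot y$, and taking $v,w\in A^*$ gives $0=F^*(a^*)\circ b^*$, i.e.\ $\partial^*(a^*)\circ b^*=-\dh^*(a^*)\circ b^*$. For the converse, assume $F(x)\cdot y=0$ and $F^*(a^*)\circ b^*=0$ for all $x,y\in A$, $a^*,b^*\in A^*$; by bilinearity it is enough to show $E$ kills each of $x\star y$, $a^*\star b^*$, $x\star b^*$ and $a^*\star y$. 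The first two vanish at once. For a cross term, write $E(x\star b^*)=E(x)\star b^*=F(x)\star b^*$ and expand $\star$ via the matched-pair formula \meqref{eq:3.7} with the structure maps ${R_\cdot}^{*},{L_\cdot}^{*},{R_\circ}^{*},{L_\circ}^{*}$ of Theorem~\mref{thm:frob}: its $A^*$-component ${R_\cdot}^{*}(F(x))b^*$ pairs with any $z\in A$ to $\langle b^*,z\cdot F(x)\rangle=\langle b^*,F(z)\cdot x\rangle=0$ (using $z\cdot F(x)=F(z\cdot x)=F(z)\cdot x$), and its $A$-component ${L_\circ}^{*}(b^*)F(x)$ pairs with any $c^*\in A^*$ to $\langle F(x),b^*\circ c^*\rangle=\langle x,F^*(b^*\circ c^*)\rangle=\langle x,F^*(b^*)\circ c^*\rangle=0$; the term $a^*\star y$ is handled symmetrically. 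Together with the reductions above this produces the Manin triple and finishes the proof. I expect this last step, the vanishing of $E$ on the two cross products, to be the main obstacle, since it mixes the four matched-pair structure maps with the multiplicativity identities for $\partial+\dh$ on $A$ and for $\dh^*+\partial^*$ on $A^*$.
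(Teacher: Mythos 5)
Your proof is correct, and it reaches the paper's conclusion by the same overall strategy: use Proposition~\mref{pro:kjl} to get the noncommutative Novikov structure on $A\bowtie A^*$ with $A$ and $A^*$ as subalgebras, use the $\star$-invariance of $\frakB_d$ to get $\frakB_d(u\prec v,w)=\frakB_d(u,v\succ w)$ for free, and then reduce the remaining invariance identity, via the adjoint relation of Lemma~\mref{lem:3.11} and the weight-$0$ derivation property of $\partial+\dh^*$, to the vanishing of $E:=(\partial+\dh^*)+(\dh+\partial^*)$ on all products $u\star v$, checked on the four components of $\star$. Where you genuinely differ is in how that vanishing is related to the two displayed hypotheses: the paper argues directly from $\dh(x)\cdot y=-\partial(x)\cdot y$ and the admissibility identities \meqref{eq:pduqr}--\meqref{eq:pduqld} to get $\dh(x\cdot y)=-\partial(x\cdot y)$ and then computes the two cross components by explicit dual pairings, whereas you first extract the unconditional lemma that in any weight-$0$ admissible differential algebra $(\mathcal{A},\ast,\alpha,\beta)$ one has $(\alpha+\beta)(x\ast y)=(\alpha+\beta)(x)\ast y=x\ast(\alpha+\beta)(y)$, and apply it uniformly to $A$, $A^*$ and (via Corollary~\mref{cor:3.14}) to $A\bowtie A^*$. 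This modularization buys a transparent equivalence --- $E$ vanishes on products iff it vanishes on left factors from $A$ and from $A^*$, which is exactly the pair of stated conditions --- and makes both directions of the ``if and only if'' fall out of one chain of equivalences; the cost is that the cross-term dual-pairing computation still has to be done, just packaged as the statement $F(x)\star b^*=0$. Both arguments are complete and the computations agree.
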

	\begin{proof}
($\Rightarrow$) Firstly, since $(A\bowtie A^{*},\star,\partial+\dh^*)$ is a differential algebra of weight $0$, by Proposition \mref{pro:kjl}, $(A\bowtie A^{*},\prec_{A\bowtie A^{*}},\su_{A\bowtie A^{*}})$ is a noncommutative Novikov algebra. 
Secondly, by Definition \mref{de:3.10}, $\frak B_{d}$ is invariant on $(A\bowtie A^*,\star)$. Then we get
	\begin{align*}
		\frakB_d((x+a^*)\prec_{A\bowtie A^{*}}(y+b^*),z+c^*)&=\frakB_d((x+a^*)\star(\partial+\dh^*)(y+b^*),z+c^*)\\&=\frakB_d(x+a^*,(\partial+\dh^*)(y+b^*)\star (z+c^*))\\&=	\frakB_d((x+a^*),(y+b^*)\su_{A\bowtie A^{*}}(z+c^*)).
		\end{align*}
 By Lemma \mref{lem:3.11}, $\widehat{\partial+\dh^*}$ = $\dh+\partial^*$. By Corollary~\mref{cor:3.13},  $(A,\cdot,\Delta,\partial,\dh)$ is a  \dASIb of weight $0$. Then $(A,\cdot,\partial,\dh)$ is an admissible differential algebra and $(A,\Delta,\dh,\partial)$ is an admissible differential coalgebra. So by Eqs. ~\meqref{eq:pduqr}--\meqref{eq:pduql} and $\dh(x)\cdot y=-\pa(x)\cdot y$, we have $x\cdot \dh(y)=-x\cdot\pa(y)$, and thus $\dh(x\cdot y)=-\partial (x\cdot y)$.
By Eqs.~\meqref{eq:pduqrd}--\meqref{eq:pduqld} and $\partial^*(a^*)\circ b^*=-\dh^*(a^*)\circ b^*$, we get $a^*\circ \pa^*(b^*)=-a^*\circ \dh^*(b^*)$, and hence $\partial^*(a^*\circ b^*)=-\dh^*(a^*\circ b^*)$.
Then we get
\begin{eqnarray*}
\langle\dh(L^*_\circ(b^*)x+R^*_\circ(a^*) y),c^*\rangle&=& \langle x, b^*\circ \dh^*(c^*)\rangle+\langle y, \dh^*(c^*)\circ a^*\rangle\\
&=&- \langle x, b^*\circ \pa^*(c^*)\rangle-\langle y, \pa^*(c^*)\circ a^*\rangle\\
&=&\langle-\pa(L^*_\circ(b^*)x+R^*_\circ(a^*) y),c^*\rangle.\\
\langle \pa^*(R^*_\cdot(x)b^*+L^*_\cdot(y)a^*), z\rangle&=&\langle \pa^*(R^*_\cdot(x)b^*+L^*_\cdot(y)a^*), z\rangle\\
&=&\langle b^*, \pa(z)\cdot x\rangle+\langle a^*,y\cdot \pa(z)\rangle\\
&=&\langle b^*, -\dh(z)\cdot x\rangle-\langle a^*,y\cdot \dh(z)\rangle\\
&=&\langle -\dh^*(R^*_\cdot(x)b^*+L^*_\cdot(y)a^*), z\rangle.
\end{eqnarray*}
This gives $$\dh(L^*_\circ(b^*)x+R^*_\circ(a^*) y)=-\pa(L^*_\circ(b^*)x+R^*_\circ(a^*) y),\,\pa^*(R^*_\cdot(x)b^*+L^*_\cdot(y)a^*)=- \dh^*(R^*_\cdot(x)b^*+L^*_\cdot(y)a^*).$$
Thus, we obtain
\begin{eqnarray*}
(\dh+\partial^*)((x+a^*)\star(y+b^*))&=&(\dh+\pa^*)(x\cdot y+L^*_\circ (b^*) x+R^*_\circ(a^*) y+a^*\circ b^*+R^*_\cdot(x)b^*+L^*_\cdot(y)a^*)\\
&=&\dh(x\cdot y+L^*_\circ (b^*) x+R^*_\circ(a^*))+\pa^*(a^*\circ b^*+R^*_\cdot(x)b^*+L^*_\cdot(y)a^*)\\
&=&-\pa((x\cdot y+L^*_\circ (b^*) x+R^*_\circ(a^*))-\dh^*(a^*\circ b^*+R^*_\cdot(x)b^*+L^*_\cdot(y)a^*)\\
&=&-(\pa+\dh^*)((x+a^*)\star(y+b^*)).
\end{eqnarray*}
Hence, we have
	\begin{eqnarray*}
		&&-\frakB_d(y+b^*,(z+c^*)\ob_{A\bowtie A^{*}}(x+a^*))\\
&=&-\frakB_d(y+b^*,(z+c^*)\star (\partial+\dh^*) (x+a^*)+(\partial+\dh^*)(z+c^*)\star(x+a^*))\\&=&-\frakB_d(z+c^*,(\partial+\dh^*) (x+a^*)\star(y+b^*) )-\frakB_d(z+c^*,\widehat{(\partial+\dh^*)}((x+a^*)\star(y+b^*)))\\
&=&	-\frakB_d(z+c^*,(\partial+\dh^*) (x+a^*)\star(y+b^*)+(\dh+\partial^*)((x+a^*)\star(y+b^*)))\\&=&-\frakB_d(z+c^*,(\partial+\dh^*) (x+a^*)\star(y+b^*)-(\partial+\dh^*)((x+a^*)\star(y+b^*)))\\&=&-\frakB_d(z+c^*,-(x+a^*)\star(\partial+\dh^*) (y+b^*))\\&=&\frakB_d((x+a^*)\prec_{A\bowtie A^{*}}(y+b^*),z+c^*).
		\end{eqnarray*}
So we obtain 
\begin{align*}\frakB_d((x+a^*)\prec_{A\bowtie A^{*}}(y+b^*),z+c^*)&=\frakB_d((x+a^*),(y+b^*)\su_{A\bowtie A^{*}}(z+c^*))\\
&=-\frakB_d(y+b^*,(z+c^*)\ob_{A\bowtie A^{*}}(x+a^*)),
\end{align*}
proving $\frakB_d$ is invariant on $(A\bowtie A^*, \prec_{A\bowtie A^{*}},\su_{A\bowtie A^{*}})$. 
 Thus, $((A\bowtie A^{*},\prec_{A\bowtie A^{*}},\su_{A\bowtie A^{*}},\frak B_{d}), A, A^*$) is a Manin triple of noncommutative Novikov algebras.
\smallskip

\noindent
($\Leftarrow$) By $\frakB_d((x+a^*)\prec_{A\bowtie A^{*}}(y+b^*),z+c^*)=-\frakB_d(y+b^*,(z+c^*)\ob_{A\bowtie A^{*}}(x+a^*))$ and
the above proof, we obtain
$$-(\partial+\dh^*) (x+a^*)\star(y+b^*)-(\dh+\partial^*)((x+a^*)\star(y+b^*))=(x+a^*)\star(\partial+\dh^*) (y+b^*),$$
which leads to $-\pa(x)\cdot y-\dh(x\cdot y)=x\cdot \pa(y)$ and $-\dh^*(a^*)\circ b^*-\pa^*(a^*\circ b^*)=a^*\circ \dh^*(b^*)$.
So by Eqs. ~\meqref{eq:pduqr} and~\meqref{eq:pduqrd}, we get $-\pa(x)\cdot y=\dh(x)\cdot y$ and $-\dh^*(a^*)\circ b^*=\pa^*(a^*)\circ b^*$.
	\end{proof}

With the preparatory work above, we now give the main result of this section.
\begin{thm}\mlabel{thm:main}
Let $(A,\cdot,\Delta, \partial, \dh)$ be a differential \asi bialgebra of weight $0$. Let $\prec_A,\succ_A$  and $\prec_{A^{*}},\su_{A^{*}}$ be given by  Eqs.~\meqref{eq:12} and \meqref{eq:11}, respectively. Let
$\Delta_{\prec_{A}},\Delta_{\su_{A}}$ be the linear duals of $\prec_{A^{*}},\su_{A^{*}}$. If $\dh(x)\cdot y=-\partial (x)\cdot y$
 and $(\dh \ot \id )\Delta =-(\partial \ot \id)\Delta$, then $(A,\prec_A,\succ_A,\Delta_{\prec_{A}},\Delta_{\su_{A}})$ is a \nvbia. In particular, if $\dh=-\pa$, then $(A,\prec_A,\succ_A,\Delta_{\prec_{A}},\Delta_{\su_{A}})$ is  a \nvbia.
\end{thm}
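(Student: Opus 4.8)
The plan is to realize $(A,\prec_A,\succ_A,\Delta_{\prec_A},\Delta_{\su_A})$ as the \nvbia attached to a Manin triple of \nvas, the Manin triple itself being produced from the given differential \asi bialgebra through the double construction of differential Frobenius algebras.

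First I would apply Corollary~\mref{cor:3.13}: since $(A,\cdot,\Delta,\partial,\dh)$ is a differential \asi bialgebra of weight $0$, and hence $(A^*,\circ,\dh^*)$ (with $\circ$ the linear dual of $\Delta$) is a differential algebra of weight $0$, the quadruple $(A\bowtie A^*,\star,\partial+\dh^*,\frakB_d)$ is a double construction of differential Frobenius algebras of weight $0$ associated to $(A,\cdot,\partial)$ and $(A^*,\circ,\dh^*)$. By Proposition~\mref{pro:kjl} the operations of Eq.~\meqref{eq:11} make $(A,\prec_A,\su_A)$ a \nva, and by Proposition~\mref{pro:5.9} the operations of Eq.~\meqref{eq:12} make $(A^*,\prec_{A^*},\su_{A^*})$ a \nva for which $(A,\Delta_{\prec_A},\Delta_{\su_A})$ is the dual \nvca.

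Next I would check the two hypotheses of Proposition~\mref{pro:ll}. The identity $\dh(x)\cdot y=-\partial(x)\cdot y$ is assumed directly. For the other one, for all $x\in A$ and $a^*,b^*\in A^*$ we have
\begin{align*}
\langle\partial^*(a^*)\circ b^*,x\rangle&=\langle a^*\ot b^*,(\partial\ot\id)\Delta(x)\rangle,\\
\langle\dh^*(a^*)\circ b^*,x\rangle&=\langle a^*\ot b^*,(\dh\ot\id)\Delta(x)\rangle,
\end{align*}
so the hypothesis $(\dh\ot\id)\Delta=-(\partial\ot\id)\Delta$ is equivalent to $\partial^*(a^*)\circ b^*=-\dh^*(a^*)\circ b^*$ for all $a^*,b^*$. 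Hence Proposition~\mref{pro:ll} yields a Manin triple of \nvas $\big((A\bowtie A^*,\prec_{A\bowtie A^*},\su_{A\bowtie A^*},\frakB_d),A,A^*\big)$ whose products are given by Eqs.~\meqref{eq:xb1}--\meqref{eq:xb2}. Restricting these formulas to $A$ and to $A^*$ and using that $\star$ restricts to $\cdot$ on $A$ and to $\circ$ on $A^*$, one sees that the induced subalgebra structures are precisely $(A,\prec_A,\su_A)$ of Eq.~\meqref{eq:11} and $(A^*,\prec_{A^*},\su_{A^*})$ of Eq.~\meqref{eq:12}, and that $\frakB_d$ is the pairing of Eq.~\meqref{eq:sn}.

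Finally I would invoke the equivalence (a)$\Leftrightarrow$(c) of Theorem~\mref{thm:mpmt}, which converts this Manin triple into the assertion that $(A,\prec_A,\su_A,\Delta_{\prec_A},\Delta_{\su_A})$ is a \nvbia, proving the first statement. For the last sentence, $\dh=-\partial$ makes both hypotheses automatic, so the general case applies. The one step that requires genuine care is the duality translation between the tensor identity $(\dh\ot\id)\Delta=-(\partial\ot\id)\Delta$ and the dual-algebra identity appearing in Proposition~\mref{pro:ll}, together with the identification of the restricted structures inside the Manin triple; everything else is a direct appeal to results already established.
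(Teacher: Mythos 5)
Your proposal is correct and follows essentially the same route as the paper's own proof: Corollary~\ref{cor:3.13} to get the double construction, the duality translation of $(\dh\ot\id)\Delta=-(\partial\ot\id)\Delta$ into $\partial^*(a^*)\circ b^*=-\dh^*(a^*)\circ b^*$, Proposition~\ref{pro:ll} to produce the Manin triple, and Theorem~\ref{thm:mpmt} to convert it into a noncommutative Novikov bialgebra. Your extra care with the dual-pairing computation and the identification of the restricted subalgebra structures only makes explicit what the paper leaves implicit.
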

\begin{proof}
	By Corollary~\mref{cor:3.13}, $\big(A\bowtie A^*, \star, \pa+ \dh^*, \frakB_d\big)$ is a double construction of differential Frobenius algebras of weight $0$. Let $\circ$ be the linear dual of $\Delta$. Then  $(\dh \ot \id )\Delta =-(\partial \ot \id)\Delta$ is equivalent to  $\partial^*(a^*)\circ b^*=-\dh^*(a^*)\circ b^*$. Then by Proposition \mref{pro:ll}, there is a Manin triple of noncommutative Novikov algebras  $\big( (A\bowtie A^{*},\prec_{A\bowtie A^{*}},\su_{A\bowtie A^{*}},\frak B_{d})$, $(A,\prec_{A},\su_{A}), (A^*,\prec_{A^{*}},\su_{A^{*}})\big)$, where $\prec_{A\bowtie A^{*}}, \su_{A\bowtie A^{*}}$ are defined by Eqs.~\meqref{eq:xb1}--\meqref{eq:xb2}. Moreover, by Theorem \mref{thm:mpmt}, $(A,\prec_A,\succ_A,\Delta_{\prec_{A}},\Delta_{\su_{A}})$ is a \nvbia.
	\end{proof}

\noindent {\bf Acknowledgments}: This work is supported by the NNSFC (12461002,12326324), the Jiangxi Provincial Natural Science Foundation (20224BAB201003),
the Innovation Fund Designated for Graduate Students of Jiangxi Province(YC2024-S238).
The first author thanks the Chern Institute of Mathematics at Nankai University for hospitality.

\noindent
{\bf Declaration of interests. } The authors have no conflicts of interest to disclose.

\noindent
{\bf Data availability. } Data sharing is not applicable as no data were created or analyzed.

\vspace{-.2cm}

\end{document}